\newtheorem{theorem}{Theorem}[section]
\newtheorem{lemma}[theorem]{Lemma}
\newtheorem{definition}[theorem]{Definition}
\theoremstyle{corollary}
\newtheorem{corollary}[theorem]{Corollary}
\theoremstyle{conjecture}
\newtheorem{conjecture}[theorem]{Conjecture}
\theoremstyle{assumption}
\theoremstyle{proposition}
\newtheorem{proposition}[theorem]{Proposition}
\theoremstyle{remark}
\newtheorem{remark}[theorem]{Remark}
\numberwithin{equation}{section}
\newcommand{\ii}{\ensuremath{\sqrt{-1}}}
\newcommand{\pp}{\bar\partial}
\newcommand{\p}{\partial}
\DeclareMathOperator{\ric}{Ric}
\DeclareMathOperator{\nef}{nef}
\DeclareMathOperator{\BC}{BC}
\DeclareMathOperator{\supp}{supp}
\DeclareMathOperator{\Exc}{Exc}
\DeclareMathOperator{\nd}{nd}
\newcommand*{\da@rightarrow}{\mathchar"0\hexnumber@\symAMSa 4B }
\newcommand*{\da@leftarrow}{\mathchar"0\hexnumber@\symAMSa 4C }
\newcommand*{\xdashrightarrow}[2][]{%
  \mathrel{%
    \mathpalette{\da@xarrow{#1}{#2}{}\da@rightarrow{\,}{}}{}%
  }%
}
\newcommand{\xdashleftarrow}[2][]{%
  \mathrel{%
    \mathpalette{\da@xarrow{#1}{#2}\da@leftarrow{}{}{\,}}{}%
  }%
}
\newcommand*{\da@xarrow}[7]{%
  % #1: below
  % #2: above
  % #3: arrow left
  % #4: arrow right
  % #5: space left 
  % #6: space right
  % #7: math style 
  \sbox0{$\ifx#7\scriptstyle\scriptscriptstyle\else\scriptstyle\fi#5#1#6\m@th$}%
  \sbox2{$\ifx#7\scriptstyle\scriptscriptstyle\else\scriptstyle\fi#5#2#6\m@th$}%
  \sbox4{$#7\dabar@\m@th$}%
  \dimen@=\wd0 %
  \ifdim\wd2 >\dimen@
    \dimen@=\wd2 %   
  \fi
  \count@=2 %
  \def\da@bars{\dabar@\dabar@}%
  \@whiledim\count@\wd4<\dimen@\do{%
    \advance\count@\@ne
    \expandafter\def\expandafter\da@bars\expandafter{%
      \da@bars
      \dabar@ 
    }%
  }%  
  \mathrel{#3}%
  \mathrel{%   
    \mathop{\da@bars}\limits
    \ifx\\#1\\%
    \else
      _{\copy0}%
    \fi
    \ifx\\#2\\%
    \else
      ^{\copy2}%
    \fi
  }%   
  \mathrel{#4}%
}
\begin{document}

\title[Weak base-point freeness and diameter lower bound]{Weak transcendental base-point freeness and diameter lower bounds for the K\"ahler-Ricci flow}

\author{Junsheng Zhang}
\address{Courant Institute of Mathematical Sciences\\
  New York University, 251 Mercer St\\
  New York, NY 10012\\}
\curraddr{}
\email{jz7561@nyu.edu}

\begin{abstract}
We prove a weaker version of the transcendental base-point freeness  on compact K\"ahler manifolds. As a consequence, we derive the diameter lower bound for finite time singularities of K\"ahler-Ricci flow with non-Fano initial data.
 \end{abstract}

\maketitle

\section{Introduction}
Let $(X,\omega_0)$ be a compact K\"ahler manifold. 
We consider the K\"ahler–Ricci flow on $X$, which develops a finite-time singularity and, after rescaling, we may assume occurs at time $1$, that is, on $X \times [0,1)$ we have
\begin{equation}\label{eq--ric flow}
\begin{cases}
\partial_t \omega = -\operatorname{Ric}(\omega), \\[6pt]
\omega|_{t=0} = \omega_0.
\end{cases}
\end{equation}
By the work in \cite{tian-zhang2006,tsuji1988}, we know that  \eqref{eq--ric flow} develops a  singularity at time 1 if and only if 
\begin{equation}
\mathrm{nef}_X(\omega_0):=\sup\{t>0\mid \omega_0+tK_X \text{ is a K\"ahler class}\}=1.
\end{equation}

Throughout this paper, we say that $(X, \omega_0)$ is Fano, if $X$ is a Fano manifold and $\omega_0\in c_1(X)$. 
Our main result establishes the following diameter lower bound for the K\"ahler–Ricci flow starting from non-Fano initial data. 
\begin{theorem}\label{thm--main}
    Suppose $(X, \omega_0)$ is not Fano and $\nef_X(\omega_0)=1$. Let $\omega_t$ be the solution of the K\"ahler-Ricci-flow \eqref{eq--ric flow}.
    Then
    \begin{equation}
        \liminf_{t\nearrow 1}\operatorname{diam}(X, \omega_t) >0.
    \end{equation}
   
\end{theorem}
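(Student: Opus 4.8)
We argue by contradiction with the non-Fano hypothesis. Set $\alpha:=[\omega_0]+K_X\in H^{1,1}(X,\mathbb{R})$. Since $\nef_X(\omega_0)=1$, the classes $[\omega_0]+tK_X$ are K\"ahler for $t\in[0,1)$ and converge to $\alpha$ as $t\nearrow 1$, so $\alpha$ is nef; moreover $[\omega_t]=(1-t)[\omega_0]+t\alpha$ for all $t\in[0,1)$. The key reduction is that the non-Fano hypothesis is equivalent to $\alpha\neq 0$: if $\alpha=0$, then the K\"ahler class $[\omega_0]$ equals $-K_X=c_1(X)$, so $c_1(X)$ is K\"ahler, i.e.\ $X$ is Fano and $\omega_0\in c_1(X)$; conversely, if $(X,\omega_0)$ is Fano then $\omega_0\in c_1(X)=-K_X$, whence $\alpha=0$. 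Thus it suffices to show: if $\alpha\neq 0$, there is a constant $c>0$ with $\diam(X,\omega_t)\ge c$ for all $t\in[0,1)$.

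The plan is to use the weak transcendental base-point freeness (the first main result of the paper) to produce a non-constant holomorphic map out of $X$ along which $\alpha$ remains positive, and then to show via a parabolic Schwarz estimate that this map is uniformly Lipschitz for the flow metrics $\omega_t$. The hypothesis of the base-point-free statement is met: for $a=2$,
\[
2\alpha-K_X=2\big([\omega_0]+K_X\big)-K_X=[\omega_0]+\alpha
\]
is the sum of a K\"ahler class and a nef class, hence K\"ahler (in particular big and nef). Consequently weak transcendental base-point freeness furnishes a positive-dimensional compact K\"ahler manifold $(Y,\omega_Y)$, a non-constant holomorphic map $f\colon X\to Y$, and $\lambda>0$ with $\alpha-\lambda\, f^*[\omega_Y]$ nef. (If such data is only available after a bimeromorphic modification of $X$, or with $Y$ singular, or with $f$ regular only off a proper analytic subset, one reduces to the stated situation by resolving singularities and indeterminacies and using that the relevant exceptional loci have real codimension at least $2$; I suppress this.) Put $\theta:=\lambda\, f^*\omega_Y$, a smooth closed semipositive $(1,1)$-form on $X$ with $\theta\not\equiv 0$; since $f$ is non-constant, $f(X)$ is a positive-dimensional compact subvariety of $Y$, so $D:=\diam\big(f(X),\omega_Y\big)>0$. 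Finally, for every $t\in[0,1)$,
\[
[\omega_t]-\tfrac{t}{2}[\theta]=(1-t)[\omega_0]+\tfrac{t}{2}\,\alpha+\tfrac{t}{2}\big(\alpha-\lambda\, f^*[\omega_Y]\big)
\]
is a sum of a K\"ahler class and two nef classes, hence K\"ahler; fix a smooth family of K\"ahler forms $\beta_t$ in these classes and set $\hat\omega_t:=\tfrac{t}{2}\theta+\beta_t\in[\omega_t]$, so that $\hat\omega_t\ge\tfrac{t}{2}\theta$ and $\hat\omega_t>0$.

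The heart of the matter is the a priori estimate $\operatorname{tr}_{\omega_t}\theta\le C$ for all $t\in[0,1)$, with $C$ depending only on $X$, $\omega_0$ and $\theta$. To prove it, one writes $\omega_t=\hat\omega_t+i\partial\bar\partial\varphi_t$ and applies the parabolic Schwarz lemma to $f\colon(X,\omega_t)\to(Y,\lambda^{-1}\omega_Y)$: since the holomorphic bisectional curvature of $\omega_Y$ is bounded above, one obtains $(\partial_t-\Delta_{\omega_t})\log\operatorname{tr}_{\omega_t}\theta\le C_0\operatorname{tr}_{\omega_t}\theta$ (the Ricci term created by differentiating $\omega_t^{-1}$ in $t$ cancels the Ricci term from the Laplacian---the advantage of the parabolic version). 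One then runs the maximum principle on $\log\operatorname{tr}_{\omega_t}\theta-A\varphi_t$ over $X\times[\tfrac12,1)$ with $A$ large: using $\Delta_{\omega_t}\varphi_t=n-\operatorname{tr}_{\omega_t}\hat\omega_t$ together with $\hat\omega_t\ge\tfrac{t}{2}\theta$, the bad term $C_0\operatorname{tr}_{\omega_t}\theta$ is dominated (for $A$ large and $t$ close to $1$) by the good term $-A\operatorname{tr}_{\omega_t}\hat\omega_t\le-\tfrac{At}{2}\operatorname{tr}_{\omega_t}\theta$, while the remaining contributions are controlled using the standard a priori bounds for the K\"ahler--Ricci flow up to a finite-time singularity ($\sup_X\varphi_t$ and $\partial_t\varphi_t$ uniformly bounded above, $\partial_t\varphi_t$ uniformly bounded below). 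This yields $\operatorname{tr}_{\omega_t}\theta\le C$. Since $\theta\ge 0$, its eigenvalues relative to $\omega_t$ are nonnegative, so this bound forces the pointwise inequality $\theta\le C\,\omega_t$, i.e.\ $f^*\omega_Y\le(C/\lambda)\,\omega_t$; hence $f\colon(X,\omega_t)\to(Y,\omega_Y)$ is $\sqrt{C/\lambda}$-Lipschitz, and therefore $\diam(X,\omega_t)\ge\sqrt{\lambda/C}\;\diam\big(f(X),\omega_Y\big)=\sqrt{\lambda/C}\,D>0$ for all $t\in[0,1)$. In particular $\liminf_{t\nearrow 1}\diam(X,\omega_t)>0$, as claimed.

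The deepest ingredient is the weak transcendental base-point freeness itself, which supplies the map $f$ and the numerical inequality $\alpha\ge\lambda\, f^*[\omega_Y]$ without any algebraicity available; it is used here as a black box. Granting it, the principal obstacle within the proof above is the uniform trace estimate $\operatorname{tr}_{\omega_t}\theta\le C$ \emph{up to the singular time}: the comparison form $\theta$ is only semipositive (so one must track its degeneracy directions), and in the collapsing regime the potentials $\varphi_t$ may fail to be bounded below, so the maximum-principle argument must be run with care---choosing the reference forms $\hat\omega_t$ appropriately, normalizing $\varphi_t$ by its supremum, and checking that the quantities blowing up as $t\nearrow 1$ enter with a favorable sign---along the lines developed for the K\"ahler--Ricci flow with collapsing canonical limits. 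The reductions handling a singular $Y$ or an only meromorphic $f$ are comparatively routine.
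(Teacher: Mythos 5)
Your overall strategy---weak transcendental base-point freeness to produce a fibration structure, then a parabolic Schwarz lemma to make the map to the base uniformly Lipschitz for $\omega_t$, then the diameter of the base---is exactly the paper's. However, there are two genuine gaps, both located precisely in the steps you suppress or defer.

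First, the input you assume from weak base-point freeness is stronger than what Theorem~\ref{thm-weak basepointfree} provides. You posit a holomorphic $f:X\to Y$ with $\alpha-\lambda f^*[\omega_Y]$ \emph{nef}, which is what lets you build reference forms $\hat\omega_t\ge \tfrac{t}{2}\theta$ in the class $[\omega_t]$ and run a clean maximum principle. The actual theorem only gives $X\xlongleftarrow{\mu}Z\xlongrightarrow{f}Y$ with $\mu^*(K_X+\omega_0)=f^*(\alpha_1)+D$, where $D$ is an \emph{effective} $\mathbb{R}$-divisor satisfying $E+D=f^*(D_Y)$; an effective divisor is not nef, and $D$ is not $\mu$-exceptional in general (it contains $f^*(D_Y)$ up to exceptional corrections), so it cannot be discarded by descending to $X$. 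Consequently the inequality $\omega_t\gtrsim f^*\theta_1$ is provably false near $\supp(f^*D_Y)$ in general, and the correct statement (Theorem~\ref{thm-generalized schwarz}) is the degenerate bound $\tr_{\mu^*(\omega_t)}(f^*\theta_1)\le C|s_{f^*D_Y}|^{-C}$. The diameter bound is then rescued not by global Lipschitz control but by restricting to $f^{-1}(U)$ for a ball $U\subset Y$ disjoint from $\supp(D_Y)$. Your reduction ``by resolving indeterminacies, exceptional loci have codimension $\ge 2$'' does not address this: the relevant degeneration locus is the divisor $f^*(D_Y)$, and moreover $\Exc(\mu)$ is itself a divisor since $X$ is smooth, so $\mu^*(\omega_t)$ degenerates along a codimension-one set and one must separately verify (as the paper does, via the factorization of $f$ through $\mu$ away from $f^{-1}(\supp D_Y)$ and two further blow-ups) that $\tr_{\mu^*(\omega_t)}(f^*\theta_1)$ is even finite there.

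Second, the ``standard a priori bounds'' you invoke---in particular a uniform lower bound for $\partial_t\varphi_t$---fail in the volume-collapsing regime relevant here: $\partial_t\varphi=\log(\omega_t^n/\Omega)\to-\infty$ when the total volume tends to zero. Likewise $\varphi_t$ is not known to be uniformly bounded below; the paper only has the degenerate bound $\varphi_t\ge t\eta-C$ for an $\alpha$-psh $\eta$ with logarithmic poles along $D$ (Lemma~\ref{lem--C0 estimate}), and the improved version $\phi_t\ge(1-\epsilon_0)t\log|s_D|_h^2-C$ obtained from the divisor relation $D+F=f^*(D_Y)$. The bad term $-A\partial_t\varphi_t$ in the maximum principle is then absorbed not by a lower bound on $\partial_t\varphi_t$ but by pairing it with $-A(1-t)\tr_{\omega_t}\omega_0$ and using that $x\mapsto x^{1/n}-\log x$ is bounded below, together with the weight $t\log|s_D|^2$ built into the test quantity $Q$. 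You flag both difficulties in your closing paragraph, but the proof as written relies on the unavailable uniform bounds, so the argument does not close without the degenerate machinery.
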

Note that if \( (X, \omega_0) \) is Fano, then by Perelman’s work (see \cite{sesumtian}), we have
\[
\operatorname{diam}(X, \omega_t) \le C \sqrt{1 - t}.
\]
Therefore, Theorem~\ref{thm--main} is equivalent to the statement that for a Kähler–Ricci flow developing a finite-time singularity at \( t = 1 \), \( (X, \omega_0) \) is Fano if and only if the flow becomes extinct at time 1, that is,
\[
\operatorname{diam}(X, \omega_t) \to 0 \quad \text{as } t \nearrow 1.
\]
This resolves the conjectures proposed in \cite[Conjecture~1.2]{song2014finite} and \cite[Conjecture~1.3]{tosatti2-zhang}, which are related to \cite[Conjecture~4.4]{tian2008new}.

It is known from the works \cite{tosatti2-zhang} (see also \cite{songtian2007, song2014finite}) that, by a Schwarz lemma–type argument, the diameter lower bound would follow from the folklore \emph{transcendental base-point-freeness} conjecture; see, for instance, \cite[Conjecture~1.2]{tosatti2-zhang}, \cite[Conjecture~1.2]{filip-tosatti}, and Conjecture~\ref{conj--basepointfree} in this paper.
The diameter lower bound has been established in the following cases:
\begin{itemize}
    \item when $X$ is projective and  $[\omega_0] \in H^2(X,\mathbb{Q})$, by \cite{song2014finite};
    \item when $\dim X \leq 3$, by \cite{tosatti2-zhang};
    \item when $X$ is projective, by \cite{das-hacon2024}.
\end{itemize}
Our approach follows the same general outline, but we must overcome the difficulty that the general transcendental base-point-freeness conjecture remains out of reach.  
Indeed, we establish the following weaker bimeromorphic version of the transcendental base-point-freeness.
\begin{theorem}\label{thm-weak basepointfree}
    Let $X$ be a compact K\"ahler manifold. Suppose $\alpha$ is a big and nef class and $K_X+\alpha$ is pseudo-effective. Then there exists projective fibration $X\xlongleftarrow{\mu} Z\xlongrightarrow{f}Y$ between compact K\"ahler manifolds such that $\mu$ is a composition of blow-ups along smooth holomorphic submanifolds,  general fibers of $f$ are rationally connected and
         \begin{equation}
             \mu^*(K_X+\alpha)=f^*(\alpha_1)+D,
         \end{equation} where $ \alpha_1$ is a K\"ahler class and $D$ is an effective $\mathbb R$-divisor.
      Moreover if $K_X+\alpha$ is nef, then there exists an effective $\mathbb Q$-divisor $D_Y$ on $Y$ and an effective $\mu$-exceptional $\mathbb R$-divisor $E$ such that 
      \begin{equation}
          E+D=f^*(D_Y).
      \end{equation}
\end{theorem}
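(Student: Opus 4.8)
The plan is to induct on $\dim X$, establishing the following more flexible statement: for a klt pair $(X,B)$ and a Kähler class $\omega$ with $K_X+B+\omega$ pseudo-effective, there is a modification as in the theorem with $\mu^*(K_X+B+\omega)=f^*\alpha_1+D$, together with the refined conclusion when $K_X+B+\omega$ is nef. The original statement is recovered as follows: since $\alpha$ is big and nef, I would first blow up the null locus of $\alpha$ and pass to a log resolution — these blow-ups are absorbed into $\mu$ — to write $\alpha=\omega+\Gamma$ with $\omega$ Kähler and $\Gamma$ effective with simple normal crossings support, and then, absorbing the relative canonical class, $\mu^*(K_X+\alpha)=K_Z+B+\omega$ for a klt pair $(Z,B)$ polarized by the Kähler class $\omega$, $B$ having simple normal crossings support; the point that the only irrational components of $B$ come from $\Gamma$ and are therefore $\mu$-exceptional over $X$ will be used at the end. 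Every effective divisor produced below is folded into the final $D$, so it suffices to treat $(Z,B,\omega)$.

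Suppose first that $K_Z+B+\omega$ is big (this includes the case $K_Z+B$ pseudo-effective, since pseudo-effective plus Kähler is big). As $(Z,B)$ is klt and $\omega$ is Kähler, the minimal model program for this big generalized pair terminates — the Kähler analogue of the BCHM machinery, which is most accessible precisely in the big case — producing a minimal model on which the transform $\theta$ of $K_Z+B+\omega$ is nef and big. By the base-point-free theorem for nef and big klt classes on compact Kähler manifolds, $\theta$ is semiample and defines a birational contraction onto a normal projective variety $Y$ with klt singularities, with $\theta$ the pull-back of an ample class. On a common resolution $W$ one has $p^*(K_Z+B+\omega)=q^*\theta+F$ with $F\ge 0$, the standard minimal-model comparison; resolving $Y$ and moving the new exceptional divisors into $F$, we get the conclusion with $f$ birational, $\alpha_1$ a Kähler class on the resolution of $Y$, and $D=F$. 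When $K_X+\alpha$ is itself nef the minimal model step is trivial, $\mu^*(K_X+\alpha)$ is already the pull-back of the ample class on $Y$, $D$ becomes the pull-back of the (rational) discrepancy divisor of the resolution of $Y$, and the ``moreover'' is immediate with $E=0$.

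Suppose instead that $K_Z+B+\omega$ is not big. Then $K_Z+B$ is not pseudo-effective, hence neither is $K_Z$, so $Z$ is uniruled; pass to a model $h\colon Z'\to Y_0$ of the maximal rationally connected fibration, with rationally connected general fibre $G$, $Y_0$ not uniruled, and $\dim G<\dim Z$. Restricting to a general $G$ and using $K_{Z'}|_G=K_G$, the class $K_G+B'|_G+\omega'|_G$ is pseudo-effective on $G$ — restriction of a pseudo-effective class to a general fibre, via slicing of closed positive currents — the pair $(G,B'|_G)$ is klt, and $\omega'|_G$ is a Kähler class, so the inductive hypothesis applies on $G$ and yields a modification of $G$ carrying a rationally connected fibration along which $K_G+B'|_G+\omega'|_G$ is the pull-back of a Kähler class plus an effective divisor. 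Spreading this out over $Y_0$, folding all modifications into $\mu$, and using that a projective fibration from a compact Kähler manifold has Kähler base while a relatively Kähler class becomes Kähler after adding the pull-back of a Kähler class from the base, we obtain the projective fibration $X\xleftarrow{\mu}Z\xrightarrow{f}Y$ with rationally connected general fibres ($Y$ being itself fibred in rationally connected varieties over $Y_0$) and $\mu^*(K_X+\alpha)=f^*\alpha_1+D$ with $\alpha_1$ Kähler and $D$ effective, effectivity forced by pseudo-effectivity of $K_X+\alpha$. If $K_X+\alpha$ is moreover nef, then its pull-back restricts on a general fibre $F$ of $f$ to a numerically trivial class — by the nef output of the inductive hypothesis — hence, $F$ being rationally connected, to an $\mathbb R$-trivial class; so $D|_F=0$, i.e.\ $D$ is $f$-vertical, and $(Z,B,\omega)\to Y$ is a generalized lc-trivial fibration. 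The canonical bundle formula then gives $\mu^*(K_X+\alpha)=K_Z+B+\omega\sim_{\mathbb R}f^*(K_Y+B_Y+M_Y)$ with $B_Y$ the discriminant and $M_Y$ nef; writing the vertical effective divisor $D$ over each prime divisor of $Y$ and completing each fibre-component to a genuine pull-back by adding $\mu$-exceptional divisors — which is also where the irrational (hence $\mu$-exceptional) components of $B$ are absorbed — produces an effective $\mathbb Q$-divisor $D_Y$ and an effective $\mu$-exceptional $\mathbb R$-divisor $E$ with $E+D=f^*D_Y$.

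The hard part is having at one's disposal, in arbitrary dimension and over a possibly non-projective compact Kähler base, the termination of the minimal model program for big Kähler-polarized klt pairs and the base-point-free theorem producing the contraction onto $Y$; these are substantial, non-classical inputs. The induction through the MRC fibration is exactly the device that keeps the demands within reach — one only ever needs these results in the big case or on strictly lower-dimensional rationally connected fibres. Two further technical points require care: the gluing in the non-big case (checking that the fibrewise rationally connected fibrations and Kähler polarizations patch into a global projective fibration with a global Kähler class on $Y$, compatibly with the canonical bundle formula), and, in the nef case, making precise via the canonical bundle formula that the discriminant and the absorption of the $\mu$-exceptional parts really do leave $D_Y$ a $\mathbb Q$-divisor.
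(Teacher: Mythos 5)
Your overall shape (split into the big and non-big cases, and use the MRC fibration in the non-big case) matches the paper, but two of your main steps do not go through as stated. First, in the big case you invoke the termination of a \emph{global} MMP for Kähler-polarized big klt pairs and a base-point-free theorem for nef and big transcendental classes in arbitrary dimension. Neither is available: the latter is precisely the open Conjecture~\ref{conj--basepointfree} (known only for numerical dimension $\le 3$ via \cite{dhy}), and the former is likewise not established beyond dimension three in the Kähler setting. This makes your big case circular. It is also unnecessary: since the theorem only asks for $\mu^*(K_X+\alpha)=f^*(\alpha_1)+D$ with $f$ allowed to be the identity (a point is rationally connected), Demailly's regularization theorem plus principalization already gives $\mu^*(K_X+\alpha)=\omega'+D'$ with $\omega'$ Kähler and $D'$ an effective $\mathbb{Q}$-divisor, which settles both assertions in the big case. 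The paper disposes of this case in one line and reserves all work for the non-big case.

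Second, and more seriously, in the non-big case the step ``spreading this out over $Y_0$'' is not an argument --- it is the entire difficulty. Applying the inductive hypothesis on a general fiber $G$ of the MRC fibration produces a modification of $G$ and a fibration $G'\to Y_G$, but there is no reason these fiberwise constructions vary in a family, extend over all of $Y_0$, or produce a \emph{global} Kähler class $\alpha_1$ on the total space $Y$. In particular, even if the fiberwise bases glue over a dense open subset of $Y_0$, the class you obtain is at best relatively Kähler over $Y_0$, and converting it into a genuine Kähler class on $Y$ while keeping $D$ effective requires excess positivity of $\mu^*(K_X+\alpha)$ in the $Y_0$-direction. This is exactly what the paper supplies via Lemma~\ref{lem-not relatively psef} and \cite[Theorem~6.2]{hacon-paun} (pseudoeffectivity of twisted relative canonical classes, using that $K_{Y_0}$ is pseudoeffective by \cite{ou2025}), and its global substitute for your fiberwise induction is a \emph{relative} MMP over $Y_0$ with scaling of a big class, iterated through a tower of Mori--Fano fibrations with the boundary kept big at each step by the Hacon--P\u{a}un canonical bundle formula (Lemma~\ref{lem-bigness if preserved under fano fibration}). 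Your treatment of the ``moreover'' clause has the same problem: you need a canonical bundle formula with a nef moduli part and rational discriminant for the final fibration $f$, whereas the paper's construction yields $\nu_{0,1}^*(K_{X_{0,1}}+B_{0,1}+\alpha_{0,1})=f^*(\omega_Y+D_Y)$ directly from the MMP, so that only the negativity lemma is needed to conclude.
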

We show in Proposition~\ref{prop-dim of base coincide with numerical} that 
\( \dim Y \) coincides with the numerical dimension of \( K_X + \alpha \) 
as defined in \cite{bdpp}.  
The assumptions on \( X \) and \( \alpha \) can be weaken to the case where 
\( \alpha \) is big and \( (X, \alpha) \) forms a generalized klt pair; 
see Definition~\ref{def-numerical generalized pair} and 
Theorem~\ref{thm--generalized klt version}, as well as 
\cite{dhy, hacon-paun} for further discussions on generalized pairs.  
Moreover this result suggests that, in order to establish the transcendental 
base-point-freeness conjecture, it should suffice to treat the case where 
the adjoint class \( K_X + \alpha \) is big; see 
Remark~\ref{rmk-enough to show freeness for big class} and Corollary \ref{cor-numerical dim at most 3 is done}. In particular, applying \cite[Theorems~1.2 and~2.33]{dhy}, we can show that the transcendental base-point-freeness theorem holds whenever the adjoint class has numerical dimension at most three.

\begin{corollary}\label{cor-bpf for ndleq 3}
Let \( X \) be a compact Kähler manifold, and let \( \alpha \) be a big and nef class such that \( K_X + \alpha \) is nef. 
If the numerical dimension \( \nd(K_X + \alpha) \le 3 \), then there exists a holomorphic morphism 
\( f : X \to Y \) onto a compact Kähler space and a Kähler class \( \omega_Y \) on \( Y \) such that
\begin{equation}
    K_X + \alpha = f^*(\omega_Y).
\end{equation}
\end{corollary}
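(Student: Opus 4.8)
The plan is to use Theorem~\ref{thm-weak basepointfree} to replace the high-dimensional problem on $X$ by a problem on a base $Y$ of dimension $\le 3$, to run the transcendental base-point-free theorem of \cite{dhy} there, and then to descend the resulting fibration back to $X$. Since $K_X+\alpha$ is nef it is pseudo-effective, so Theorem~\ref{thm-weak basepointfree} applies and produces a diagram $X\xleftarrow{\ \mu\ }Z\xrightarrow{\ f\ }Y$ with $\mu$ a composition of blow-ups along smooth centres, $f$ a projective fibration whose general fibres are rationally connected, $\mu^{*}(K_X+\alpha)=f^{*}\alpha_1+D$ with $\alpha_1$ Kähler on $Y$ and $D\ge 0$, and — using the nefness of $K_X+\alpha$ — an effective $\mu$-exceptional $E$ and an effective $\mathbb Q$-divisor $D_Y$ on $Y$ with $E+D=f^{*}D_Y$, i.e. $\mu^{*}(K_X+\alpha)+E=f^{*}(\alpha_1+D_Y)$. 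By Proposition~\ref{prop-dim of base coincide with numerical}, $\dim Y=\nd(K_X+\alpha)\le 3$.

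Next I would equip $Y$ with a generalized klt pair structure. Writing $K_Z=\mu^{*}K_X+E_{\mu}$ with $E_{\mu}\ge 0$ $\mu$-exceptional, the data $(Z,-E_{\mu})$ together with the nef class $\mu^{*}\alpha$ form a generalized sub-klt pair whose adjoint class $K_Z-E_{\mu}+\mu^{*}\alpha=\mu^{*}(K_X+\alpha)$ restricts to a numerically trivial class on the general (rationally connected) fibre of $f$. Applying the canonical bundle formula for lc-trivial fibrations in the Kähler generalized-pair setting (\cite{hacon-paun}, \cite{dhy}) yields a generalized klt pair $(Y,\Delta_Y;M_Y)$, with $\Delta_Y\ge 0$ and $M_Y$ nef, such that $\mu^{*}(K_X+\alpha)$ agrees with $f^{*}(K_Y+\Delta_Y+M_Y)$ modulo an $f$-vertical $\mathbb R$-divisor; comparing with the identity above, and pushing the vertical correction into $D_Y$ and $E$, the class $\beta_Y:=K_Y+\Delta_Y+M_Y$ has Kähler part $\alpha_1$, hence is big. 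Since $\dim Y\le 3$, I would then invoke \cite[Theorems~1.2 and~2.33]{dhy} for the pair $(Y,\Delta_Y;M_Y)$ — after, if necessary, running the Kähler minimal model program on $Y$ to reach a birational model on which $\beta_Y$ is nef — to obtain a bimeromorphic morphism $p\colon Y^{+}\to Y$ that is a composition of blow-ups along smooth centres, a projective morphism $q\colon Y^{+}\to Y'$ onto a compact Kähler space, and a Kähler class $\omega_{Y'}$ on $Y'$ with $p^{*}\beta_Y=q^{*}\omega_{Y'}$ modulo an effective $p$-exceptional divisor; since $\beta_Y$ is big, $q$ is moreover bimeromorphic.

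Finally I would descend to $X$. Base-changing $f$ along $p$ and resolving, one gets a compact Kähler manifold $\hat Z$ with a composition of blow-ups $\hat\mu\colon \hat Z\to X$ and a morphism $s\colon \hat Z\to Y'$ (the composite $\hat Z\to Y^{+}\xrightarrow{q}Y'$) for which the two previous steps combine into $\hat\mu^{*}(K_X+\alpha)+G=s^{*}\omega_{Y'}$, where $G$ is an effective $\mathbb R$-divisor that, after absorbing the $f$- and $\mu$-vertical and exceptional contributions, is $\hat\mu$-exceptional. On any $\hat\mu$-fibre, $\hat\mu^{*}(K_X+\alpha)$ is trivial and $s^{*}\omega_{Y'}$ is nef, so $G$ is $\hat\mu$-nef; the negativity lemma applied to the effective $\hat\mu$-exceptional $\hat\mu$-nef divisor $G$, whose pushforward to $X$ vanishes, forces $G=0$. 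Hence $\hat\mu^{*}(K_X+\alpha)=s^{*}\omega_{Y'}$, and since $\omega_{Y'}$ is Kähler, $s$ contracts every fibre of $\hat\mu$; by the rigidity lemma there is a holomorphic morphism $g\colon X\to Y'$ with $s=g\circ\hat\mu$, and applying $\hat\mu_{*}$ to $\hat\mu^{*}(K_X+\alpha)=\hat\mu^{*}g^{*}\omega_{Y'}$ gives $K_X+\alpha=g^{*}\omega_{Y'}$. Taking $Y:=Y'$ and $\omega_Y:=\omega_{Y'}$ finishes the proof.

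The step I expect to be the main obstacle is the passage through \cite{dhy} in the second paragraph: one must verify that the discriminant and moduli parts produced by the Kähler generalized-pair canonical bundle formula are genuinely compatible with the effective divisors $D$, $E$, $D_Y$ coming out of Theorem~\ref{thm-weak basepointfree} — so that $\Delta_Y$ remains a klt boundary, $M_Y$ remains nef, and, after the Kähler MMP on the three-fold $Y$, the adjoint class $\beta_Y$ becomes nef — in order that the transcendental base-point-free theorem of \cite[Theorems~1.2 and~2.33]{dhy} can be applied. Once this input is in place, the reduction in the first paragraph is immediate and the descent in the third is a routine application of the negativity lemma and the rigidity lemma.
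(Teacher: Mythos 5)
Your overall strategy --- reduce to the base $Y$ with $\dim Y=\nd(K_X+\alpha)\le 3$ via Theorem~\ref{thm-weak basepointfree} and Proposition~\ref{prop-dim of base coincide with numerical}, run the low-dimensional K\"ahler MMP of \cite{dhy} on $Y$, and descend to $X$ by negativity plus rigidity --- is exactly the route the paper takes (Remark~\ref{rmk-enough to show freeness for big class} combined with Corollary~\ref{cor-numerical dim at most 3 is done}). However, there are two genuine gaps in the way you execute it.

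First, the generalized klt structure on $Y$. You propose to produce it by applying a canonical bundle formula to the sub-pair $(Z,-E_{\mu})$ with nef part $\mu^{*}\alpha$ over the composite fibration $f$. Once the identity $\mu^{*}(K_X+\alpha)+E=f^{*}(\alpha_1+D_Y)$ is used to make the adjoint class an actual pullback, the input is the sub-boundary $E-E_{\mu}$, whose coefficients can be negative or exceed $1$; so even granting a K\"ahler canonical bundle formula for such lc-trivial fibrations (the paper only invokes \cite[Theorem~0.3]{hacon-paun} for a single Mori--Fano contraction of a generalized klt pair with \emph{effective} boundary), the resulting discriminant $\Delta_Y$ need not be effective and $(Y,\Delta_Y+M_Y)$ need not be generalized klt --- which is precisely the hypothesis of \cite[Theorems~1.2 and~2.33]{dhy}. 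You correctly flag this as the main obstacle, but it does not need to be overcome at all: by construction in the proof of Theorem~\ref{thm-weak basepointfree}, the class $\alpha_1+D_Y$ on $Y$ is already the pullback of the \emph{big} adjoint class $K_{X_l}+B_l+\alpha_l$ of the strongly $\mathbb{Q}$-factorial generalized klt pair $(X_l,B_l+\alpha_l)$ with big boundary, produced step by step by the relative MMPs, Lemma~\ref{lem-generalized klt is preserved}, and \cite[Theorem~0.3]{hacon-paun} at each Mori--Fano fibration. The paper feeds this pair directly into \cite{dhy}; no new canonical bundle formula is required.

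Second, the descent. You assert that the correction divisor $G$ in $\hat\mu^{*}(K_X+\alpha)+G=s^{*}\omega_{Y'}$ becomes $\hat\mu$-exceptional ``after absorbing the $f$- and $\mu$-vertical and exceptional contributions.'' It does not: $G$ contains the strict transforms (pulled back to $\hat Z$) of the divisors on $Y$ contracted by the MMP $Y\dashrightarrow Y'$, and while these are $f$-vertical, their images in $X$ can perfectly well be divisors, so they are not $\hat\mu$-exceptional and the negativity lemma over $X$ alone cannot kill them. The paper's Remark~\ref{rmk-enough to show freeness for big class} handles this by splitting the correction into a $\nu$-exceptional effective part $E$ and an effective part $F$ whose image in the final base has codimension at least two, and then combines the negativity lemma with \cite[Lemma~2.8]{das-hacon2024} to force $E=F=0$; some such additional ingredient is needed to close your third paragraph.
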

 
Note that the statement in Theorem \ref{thm-weak basepointfree} for the nef case of \( K_X + \alpha \) would follow as a consequence of the transcendental base-point-freeness conjecture.  
However, the pseudoeffective case of \( K_X + \alpha \) in this transcendental setting seems to be new.  
When \( X \) is projective and \( \alpha \) is defined by an \( \mathbb{R} \)-Cartier divisor, the theorem above corresponds to a weaker version of the non-vanishing theorem proved in \cite{BCHM} and \cite[Theorem~0.1]{puaun2012relative}.

The proof of Theorem~\ref{thm-weak basepointfree} relies on the results of \cite{ou2025, bdpp} and on the relative minimal model program (MMP) for projective morphisms, which has been developed in \cite{fujino2022, fujino-cone, Das_HP2024}, extending the work of \cite{BCHM} to projective morphisms between normal complex analytic spaces.  
We briefly outline the main idea of the proof here.  

If \( K_X + \alpha \) is a big class, we may simply take \( f \) to be the identity, and the existence of \( \mu \) satisfying the desired property follows directly from Demailly’s regularization theorem~\cite{demailly1992, demailly2012analytic} and the principalization of ideal sheaves \cite{Hironaka1, wlodarczyk2008}.  
Hence, we focus on the case~\eqref{eq--initial assumption}, under which, in particular, \( K_X \) is not pseudoeffective:
\begin{equation}\label{eq--initial assumption}
    K_X + \alpha \text{ is not big.}
\end{equation}
In this situation, we invoke the recent breakthrough result of \cite{ou2025}, which generalizes the theorem of \cite{bdpp} to the non-projective Kähler setting.  
It follows that for any compact Kähler manifold \( X \) with non-pseudoeffective canonical bundle \( K_X \), there exists a bimeromorphic map
\(
\mu_0 : X_0 \longrightarrow X,
\)
which is a composition of blow-ups along smooth complex submanifolds, such that \( X_0 \) admits a projective fibration to a lower-dimensional compact Kähler manifold
\(
X_0 \longrightarrow Y_0,
\)
whose general fibers are rationally connected and whose canonical class \( K_{Y_0} \) is pseudoeffective.  
This construction serves as the starting point for the proof of Theorem~\ref{thm--main}.

Ideally, one would hope that \( \mu_0^*(K_X + \alpha) \) is the pullback of a big class on \( Y_0 \), in which case the theorem would follow immediately by applying Demailly’s regularization theorem on \( Y_0 \).  
In general, however, we need to run relative (i.e., over \( Y_0 \)) MMPs with scaling; see \cite{nakayama1987, fujino2022, Das_HP2024}.  
It is worth noting that our argument requires running two types of relative MMPs with scaling.  
The first step is roughly to run a relative \( K_{X_0} + \mu_0^*(\alpha) \)-MMP with scaling of a relatively ample \( \mathbb{R} \)-line bundle, in order to make \( K_{X_0} + \mu_0^*(\alpha) \) nef.  
Once this is achieved, we then run the relative \( K_X \)-MMP with scaling of the big class \( \mu_0^*(\alpha) \).
The second, relative MMP differs from the standard MMP, which typically stops upon reaching a Mori–Fano fibration.  
Here, even after arriving at a Mori–Fano fibration, as long as the condition~\eqref{eq--initial assumption} continues to hold on the base of the fibration, we need to continue the relative MMP on the base of the Mori-Fano fibration.  
For this step, we rely on the results of \cite{hacon-paun}, which ensure that one still obtains a big boundary class on the base after reaching a Mori–Fano fibration.

Since \( K_{Y_0} \) is pseudoeffective, we can show that after finitely many 
divisorial contractions, flips, and Mori–Fano fibrations, the process 
terminates at a Mori–Fano fibration whose base no longer satisfies 
\eqref{eq--initial assumption}.  
At this stage, we apply Demailly’s regularization theorem to a resolution of 
the resulting base and trace back the adjoint class.  
Finally, the stronger statement in Theorem~\ref{thm-weak basepointfree} for 
the case where \( K_X + \alpha \) is nef follows directly from a standard 
application of the negativity lemma.

When \( \nef_X(\omega_0) \ge 1 \), Theorem~\ref{thm-weak basepointfree} provides a positive \((1,1)\)-current representing the cohomology class \( \mu^*(K_X + \omega_0) \) with suitable regularity properties.  
This current plays a central role in establishing a generalized parabolic Schwarz lemma, which in turn yields a lower bound for the diameter of \( \omega_t \).  
The proof of this generalized Schwarz lemma requires addressing some analytic issues, notably those arising from the exceptional locus of \( \mu \) and the absence of a uniform \( C^0 \)-control for the potential; see Section~\ref{sec--proof of main theorem}.

The paper is organized as follows. In Section~\ref{sec-relative mmp} we collect several preliminary results, prove a number of auxiliary lemmas used throughout the paper, and give a brief sketch of the relative MMP with scaling of a relatively big $\mathbb{R}$-line bundle. In Section~\ref{sec-structur theorem} we establish the weak transcendental base-point-freeness theorem and a generalized version thereof, and we discuss some of its consequences. Finally, in Section~\ref{sec--proof of main theorem} we present the proof of Theorem~\ref{thm--main}: we first derive a degenerate $C^0$ estimate for the potential and then complete the argument via a generalized parabolic Schwarz lemma. We also remark that an analogous diameter lower bound holds along the continuity paths of K\"ahler metrics considered in the literature.
\subsection*{Acknowledgements}The author would like to thank Valentino Tosatti for many helpful discussions and for his constant support. He thanks Christopher Hacon for generous email exchanges and for answering the author’s questions. He also thanks Mihai Păun and Song Sun for their interest.
\section{Relative MMP for  projective morphisms}\label{sec-relative mmp}
A \emph{fibration} between normal complex analytic spaces is a proper surjective morphism with connected fibers.  
A \emph{projective fibration} between compact normal complex analytic spaces is a fibration that is moreover a projective morphism. A \textit{pair} \( (Z, B) \) consists of a normal complex analytic space \( Z \) together with an effective \( \mathbb{Q} \)-divisor \( B \) such that \( K_Z + B \) is \( \mathbb{Q} \)-Cartier; that is, there exists an integer \( m \ge 1 \) such that  
\begin{equation}
    \bigl((\mathcal{O}_Z(K_Z) \otimes \mathcal{O}_Z(B))^{\otimes m}\bigr)^{**}
\end{equation}
is a line bundle, where \( \mathcal{O}_Z(K_Z) \) denotes the canonical sheaf of \( Z \).
We refer to \cite[Section 4.6]{boucksom-guedj-book}, \cite[Sections~2 and~3.1]{horing-peternell}, \cite[Section~2]{dh-3fold} and \cite[Section~2]{Das_HP2024} for the basic definitions of K\"ahler spaces, $\mathbb Q$-facotriality and strong $\mathbb Q$-facotriality  of a normal complex analytic space, the Bott--Chern cohomology group and its basic properties, as well as for the notions of K\"ahler classes, nef classes, big classes and pseudo-effective classes on normal compact K\"ahler spaces.

\subsection{Prelininaries}\label{sec:preliminary}
Throughout this section, we will mainly consider projective fibrations $f:Z\rightarrow Y$ between compact klt K\"ahler spaces. We collect some basic facts here for later references.
\begin{enumerate}
\item Let \( Z \) be a compact normal Kähler space, and let \( \nu: Z' \rightarrow Z \) be a projective morphism.  
Then \( Z' \) is  a Kähler space \cite[Lemma 4.4]{fujiki1978}.  
Moreover, every compact normal Kähler space \( Z \) admits a projective resolution \( \nu: Z' \rightarrow Z \) by \cite[Theorem 5.4.2]{ArocaVincen-Hironaka} and \cite[Corollary 2]{hironaka1975flattening}.  
Then in particular, \( Z' \) is a compact Kähler manifold.
    \item klt singularities imply raitonal singularities \cite[Theorem 3.12]{fujino2022};
    \item if general fibers $F$ of the morphism $f:Z\rightarrow Y$ are rationally connected, then by \cite[Theorem 4.1]{claudon-horing}, for all $i>0$, \begin{equation}\label{eq--vanishing of high pushforward}
        R^if_*\mathcal{O}_Z=0;
    \end{equation}
       \item suppose we have a commutative diagram of projective fibrations
\begin{equation}  
\begin{tikzcd}[column sep=small, row sep=small]
Z \arrow[dr,"f"'] \arrow[rr,"\varphi"] && Z_1 \arrow[dl,"f_1"] \\
& D &
\end{tikzcd}
\end{equation}If a general fiber of $f$ is rationally conected, then general fibers of \( f_1 \) and a general fibers of $\varphi_1$ are also rationally connected. Moreover,  a general fiber of \( f_1 \) is rationally connected also when \( \varphi \) is a bimeromorphic map. In the following, we will use this property without explicitly mentioning it.
    \item by \cite[Lemma 2.5]{das-hacon2024} and  the proof in \cite[Lemma 2.6]{das-hacon2024}, we know that   $f^*:H^{1,1}_{\BC}(Y)\rightarrow H^{1,1}_{\BC}(X)$ is injective. Moreover if \eqref{eq--vanishing of high pushforward} is satisfied, then the image of $f^*$ is given by 
\begin{equation*}\operatorname{Im}\left(f^*\right)=\left\{\alpha \in H_{\mathrm{\BC}}^{1,1}(X) \mid \alpha \cdot C=0\right. \text{ for all curves $C \subset X$ s.t. $\left.f(C)=\mathrm{pt}\right\}$};
    \end{equation*}

    \item Let $N_1(Z/Y)$ be the real vector space generated by the irreducible curves on $Z$ whose image under $f$ is a point, module the numerical equivalence relation that 
    \begin{equation}\label{eq-nuemrical equi}
        C_1\equiv C_2 \text{ if and only if } C_1\cdot L=C_2\cdot L_2 \text{ for any } L\in \mathrm{Pic}(Z).
    \end{equation}Let $N^1(Z/Y)$ be the real vector space generated by line bundles on $Z$ module the numerical equivalence that 
    \begin{equation}
        L_1\equiv L_2 \text{ if and only if } C\cdot L_1=C\cdot L_2 \text{ for any curve $C\in N_1(Z/Y)$ }.  
    \end{equation}By \cite[Proposition 4.3]{nakayama1987}, we know that both $N_1(Z/Y)$ and $N^1(Z/Y)$ are finite dimensional real vector spaces. Throughout this paper, by a relative $\mathbb{R}$-line bundle we mean an element of $N^1(Z/Y)$; that is, we always identify it with its numerical equivalence class in $N^1(Z/Y)$.

    \item By \cite[Theorem 12.1.3, Corollary 12.1.5]{kollar-mori1992classification} and (3) above, we know that if \eqref{eq--vanishing of high pushforward} is satisfied, then  
    \begin{equation}\label{eq--divisor equal H2}
        N_1(Z/Y)=H_2(Z/Y,\mathbb R) \text{ and } N^1(Z/Y)=H^{1,1}_{\BC}(Z)/H^{1,1}_{\BC}(Y).
    \end{equation}Indeed, under the assumption \eqref{eq--vanishing of high pushforward}, we can follows the argument in the proof of \cite[Lemma 2.6]{das-hacon2024} and we can get inclusions 
    \begin{equation}
      N^1(Z/Y)  \hookrightarrow H^{1,1}_{\BC}(Z)/  H^{1,1}_{\BC}(Y)\hookrightarrow H^2(Z,\mathbb R)/H^2(Y, \mathbb R).    \end{equation}Taking a projective resolution \( Z' \rightarrow Z \), and applying the conclusion to both \( Z' \rightarrow Z \) and \( Z' \rightarrow Y \), we are reduced to show \eqref{eq--divisor equal H2} holds in the case where \( Z \) is smooth.
      By \cite[Theorem 12.1.3]{kollar-mori1992classification}, we know that $\ker(H_2(Z,\mathbb R)\rightarrow H_2(Y,\mathbb R))$ is generated by 1-cycles contained in fibers of $f$. Therefore in order to show \eqref{eq--divisor equal H2}, it is enough to show that if a 1-cycle is numerically zero, then it also defines a zero homology class in $H_2(Z,\mathbb R)$. Since $Z$ is a smooth compact K\"ahler manifold, this follows from the  Lefschetz theorem on (1,1)-classes. 
    \item As usual, we can define the Mori cone
$\overline{\mathrm{NE}}(Z / Y)
$,  that is, $\overline{\mathrm{NE}}(Z / Y )$ is the closure of the convex cone in $N_1(Z / Y )$ spanned by the integral curves $C$ on $Z$ such that $\pi(C)$ is a point of $Y$. Then we have the following  Kleiman’s ampleness criterion for projective morphisms \cite[Proposition 4.7]{nakayama1987}:
   an   $\mathbb R$-line bundle  $L$ is ample over $Y$ if and only if $L$ is positive on $\overline{\mathrm{NE}}(Z / Y) \backslash\{0\}$.

\item A class \( \alpha \in H^{1,1}_{\BC}(Z) \) is said to be \emph{\( f \)-big} (or \emph{big over \( Y \)}) if there exists a class \( \theta \in H^{1,1}_{\BC}(Y) \) such that \( \alpha + f^*(\theta) \) is a big class on \( Z \).  
A class \( \alpha \in H^{1,1}_{\BC}(Z) \) is said to be \emph{\( f \)-pseudoeffective} if for every \( f \)-big class \( \beta \), the sum \( \alpha + \beta \) is \( f \)-big. 
We will need the following well-known result.
\begin{lemma}\label{lem-characterization of big divisors}\label{lem-relative big line bundle has fiberwise sections}
   Let $L$ be an $f$-big $\mathbb{Q}$-line bundle on $Z$. For a general point $y\in Y$, we have 
   \begin{equation}
       \limsup_{k\rightarrow \infty}\frac{\dim \left(f_*\mathcal{O}(kL)_{y}/\mathfrak{m}_yf_*\mathcal{O}(kL)_{y}\right) }{k^{\dim Z-\dim Y}}>0.
   \end{equation}
\end{lemma}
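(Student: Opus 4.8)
The plan is to reduce the assertion to the bigness of $L|_{Z_y}$ on a general fiber and then to quote that a big $\mathbb Q$-line bundle on a $d$-dimensional projective variety has $h^0(kL)$ of order $k^d$, where $d:=\dim Z-\dim Y$. Fix $m\ge 1$ with $mL$ Cartier and restrict throughout to $k\in m\mathbb Z_{>0}$; this is harmless for the $\limsup$. First I would note that for \emph{every} $y\in Y$ the quantity $\dim_{k(y)}\bigl(f_*\mathcal O(kL)_y/\mathfrak m_yf_*\mathcal O(kL)_y\bigr)$ is, by Nakayama, the minimal number of generators of the $\mathcal O_{Y,y}$-module $f_*\mathcal O(kL)_y$, hence is at least the generic rank $r_k:=\operatorname{rank}_{\mathcal O_Y}f_*\mathcal O(kL)$: spread a minimal surjection $\mathcal O_{Y,y}^{\oplus\mu(y)}\twoheadrightarrow f_*\mathcal O(kL)_y$ to a neighborhood of $y$ and restrict it to a nearby point where $f_*\mathcal O(kL)$ is locally free --- such points are dense because $Y$, being normal, is irreducible. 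So it suffices to exhibit one $y$ with $\limsup_k r_k/k^d>0$; since the bound just obtained is uniform in $y$, the conclusion will then in fact hold at every point of $Y$.

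Next I would identify $r_k$ fiberwise. By generic flatness $f$ is flat over the complement of a nowhere dense analytic subset of $Y$, and over the dense open subset $U_k$ of that locus on which Grauert's base-change theorem moreover applies, $f_*\mathcal O(kL)$ is locally free of rank $r_k$ and commutes with base change; hence $r_k=h^0\bigl(Z_y,\mathcal O_{Z_y}(kL|_{Z_y})\bigr)$ for $y\in U_k$, with $Z_y$ a projective variety of dimension $d$ and $L|_{Z_y}$ a $\mathbb Q$-line bundle.

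The crux is to show $L|_{Z_y}$ is big on $Z_y$ for general $y$. Since $L$ is $f$-big, there is $\theta\in H^{1,1}_{\BC}(Y)$ with $L+f^*\theta$ big on $Z$; passing to a projective resolution of $Z$ and applying Demailly's regularization theorem together with principalization of ideal sheaves --- exactly the construction sketched in the introduction --- yields a modification $\mu:\tilde Z\to Z$ from a compact K\"ahler manifold with $\mu^*(L+f^*\theta)=\{\omega'\}+[E']$, where $\omega'$ is a K\"ahler class and $E'$ an effective $\mathbb R$-divisor. For $y$ outside a proper analytic subset --- using generic flatness of $\Exc(\mu)$ and of the components of $E'$ over $Y$ --- the map $\mu$ restricts to a modification $\mu_y:\tilde Z_y\to Z_y$ and $E'|_{\tilde Z_y}$ is an honest effective $\mathbb R$-divisor. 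Restricting the identity above to $\tilde Z_y$ and using that $(f^*\theta)|_{Z_y}=0$ because $Z_y$ is contracted by $f$, I get $\mu_y^*(L|_{Z_y})=\{\omega'|_{\tilde Z_y}\}+[E'|_{\tilde Z_y}]$, a K\"ahler class plus an effective divisor class, hence big on $\tilde Z_y$; by bimeromorphic invariance of bigness, $L|_{Z_y}$ is big on $Z_y$. I expect this step to be the main obstacle: $f$-bigness concerns only a class pulled back from the base, and converting it into genuine positivity on a general fiber needs the K\"ahler analogue of Kodaira's lemma (Demailly regularization) together with careful bookkeeping of the birational restriction to a fiber --- this is where normality of $Z$ and the structural facts recalled in Section~\ref{sec-relative mmp} are used.

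Finally, choosing $y$ in the intersection of the $U_k$ with the good locus of the previous paragraph --- a countable intersection of dense opens, hence still a general point of $Y$ --- and using $\vol_{Z_y}(L|_{Z_y})>0$ (as $L|_{Z_y}$ is big and $\dim Z_y=d$), I conclude
\[
\limsup_{k\to\infty}\frac{\dim_{k(y)}\bigl(f_*\mathcal O(kL)_y/\mathfrak m_yf_*\mathcal O(kL)_y\bigr)}{k^{d}}
\;\ge\;\limsup_{\substack{k\to\infty\\ k\in m\mathbb Z}}\frac{r_k}{k^{d}}
\;=\;\frac{\vol_{Z_y}(L|_{Z_y})}{d!}\;>\;0,
\]
where the inequality is the bound of the first paragraph and the equality combines the base-change identification $r_k=h^0(Z_y,kL|_{Z_y})$ of the second paragraph with the definition of the volume (applied along the progression $m\mathbb Z$). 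By the remark at the end of the first paragraph, the same lower bound in fact holds at every $y\in Y$.
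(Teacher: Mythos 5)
Your proof is correct, but it takes a genuinely different route from the paper's. The paper argues directly and analytically: it regularizes a Kähler current in $c_1(L)+f^*(\theta)$ to one with analytic singularities, restricts to a small Stein neighborhood $U\ni y$ where $\theta$ is $\partial\bar\partial$-exact (so that the current becomes the curvature of a singular metric on $L$ itself over $f^{-1}(U)$), and then uses Hörmander's $L^2$-estimates with weights having a logarithmic pole at a point $z$ of the fiber to manufacture roughly $k^{\dim Z-\dim Y}$ sections of $kL$ over $f^{-1}(U)$ with independent jets at $z$, which directly bounds $\dim\bigl(f_*\mathcal O(kL)_y/\mathfrak m_y f_*\mathcal O(kL)_y\bigr)$ from below. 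You instead make a sheaf-theoretic reduction (Nakayama plus spreading out gives fiber dimension $\ge$ generic rank; generic flatness plus Grauert base change identifies the generic rank with $h^0(Z_y,kL|_{Z_y})$), and then prove the key geometric fact that the restriction of an $f$-big class to a general fiber is big, by restricting the Kähler-plus-effective decomposition from Demailly regularization and using $(f^*\theta)|_{Z_y}=0$. What your route buys is a cleaner conceptual statement and, as you note, the stronger conclusion that the bound holds at \emph{every} $y\in Y$; what the paper's route buys is that it sidesteps base-change technicalities, the countable intersection of the opens $U_k$, and — most importantly — the final quoted fact that a line bundle on a projective variety whose class contains a Kähler current has positive volume (Ji--Shiffman/Demailly/Boucksom), which is itself proved by exactly the $L^2$-method the paper deploys directly. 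Two small points of care in your write-up: since $E'$ is only an $\mathbb R$-divisor you should either perturb it to a $\mathbb Q$-divisor by absorbing the error into $\omega'$ (as the paper does elsewhere) or invoke the current-theoretic bigness criterion on the fiber rather than counting sections of $kE'$; and one should check that the general fiber $\tilde Z_y$ is irreducible with $Z_y$ normal so that $\mu_y$ is an honest modification and $h^0$ is a bimeromorphic invariant — both are standard for general fibers of fibrations from normal spaces in characteristic zero.
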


\begin{proof}
Let $T \in c_1(L) + f^*(\theta)$ be a closed positive $(1,1)$-current with local $\partial\bar\partial$-potentials, and suppose that $T \geq \alpha$ for some Kähler form $\alpha$ on $Z$. 
Take a resolution of singularities $\pi \colon Z' \to Z$ with $Z'$ a smooth compact Kähler manifold. 
Since $[\pi^*\alpha]$ is nef and big on $Z'$, it follows from \cite{demailly-paun} that $[\pi^*\alpha]$ admits a Kähler current, and hence $[\pi^*T]$ also admits a Kähler current. 
By Demailly’s regularization theorem on $Z'$, we can obtain a Kähler current in $[\pi^*T]$ with analytic singularities. 
Therefore, in the class $c_1(L) + f^*(\theta)$ we obtain a Kähler current $T'$ whose singular locus is contained in an analytic subvariety.

Let $y$ be a general smooth point of $Y$, and let $U$ be a small Stein neighborhood of $y$ such that $\theta$ is $\partial\bar\partial$-exact on $U$. 
Pick a point $z \in f^{-1}(y)$ such that both $Z$ and $T'$ are smooth in a neighborhood of $z$. 
Note that $T'$ corresponds to the curvature form of a Hermitian metric defined on the $\mathbb{Q}$-line bundle $L$. 
By a standard application of Hörmander’s $L^2$-estimate—prescribing the jets at $z$ of a local holomorphic section, cutting-off, and then applying the $L^2$-method with weights having logarithmic pole at $z$ on $f^{-1}(U)$—we can obtain the desired estimate.
\end{proof}

\item We will use the following consequence of Demailly’s regularization theorem~\cite{demailly1992}, together with resolution of singularities and the principalization of ideal sheaves~\cite{Hironaka1, wlodarczyk2008}, without explicitly mentioning it later.  
Let \( Z \) be a compact normal Kähler space and \( \alpha \in H^{1,1}_{\BC}(Z) \) a big class.  
Then there exists a projective resolution \( \nu: Z' \to Z \), a Kähler class \( \omega' \), and an effective \( \mathbb{Q} \)-divisor \( D' \) on \( Z' \) such that 
\begin{equation}
    \nu^*(\alpha) = \omega' + D' \quad \text{in } H^{1,1}_{\BC}(Z').
\end{equation}
Indeed, let \( T \in \alpha \) be a Kähler current with local \( \partial \bar{\partial} \)-potentials.  
Take a projective resolution of singularities \( \pi: \widetilde{Z} \to Z \), where \( \widetilde{Z} \) is a smooth Kähler manifold.  
Since there exists a Kähler form \( \omega \) on \( Z \) such that \( \pi^*T \ge \pi^*\omega \) as currents on \( \widetilde{Z} \), and since \( [\pi^*\omega] \) is nef and big on \( \widetilde{Z} \), it follows from~\cite{demailly-paun} that \( [\pi^*T] \) admits a Kähler current.
Applying Demailly’s regularization theorem to \( \widetilde{Z} \), and after possibly taking a further resolution, we obtain the desired decomposition.
\item We will frequently use the negativity lemma in the complex analytic setting, proved in \cite[Lemma~1.3]{wang2021iitaka}, which generalizes \cite[Lemma~3.39]{KM}, without giving an explicit reference each time.  
We remark that although the statement in \cite[Lemma~1.3]{wang2021iitaka} is formulated for Cartier divisors, the conclusion also holds for \( \mathbb{R} \)-Cartier divisors by a standard approximation argument, using the existence of an exceptional \( \mathbb{Q} \) divisor \( A \) such that \( -A \) is relatively ample after passing to a suitable blow-up.

\item We will use the following result without explicitly referring to it later.  
It is likely well known to experts; see, for example, \cite[Corollary~2.32]{Das_HP2024} and \cite[Lemma~4.3]{hacon-paun}.  
Since we could not find a detailed proof in the literature, we include one here for completeness.
\begin{lemma}\label{lem--adding exceptional no effect}
   Let \( f: Z \rightarrow Y \) be a fibration between compact normal Kähler spaces, and let \( \alpha \in H^{1,1}_{\BC}(Y) \).  
Suppose there exists an $\mathbb R$-Cartier divisor \( E \) on \( Z \) such that \( \mathrm{codim}_Y f(\operatorname{Supp} E) \ge 2 \) and that 
\(
f^*(\alpha) + [E] \in H^{1,1}_{\BC}(Z)
\)
is a pseudoeffective class on \( Z \).  
Then \( \alpha \) is pseudoeffective on \( Y \).
\end{lemma}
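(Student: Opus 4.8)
The plan is to reduce to the case where everything is smooth and then to manufacture, on a resolution of $Y$, a closed positive $(1,1)$-current whose Bott--Chern class is a positive multiple of $\alpha$ modulo an exceptional effective divisor. This current is produced by fibre-integrating a positive representative of $f^*\alpha+[E]$ against a power of a K\"ahler form on $Z$; the key point is that the contribution of $[E]$ must disappear because $f(\operatorname{Supp}E)$ has codimension at least $2$.

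Concretely, I would first choose a projective resolution $\nu\colon\widetilde Y\to Y$ which is an isomorphism over the regular locus of $Y$, and then, after resolving indeterminacies, a projective resolution $\mu\colon\widetilde Z\to Z$ such that the induced meromorphic map $\widetilde Z\dashrightarrow\widetilde Y$ is a surjective morphism $g\colon\widetilde Z\to\widetilde Y$ with $\nu\circ g=f\circ\mu$; then $\widetilde Z$ and $\widetilde Y$ are compact K\"ahler manifolds. Pulling back by the surjective map $\mu$, the class $g^*\nu^*\alpha+[\mu^*E]$ is pseudoeffective on $\widetilde Z$, with $\mu^*E\ge 0$ and $g(\operatorname{Supp}\mu^*E)\subseteq\nu^{-1}(f(\operatorname{Supp}E))$. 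Since $Y$ is normal, $\nu$ is an isomorphism in codimension one, so every codimension-one component of $\nu^{-1}(f(\operatorname{Supp}E))$ is $\nu$-exceptional.

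Next I would fix a K\"ahler form $\omega$ on $\widetilde Z$, put $r=\dim\widetilde Z-\dim\widetilde Y$, and take a closed positive $(1,1)$-current $T\ge 0$ representing $g^*\nu^*\alpha+[\mu^*E]$. Then $S:=g_*\big(T\wedge\omega^r\big)$ is a closed positive $(1,1)$-current on $\widetilde Y$, and by compatibility of fibre integration with the Gysin map together with the projection formula, its Bott--Chern class is $c\,\nu^*\alpha+g_*\big([\mu^*E]\cdot[\omega]^r\big)$, where $c=\int_F\omega^r>0$ is the volume of a general fibre $F$ of $g$. The current $g_*\big([\mu^*E]\wedge\omega^r\big)$ is a closed positive $(1,1)$-current supported on $g(\operatorname{Supp}\mu^*E)$; by the Siu decomposition together with the fact that a closed positive $(1,1)$-current supported on an analytic set of codimension $\ge 2$ vanishes, it equals the integration current of an effective $\mathbb R$-divisor $G'$ all of whose components are $\nu$-exceptional. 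Hence $c\,\nu^*\alpha+[G']$ is pseudoeffective on $\widetilde Y$, so is $\nu^*\alpha+\tfrac1c[G']$, and pushing forward by $\nu$---which annihilates $[G']$ and satisfies $\nu_*\nu^*=\mathrm{id}$ on $H^{1,1}_{\BC}$, while preserving pseudoeffectivity---shows that $\alpha$ is pseudoeffective on $Y$.

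The manipulations with pullback and pushforward of pseudoeffective classes under modifications and the behaviour of fibre integration in cohomology are routine. The only delicate step, and the one where the hypothesis $\operatorname{codim}_Y f(\operatorname{Supp}E)\ge 2$ is genuinely used, is the assertion that the push-forward of $[\mu^*E]\wedge\omega^r$ is carried by an effective $\nu$-exceptional divisor; this is exactly the point that rests on the classical vanishing of closed positive $(1,1)$-currents supported in codimension at least two.
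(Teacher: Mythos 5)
Your argument takes a genuinely different route from the paper's. The paper works directly on the (possibly singular) space $Y$: it restricts the positive current $T\in f^*(\alpha)+[E]$ to $f^{-1}(U_i\setminus F)$ with $F=f(\supp E)$, uses connectedness of the fibres to show that the local psh potential is a pullback $f^*(\psi_i)$, verifies that $\psi_i$ is psh on $U_i\setminus F$ via Demailly's criterion for weakly psh functions and Fubini, and finally extends $\psi_i$ across the codimension-two set $F$. Your fibre-integration argument on resolutions, combined with the support theorem for closed positive $(1,1)$-currents, is a legitimate alternative: it replaces the descent of potentials along connected fibres by cohomological bookkeeping on smooth models, at the price of having to transport the conclusion back down to $Y$ at the end.

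Two points need repair. First, the lemma does not assume $E$ effective, and the paper in fact invokes it with $E$ equal to the \emph{negative} of an effective divisor (second application inside the proof of Lemma~\ref{lem-bigness if preserved under fano fibration}); so your assertions that $\mu^*E\ge 0$ and that $g_*([\mu^*E]\wedge\omega^r)$ is a positive current are unjustified as stated. This is fixable: write $E=E_+-E_-$ with $E_\pm\ge 0$ and apply the support theorem to each of $g_*([\mu^*E_\pm]\wedge\omega^r)$ separately, obtaining a $\nu$-exceptional but not necessarily effective $\mathbb{R}$-divisor $G'$. Second, your final step---descending the pseudoeffectivity of $c\,\nu^*\alpha+[G']$ from $\widetilde Y$ to $Y$ through the bimeromorphic morphism $\nu$---is itself the bimeromorphic special case of the lemma being proved, and it conceals exactly the delicate point: one must push the positive current forward to $Y$, observe that over $Y\setminus\nu(\mathrm{Exc}(\nu))$ it has local potentials representing a multiple of $\alpha$, and extend those potentials across a codimension-two analytic subset of the normal space $Y$. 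In particular, ``$\nu_*\nu^*=\mathrm{id}$ on $H^{1,1}_{\BC}$ while preserving pseudoeffectivity'' is not a formal fact on singular spaces---pushforward of Bott--Chern classes is subtle there, as the paper itself emphasizes in Section~\ref{sec-definiton of pushforward}. Once these two points are spelled out, your proof goes through, but note that the extension-across-codimension-two step you defer to the end is precisely the substance of the paper's direct argument.
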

\begin{proof}
    Let $F=f(\supp(E))$ which is a closed analytic subset of codimension at least 2 in $Y$. Fix an open cover $\{U_i\}$ of $Y$ such that the class $\alpha$ admits a smooth representative $\theta$ with smooth local potentials on each $U_i$. Let $T$ the positive $(1,1)$-current with local psh potentials in the class $f^*(\alpha)+[E]$. Then there exists a distribution $\varphi$ on  $f^{-1}(Y\setminus F)$, such that as a distribution on $f^{-1}(Y\setminus F)$ we have
    \begin{equation}
        T=f^*(\theta)+\ii\p\pp \varphi.
    \end{equation}Since $T$ admits local psh potentials, then using \cite[Lemma 4.6.1]{boucksom-guedj-book}, we can obtain that there exist psh function $\varphi_i$ on each $f^{-1}(U_i\setminus F)$ such that as currents on $f^{-1}(U_i\setminus F)$ we have  
    \begin{equation}
        T=\ii\p\pp \varphi_i.
    \end{equation}Since for each $y\in Y$, the fiber $f^{-1}(y)$ is connected. Then by the psh property of   $\varphi_i$, we obtain that its restriction to each fiber is a constant (could be $-\infty$), i.e. there exists a function $\psi_i:U\setminus F\rightarrow [-\infty, \infty)$ such that $\varphi_i=f^*(\psi_i)$. Since $\varphi_i$ is psh, we know that $\psi_i$ is upper-semi-continuous, locally bounded above and locally integrable. By \cite[Theorem 1.7]{demailly1985-weaklypsh}, we know that to show $\psi_i$ is a psh function, it is enough to show that as a current 
    \begin{equation}
        \ii\p\pp \psi_i\geq 0.
    \end{equation}This follows form the psh property of $\varphi_i$ and the Fubini theorem, since after taking resolution of $Z$, we can find a K\"ahler form $\omega$ on $Z$ such that for a general point $y\in Y$, we have $\int_{Z_y}\omega^{\dim Z-\dim Y}=1$. Since $F$ has codimension at least 2, we know that $\psi_i$ extends as a psh function on $U_i$ and hence globally it defines a positive (1,1)-current in the class $\alpha$. Therefore we conclude that $\alpha$ is  pseudo-effective.
\end{proof}

\end{enumerate}

\subsection{Extremal contractions and $\mathbb Q$-factoriality}
Let \( f \colon Z \to Y \) be a projective fibration between compact normal Kähler spaces, and suppose that \( (Z, B) \) is a klt pair.
The relative cone and contraction theorems for \( K_Z + B \)-negative extremal rays have been established in \cite[Theorem~4.12]{nakayama1987}; see also \cite[Theorem~1.2]{fujino-cone} and \cite[Theorem~2.44]{Das_HP2024}.  
Let \( R \) be a \( K_Z + B \)-negative extremal ray in \( \overline{\mathrm{NE}}(Z/Y) \), and let \( \varphi \colon Z \to Z_1 \) be the associated extremal contraction.   By the construction in \cite{fujino-cone, fujino2022, Das_HP2024}, each \( Z_1 \) admits a projective fibration over \( Y \).  
Hence, by \cite[Lemma~4.4]{fujiki1978}, \(Z_1 \) is a compact normal Kähler space. 
In what follows, we will use this fact without explicitly mentioning it.

 If \( \varphi \) is a flipping contraction, its flip exists by \cite[Theorem~1.14]{fujino2022}.  
Let \( \varphi^{+} : Z^{+} \rightarrow Z_1 \) denote the flip, and let  
\( \phi : Z \dashrightarrow Z^{+} \) be the induced small bimeromorphic map and $B^+:=\phi_*B$.  
We remark that the existence of the flip is established via the finite generation of the \( \mathcal{O}_{Z_1} \)-algebra
\begin{equation}
    \bigoplus_{m \in \mathbb{N}} \varphi_{*} \mathcal{O}_{Z}\!\left(\left\lfloor m\left(K_{Z} + B\right)\right\rfloor\right),
\end{equation}
which can be verified locally on \( Z_1 \); see \cite[Theorem~1.8]{fujino2022}.
If $\dim Z_1<\dim Z$, then we will call it a Mori-Fano fibration. i.e. 
\begin{itemize}
     \item  $\dim (Z_1)< \dim (Z)$ and the relative Picard number $\rho(Z/Z_1)=1$.
    \item $(Z,B)$ is a klt pair and $-(K_{Z}+B)$ is $\varphi$-ample;
\end{itemize}

The following lemma is well known to experts; see \cite[Lemma~2.5]{dh-3fold}.  
For the reader’s convenience and for later reference, we state it here together with a proof.
\begin{lemma}\label{q-factroial is preserved}
Let \( (Z, B) \) be a klt pair, and let 
\( \varphi \) be a \( K_Z + B \)-negative extremal contraction with relative Picard number \( 1 \).  
Then:
\begin{enumerate}
     \item If \( \varphi \) is a divisorial contraction, then \( Z_1 \) is \( \mathbb{Q} \)-factorial if $Z$ is $\mathbb Q$-factorial and $Z_1$ is strongly $\mathbb Q$-factorial if $Z$ is strongly $\mathbb Q$-factorial;
    \item If \( \varphi \) is a flipping contraction, then the flip \( Z^{+} \) is \( \mathbb{Q} \)-factorial  if $Z$ is $\mathbb Q$-factorial and $Z_1$ is strongly $\mathbb Q$-factorial if $Z$ is strongly $\mathbb Q$-factorial;
    \item If \( \varphi \) is a Mori–Fano fibration, then \( Z_1 \) is strongly \( \mathbb{Q} \)-factorial if \( Z \) is strongly \( \mathbb{Q} \)-factorial.
\end{enumerate}
\end{lemma}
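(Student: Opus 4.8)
The statement to prove is Lemma~\ref{q-factroial is preserved}, concerning preservation of $\mathbb{Q}$-factoriality (and its strong variant) under divisorial contractions, flips, and Mori--Fano fibrations for a $K_Z+B$-negative extremal contraction of relative Picard number one. Here is my plan.

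\medskip

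\noindent\textbf{Proof proposal.} The plan is to treat the three cases by the classical argument of \cite[Proposition~3.36,~3.37]{KM}, adapted to the compact normal Kähler analytic setting, where the key point is to translate Cartier/$\mathbb{Q}$-Cartier questions into statements about pushing forward and pulling back divisors along the contraction, using that $\rho(Z/Z_1)=1$.

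First I would set up the divisorial case. Let $\varphi\colon Z\to Z_1$ be a divisorial contraction with exceptional prime divisor $E$, and let $D_1$ be a prime Weil divisor on $Z_1$; I want to show a multiple of $D_1$ is Cartier. Take the strict transform $D$ of $D_1$ on $Z$; since $Z$ is $\mathbb{Q}$-factorial, $mD$ is Cartier for some $m\ge 1$. The class of $mD$ in $N^1(Z/Z_1)$ is a single real number relative to the extremal ray $R$ (since $\rho(Z/Z_1)=1$), so there is a rational number $a$ with $(mD + a\,E)\cdot R = 0$. By the relative cone theorem and the fact that $\varphi$ is the contraction of $R$, the $\mathbb{Q}$-Cartier divisor $mD+aE$ is numerically trivial over $Z_1$, hence by the base-point-free/contraction theorem (relative version, \cite{fujino-cone, Das_HP2024}, applied locally on $Z_1$) it is the pullback $\varphi^*(L)$ of a $\mathbb{Q}$-Cartier divisor $L$ on $Z_1$; pushing forward gives $L = \varphi_*(mD+aE)=mD_1$ up to linear equivalence, so $mD_1$ is $\mathbb{Q}$-Cartier. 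For the strong $\mathbb{Q}$-factoriality variant (requiring the bound on $m$ to be uniform, or the relevant reflexive power to be a line bundle — whatever the precise definition in \cite[Section~2]{Das_HP2024} is), I would observe the argument is effective: $m$ comes from strong $\mathbb{Q}$-factoriality of $Z$ and the cone theorem, and the denominators are controlled uniformly, so strong $\mathbb{Q}$-factoriality passes down.

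For the flip, let $\phi\colon Z\dashrightarrow Z^+$ be the small bimeromorphic map and $\varphi^+\colon Z^+\to Z_1$ the flipping contraction. Given a prime Weil divisor $D^+$ on $Z^+$, let $D$ be its strict transform on $Z$ (which has the same support data since $\phi$ is small); $mD$ is Cartier for some $m$. Pushing down to $Z_1$, $\varphi_*(mD)$ is a Weil divisor on $Z_1$, and I claim $mD^+$ is $\mathbb{Q}$-Cartier: indeed $\varphi^+$ is also the contraction of an extremal ray of relative Picard number one (by construction of the flip), and $\varphi^{+*}\varphi_*(mD)$ differs from $mD^+$ by a multiple of the (reflexive) pullback which, since the flipped locus has codimension $\ge 2$, is again controlled; more cleanly, run the same numerical-triviality argument on $Z^+$: choose $a^+\in\mathbb{Q}$ with $(mD^+ + a^+(\text{something supported on }\Exc\varphi^+))\cdot R^+=0$ — but $\Exc\varphi^+$ has codimension $\ge 2$, so actually $mD^+\cdot R^+$ being a single number forces, after possibly adjusting, $mD^+$ itself to be relatively numerically trivial modulo codimension-two stuff, whence $\varphi^{+*}$-pulled-back; I would instead follow \cite[Proposition~3.37]{KM} directly: $\operatorname{Cl}(Z^+)\cong\operatorname{Cl}(Z_1)$ modulo the finitely many contracted/extracted divisors, and $\mathbb{Q}$-factoriality is inherited because $Z\dashrightarrow Z^+$ is an isomorphism in codimension one and $Z$ is $\mathbb{Q}$-factorial, using that a Weil divisor is $\mathbb{Q}$-Cartier iff its pullback to a common resolution is $\mathbb{Q}$-Cartier-compatible. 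The strong variant again follows by tracking denominators.

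For the Mori--Fano fibration, $\dim Z_1 < \dim Z$ and $\rho(Z/Z_1)=1$; here I would use that $f\colon Z\to Z_1$ satisfies $R^if_*\mathcal{O}_Z=0$ for $i>0$ (klt/rational singularities, item~(2), and general fibers being rationally connected — or at least $\varphi_*\mathcal{O}_Z=\mathcal{O}_{Z_1}$ plus $R^1\varphi_*\mathcal{O}_Z=0$ from the Fano-type fibers) so that, as in item~(6), $N^1(Z/Z_1)=H^{1,1}_{\BC}(Z)/H^{1,1}_{\BC}(Z_1)$ is one-dimensional and the Leray spectral sequence gives an exact sequence relating $\operatorname{Pic}(Z_1)$ and $\operatorname{Pic}(Z)$. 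Given a prime Weil divisor $D_1$ on $Z_1$, pull back to get a Weil divisor $D = \varphi^{-1}(D_1)$ on $Z$ whose class lies in the image of $\operatorname{Cl}(Z_1)$; since $D$ has some multiple Cartier on $Z$ ($Z$ strongly $\mathbb{Q}$-factorial) and its class is relatively numerically trivial (it is a pullback, so pairs to zero with every curve contracted by $\varphi$), the relative contraction theorem expresses $mD = \varphi^*M$ for a $\mathbb{Q}$-Cartier $M$ on $Z_1$, and $M$ agrees with $mD_1$ generically hence everywhere; the uniformity of $m$ gives strong $\mathbb{Q}$-factoriality of $Z_1$.

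\medskip

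\noindent\textbf{Main obstacle.} The delicate point throughout is the passage ``numerically trivial over $Z_1$ implies pullback from $Z_1$'': in the analytic category this requires the relative base-point-free theorem applied locally on the (non-projective) base $Z_1$, together with care that the resulting line bundle on $Z_1$ glues — this is where one genuinely uses the projectivity of $\varphi$ and the cited relative MMP machinery \cite{fujino-cone, fujino2022, Das_HP2024}, rather than a purely topological argument. A secondary subtlety, specific to distinguishing $\mathbb{Q}$-factorial from \emph{strongly} $\mathbb{Q}$-factorial, is checking that the denominators $m$ produced by the cone theorem and by $\mathbb{Q}$-factoriality of $Z$ can be chosen from a single finite set independent of the Weil divisor; I expect this follows because $N^1$ is finite-dimensional and the extremal ray is rational, so only finitely many "denominators" can appear, but it needs to be stated carefully against whatever precise definition of strong $\mathbb{Q}$-factoriality is adopted in \cite[Section~2]{Das_HP2024}.
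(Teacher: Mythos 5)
Your overall strategy (descend a divisor by making it relatively numerically trivial and invoking the relative contraction theorem, following \cite[Corollary~3.18, Proposition~3.37]{KM}) is the same one the paper uses; indeed the paper simply cites \cite{dh-3fold} and \cite{KM} for the divisorial and flipping cases and only writes out the Mori--Fano case in detail. However, there is a genuine gap in your treatment of \emph{strong} $\mathbb{Q}$-factoriality, stemming from a misreading of the definition. In the convention of this paper (following \cite{Das_HP2024, dh-3fold}), strong $\mathbb{Q}$-factoriality is not a uniform-denominator condition on Weil divisors: it is the requirement that \emph{every rank-one reflexive coherent sheaf} (divisorial sheaf) on the space have a reflexive tensor power that is a line bundle. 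On a normal complex analytic space such sheaves need not be represented by Weil divisors (the paper's Remark immediately after the lemma points out that even the canonical sheaf need not be a $\mathbb{Q}$-Weil divisor, which is precisely why part~(3) is stated only for the strong notion). Consequently your ``track the denominators'' passages do not address the actual statement, and your Mori--Fano argument, which starts from a prime Weil divisor $D_1$ on $Z_1$ and pulls back $\varphi^{-1}(D_1)$, proves at best that Weil divisors on $Z_1$ are $\mathbb{Q}$-Cartier --- strictly weaker than what part~(3) asserts.

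The repair is exactly the paper's argument, and it is your argument run on the right objects: start from an arbitrary divisorial sheaf $\mathcal{F}$ on $Z_1$, form the reflexive pullback $(\varphi^*\mathcal{F})^{**}$, which is a $\mathbb{Q}$-line bundle on $Z$ by strong $\mathbb{Q}$-factoriality of $Z$; its intersection with a generating curve of the extremal ray lying over a point of $Z_1^{\mathrm{reg}}$ is zero (and $\rho(Z/Z_1)=1$ then gives relative numerical triviality), so by \cite[Theorem~4.12]{nakayama1987} a reflexive power descends to a line bundle $\mathcal{L}$ on $Z_1$; finally one identifies $\mathcal{L}$ with $(\mathcal{F}^{\otimes m})^{**}$ on $Z_1^{\mathrm{reg}}$ via the projection formula and extends the isomorphism across the codimension-$\ge 2$ singular locus by reflexivity. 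Your identification step ``$M$ agrees with $mD_1$ generically hence everywhere'' is the right idea but needs this reflexive-sheaf formulation to be meaningful (note that the cycle-theoretic pushforward $\varphi_*$ of a divisor under a fibration of positive relative dimension is zero, so the comparison must be made at the level of sheaves on the regular locus). A smaller point: in the flipping case your first attempt to adjust $mD^+$ by a divisor supported on $\Exc(\varphi^+)$ cannot work since that locus has codimension $\ge 2$; the correct adjustment in \cite[Proposition~3.37]{KM}, to which you rightly defer, is by a rational multiple of $K_Z+B$, which is $\mathbb{Q}$-Cartier on both sides of the flip by construction.
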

\begin{proof}
In the case where \( Z \) is strongly \( \mathbb{Q} \)-factorial and \( \varphi \) is a divisorial or flipping contraction, the statement is proved in \cite[Lemma]{dh-3fold}.  
Combining that argument with the standard techniques from the algebraic case 
(see \cite[Corollary~3.18 and Proposition~3.37]{KM}), the statement for the case where \( Z \) is \( \mathbb{Q} \)-factorial also follows.  
We now consider the case where \( \varphi \) is a Mori–Fano fibration.

Let $\mathcal{F}$ be a divisorial sheaf on $Z_1$, i.e., a rank-one reflexive coherent sheaf.
Then \( \mathcal{F} \) is locally free on \( Z_1^{\mathrm{reg}} \).  
Consider the reflexive pullback \( (\varphi^*(\mathcal{F}))^{**} \), which coincides with \( \varphi^*(\mathcal{F}) \) on \( \varphi^{-1}(Z_1^{\mathrm{reg}}) \).  
Since \( Z \) is strongly \( \mathbb{Q} \)-factorial, \( (\varphi^*(\mathcal{F}))^{**} \) is a \( \mathbb{Q} \)-line bundle on \( Z \). Let \( C \) be an irreducible curve whose class generates the $K_X+B$-negative extremal ray, and we may assume that \( \varphi(C)=pt \subset Z_1^{\mathrm{reg}} \).  
Then
\begin{equation}
    C \cdot (\varphi^*(\mathcal{F}))^{**} = 0.
\end{equation}
Hence, by the property of extremal contractions in \cite[Theorem 4.12]{nakayama1987}, there exists a line bundle \( \mathcal{L} \) on \( Z_1 \) such that 
\begin{equation}
    (\varphi^*(\mathcal{F})^{\otimes m})^{**} \simeq \varphi^*(\mathcal{L}).
\end{equation}
Since \( (\varphi^*(\mathcal{F})^{\otimes m})^{**} \) coincides with \( \varphi^*(\mathcal{F}^{\otimes m}) \) on \( \varphi^{-1}(Z_1^{\mathrm{reg}}) \) and $\mathcal{F}^{\otimes m}$ coincides with $(\mathcal{F}^{\otimes m})^{**}$ on $Z_1^{\mathrm{reg}}$,
the projection formula implies that 
\[
\mathcal{L} \simeq (\mathcal{F}^{\otimes m})^{**} \quad \text{on } Z_1^{\mathrm{reg}}.
\]
As both \( \mathcal{L} \) and \( (\mathcal{F}^{\otimes m})^{**} \) are reflexive sheaves and the complement of $Z_1^{\mathrm{reg}}$ in $Z_1$ has codimension at least 2, 
it follows that \( (\mathcal{F}^{\otimes m})^{**} \) is globally isomorphic to \( \mathcal{L} \) on \( Z_1 \); in particular, it is a line bundle.
\end{proof}

\begin{remark}
 Note that for normal complex analytic spaces the canonical sheaf need not be represented by a $\mathbb{Q}$-Weil divisor and in our convention, $\mathbb{Q}$-factoriality also requires that the canonical sheaf be a $\mathbb{Q}$-line bundle. With this in mind, for a Mori–Fano fibration $\varphi\colon Z\to Z_1$, it seems not clear that $Z_1$ is $\mathbb{Q}$-factorial under the sole assumption that $Z$ is $\mathbb{Q}$-factorial.
\end{remark}

\subsection{Pushing-forward some classes in $H^{1,1}_{\BC}$}\label{sec-definiton of pushforward}
In general, pushing forward a class in \( H^{1,1}_{\BC} \) is a subtle matter.  
For our purposes, we define the pushforward only for certain special classes and only in the case where we have a \( K_Z + B \)-negative extremal contraction with relative Picard number one.  
In the bimeromorphic case, the pushforward is relatively straightforward and follows from Section~\ref{sec:preliminary}–(4); see, for instance, \cite[Lemma~2.14]{dhy}.  
In the case of a Mori–Fano fibration, we rely on the canonical bundle formula established in \cite{hacon-paun}.

Let \( f \colon Z \to Y \) be a projective fibration between compact normal Kähler spaces.  
We always assume that \( Z \) is \( \mathbb{Q} \)-factorial and that \( (Z,B) \) is a klt pair.  
Let \( \varphi \colon Z \to Z_1 \) be the extremal contraction associated to a \( K_Z + B \)-negative extremal ray \( R \in \overline{\mathrm{NE}}(Z/Y) \).  
We know that \( \dim N^1(Z/Z_1) = 1 \), and that the following exact sequence holds:
\begin{equation}
    0 \longrightarrow N^1(Z_1/Y) \xrightarrow{\varphi^*} N^1(Z/Y) \longrightarrow N^1(Z/Z_1) \longrightarrow 0.
\end{equation}
Since \( K_Z + B \) is relatively ample over \( Z_1 \) and \( \dim N^1(Z/Z_1) = 1 \), for any \( \alpha \in N^1(Z/Y) \), there exists a unique real number \( \lambda_\alpha \in \mathbb{R} \) such that  
\begin{equation}\label{eq-definition of lambda}
    \alpha + \lambda_\alpha (K_Z + B) \in \operatorname{Im}(\varphi^*).
\end{equation}For later applications, we mainly interested in those $\alpha\in H^{1,1}_{\BC}(Z)$ with  $\lambda_{\alpha}>0$.

We further assume that the general fibers of \( f \colon Z \to Y \) are rationally connected, and throughout the remainder of this subsection we work under this assumption.  
As mentioned in Section~\ref{sec:preliminary}, this implies \eqref{eq--divisor equal H2}, which will play an important role in what follows.  
Let \( \alpha \in H^{1,1}_{\BC}(Z) \).  
Using \eqref{eq--divisor equal H2}, we let \( \lambda_{\alpha} \) denote, as before, the real number such that \eqref{eq-definition of lambda} holds, which means there exists \( \beta \in H^{1,1}_{\BC}(Z_1) \) such that  
\begin{equation}\label{eq-class is the pull-back class}
    \alpha + \lambda_{\alpha}(K_Z + B) = \varphi^*\beta \quad \text{in } H^{1,1}_{\BC}(Z).
\end{equation}
Such a class \( \beta \) is unique, as noted in Section~\ref{sec:preliminary}–(3).

We first define the pushforward of a class \( \alpha \) when \( \varphi \) is bimeromorphic.  
% In the following, if \( \varphi \) is a flipping contraction, let \( \varphi^+: Z^+ \rightarrow Z_1 \) denote its flip, and let  
% \( \phi: Z \dashrightarrow Z^+ \) be the induced small birational map, whose existence is established in \cite[Theorem~1.14]{fujino2022}.

\begin{definition}\label{def-pushforward of numerical class} Let $B_{1}:=\varphi_*(B)$ and $B^+:=\phi_*(B)$.

  \begin{enumerate}
      \item If $\varphi$ is a divisorial contraction, then  we define 
      \begin{equation}
          \varphi_*(\alpha):=\beta-\lambda_{\alpha} (K_{Z_1}+B_{1})\in H^{1,1}_{\BC}(Z_1).
      \end{equation}
      \item If $\varphi$ is a flipping contraction, we define
      \begin{equation}
          \phi_*(\alpha):=(\varphi^+)^*(\beta)-\lambda_{\alpha} (K_{Z^+}+B^{+}) \in H^{1,1}_{\BC}(Z^+).
      \end{equation}
  \end{enumerate}
\end{definition}

We make a few remarks about this  definition. 
\begin{itemize}
    \item Note that in the above definition, we implicitly used the fact that $(Z_1,B_1)$ a $\mathbb Q$-factorial klt pair when $\varphi$ is divisorial contraction and $(Z^+,B^+)$ is a $\mathbb Q$-factorial klt pair when $\varphi$ is a flipping contraction; see Lemma \ref{q-factroial is preserved}.
    \item  Clearly both $\varphi_*$ and $\phi_*$ are linear maps and $\varphi_*$ is surjective and $\phi_*$ is bijective. 
        \item Without assuming \eqref{eq--divisor equal H2}, we can still define pushing-forward of classes in $N^1(Z/Y)$.
    \item  The definition is chosen to make the following if $K_Z+B+\alpha$ is nef over $Y$ and $\varphi$-trivial, then $K_{Z_1}+B_1+\varphi_*(\alpha)$ and $K_{Z^+}+B^++\phi_*(\alpha)$ is nef over $Y$ respectively.
    \item This definition is compatible with the notion of pushing-forward Weil divisors, i.e. if a class $\alpha$ is represented by a $\mathbb R$-divisor $W$, then $\varphi_*(\alpha)$ (respectively $\phi_*(\alpha)$) coincides with the numerical class definied by the $\mathbb R$ divisor $\varphi_*(W)$ (respectively $\phi_*(W)$). (Note that we assumed that $Z$ is $\mathbb Q$-factorial.)
\end{itemize}

Now consider the case where $\varphi\colon Z\to Z_1$ is the Mori–Fano fibration associated to a $K_Z+B$-negative extremal ray. We restrict to classes $\alpha\in H^{1,1}_{\BC}(Z)$ with $\lambda_{\alpha}>0$. By a straightforward scaling argument, it suffices to define $\varphi_{*}(\alpha)$ in the normalized case $\lambda_{\alpha}=1$, since the general case then follows by homogeneity. In this situation, our definition of $\varphi_{*}(\alpha)\in H^{1,1}_{\BC}(Z_1)$ relies on the canonical bundle formula for generalized pairs established by Hacon–Păun \cite{hacon-paun}. 
We refer to \cite[Section 2]{dhy} for definitions and properties of generalized pairs. In this paper, we will abuse the notation using the following definition.
\begin{definition}\label{def-numerical generalized pair}
    Let $Z$ be a  $\mathbb Q$-factorial compact normal K\"ahler space, $B$ an effective $\mathbb R$-divisor and $\alpha \in H^{1,1}_{\BC}(Z)$. We say $(Z,B+\alpha)$ is a generalized pair if there exists a generalized pair $(Z, B+\boldsymbol{\beta})$ in the sense of \cite[Definition 2.7]{dhy} with $\boldsymbol{\beta}$ a positive closed b-(1,1) current such that 
\begin{equation}
    \alpha=[\boldsymbol{\beta}_Z] \text{ in } H^{1,1}_{\BC}(Z).
\end{equation}
\end{definition}
 Note that without the \( \mathbb{Q} \)-factorial assumption, the notion of \( [\boldsymbol{\beta}_Z] \) does not make sense, since the positive \((1,1)\)-current \( \boldsymbol{\beta}_Z \) may fail to admit local potentials. Under the assumption that \( Z \) is \( \mathbb{Q} \)-factorial, this can be understood as follows.  
Take a log resolution \( \nu: (Z', B') \to (Z, B) \). Then there exists a positive \((1,1)\)-current \( \boldsymbol{\beta}_{Z'} \) on \( Z' \) with local potentials such that \( [\boldsymbol{\beta}_{Z'}] \) is nef and 
\[
[K_{Z'} + B' + \boldsymbol{\beta}_{Z'}] = \nu^*(\gamma)
\]
for some \( \gamma \in H^{1,1}_{\BC}(Z) \).  
Since \( Z \) is \( \mathbb{Q} \)-factorial, we can write
\begin{equation}
    [\boldsymbol{\beta}_{Z'}] + K_{Z'} + B' - \nu^*(K_Z + B) = \nu^*(\gamma - (K_Z + B)).
\end{equation}
By the negativity lemma, we know that \( K_{Z'} + B' - \nu^*(K_Z + B) = E \) is an  exceptional effective $\mathbb R$-divisor.  
Pushing forward the positive \((1,1)\)-current \( \boldsymbol{\beta}_{Z'} + E \) to \( Z \) then yields $\boldsymbol{\beta}_{Z}$, a positive \((1,1)\)-current with local potentials in the class $\gamma-(K_Z+B)$, as discussed in \cite[Remark~2.6(ii)]{dhy}.

\begin{definition}\label{def--pushing forward for mori-fano fibration}
    Let $\varphi:Z\rightarrow Z_1$ be a Mori-Fano fibration. Given $\alpha\in H^{1,1}_{\BC}(Z)$. Suppose $Z$ is strongly $\mathbb Q$-factorial and  $(Z,B+\alpha)$ is a generalized klt and is $\varphi$-trivial. By \cite[Theorem 0.3]{hacon-paun}, there exists a generalized klt pair $(Z_1, B_{Z_1}+\boldsymbol{\beta}_{Z_1})$ such that $K_Z+B+\alpha=\varphi^*(K_{Z_1}+B_{Z_1}+\boldsymbol{\beta}_{Z_1})$. Then we define
    \begin{equation}
        \varphi_*(\alpha)=[\boldsymbol{\beta}_{Z_1}] \quad \text{ in } \quad H^{1,1}_{\BC}(Z_1).
    \end{equation}
\end{definition}

We make a few remarks about this  definition. 
\begin{itemize}
    \item We have used the fact that \( Z_1 \) is \( \mathbb{Q} \)-factorial; see Lemma \ref{q-factroial is preserved}. Recalling our notational convention, we thus obtain that \( (Z_1, B_{Z_1} + \varphi_*(\alpha)) \) forms a generalized klt pair. Moreover, by \cite[Lemma~2.13]{dhy}, the pair \( (Z_1, B_{Z_1}) \) is klt.
    \item  In the definition of generalized klt singularities \cite[Definition~2.9]{dhy}, one considers the \emph{generalized discrepancy}, which may differ from the usual discrepancy of a pair.  
Hence, the condition that \( (Z, B + \alpha) \) is a generalized klt pair is nontrivial and does not follow automatically from the assumption that \( (Z, B) \) is klt.  
However, by \cite[Remark~2.10]{dhy}, this holds when \( \alpha \) is nef.  
In our later applications involving the relative MMP, we will use this fact to ensure that the initial data forms a generalized klt pair.
\end{itemize}

\subsection{Bigness of boundary classes preserved after extremal contractions} We will  show in this section that the bigness of the boundary class is preserved under \( K_Z + B \)-negative extremal contractions of relative Picard number 1.
To simplify notation, when the extremal contraction is bimeromorphic, we use the same symbol \( \varphi: Z \dashrightarrow Z_1 \) to denote either a divisorial contraction or a flip associated with a \( K_Z + B \)-negative extremal ray.  
Whenever we refer to the pushforward of a class in \( H^{1,1}_{\BC}(Z) \) by \( \varphi \), it is always understood in the sense of Definitions~\ref{def-pushforward of numerical class} and~\ref{def--pushing forward for mori-fano fibration}.

We first note the following immediate consequence of the definition in the bimeromorphic case.

\begin{lemma}\label{lem--more effective before contraction}
    Suppose $\lambda_{\alpha}>0$, then there exists a common resolution $Z\xlongleftarrow[]{\nu}Z'\xlongrightarrow[]{\nu_1} Z_1$ such that for any $t\in [0,\lambda_{\alpha}^{-1}]$, there exists effective $\nu_1$-exceptional $\mathbb R$-divisor $E_t$ such that 
    \begin{equation}
        \nu^*(K_Z+B+t \alpha)=\nu_1^*(K_{Z_1}+B_{1}+t\varphi_*(\alpha))+E_t
    \end{equation}
\end{lemma}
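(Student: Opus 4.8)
The plan is to reduce the claimed identity to its two endpoints $t=0$ and $t=\lambda_\alpha^{-1}$, exploiting that both sides are affine in $t$, then to treat $t=\lambda_\alpha^{-1}$ by an elementary computation in $H^{1,1}_{\BC}$ and $t=0$ via the negativity lemma. First I would fix a smooth compact Kähler manifold $Z'$ dominating both $Z$ and $Z_1$ --- hence also, in the flipping case, the target $W$ of the flipping contraction $\psi\colon Z\to W$, since $W$ is dominated by $Z$ --- with morphisms $\nu\colon Z'\to Z$ and $\nu_1\colon Z'\to Z_1$. In the flipping case I also record the morphism $\mu\colon Z'\to W$, which factors as $\mu=\psi\circ\nu=\psi^+\circ\nu_1$, where $\psi^+\colon Z_1\to W$ is the flip of $\psi$ (so $Z_1=Z^+$ in the notation of Definition~\ref{def-pushforward of numerical class}). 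In the divisorial case I simply put $W=Z_1$, $\psi=\varphi$, $\psi^+=\mathrm{id}$ and $\mu=\nu_1$.

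The reduction is purely formal. For $t\in[0,\lambda_\alpha^{-1}]$, set $s=t\lambda_\alpha\in[0,1]$; then $K_Z+B+t\alpha=(1-s)(K_Z+B)+s\bigl(K_Z+B+\lambda_\alpha^{-1}\alpha\bigr)$ in $H^{1,1}_{\BC}(Z)$, and likewise $K_{Z_1}+B_1+t\varphi_*(\alpha)=(1-s)(K_{Z_1}+B_1)+s\bigl(K_{Z_1}+B_1+\lambda_\alpha^{-1}\varphi_*(\alpha)\bigr)$. Hence, once the identity is established at $t=0$ with some effective $\nu_1$-exceptional $E_0$ and at $t=\lambda_\alpha^{-1}$ with $E_{\lambda_\alpha^{-1}}=0$, it follows for every $t$ in the interval with $E_t=(1-s)E_0$, which is again effective and $\nu_1$-exceptional.

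At $t=\lambda_\alpha^{-1}$ I would argue directly. By \eqref{eq-class is the pull-back class}, $K_Z+B+\lambda_\alpha^{-1}\alpha=\lambda_\alpha^{-1}\bigl(\alpha+\lambda_\alpha(K_Z+B)\bigr)$ is $\lambda_\alpha^{-1}$ times the pullback of a class $\beta$ from $Z_1$ in the divisorial case and from $W$ in the flipping case; unwinding Definition~\ref{def-pushforward of numerical class}, $K_{Z_1}+B_1+\lambda_\alpha^{-1}\varphi_*(\alpha)$ equals $\lambda_\alpha^{-1}\beta$ in the divisorial case and $\lambda_\alpha^{-1}(\psi^+)^*\beta$ in the flipping case. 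Pulling both classes back to $Z'$ --- using $\nu_1=\varphi\circ\nu$ in the divisorial case and $\mu=\psi\circ\nu=\psi^+\circ\nu_1$ in the flipping case --- they coincide, so one may take $E_{\lambda_\alpha^{-1}}=0$.

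The substantive point is $t=0$, where one must show that $E_0:=\nu^*(K_Z+B)-\nu_1^*(K_{Z_1}+B_1)$ is an effective $\nu_1$-exceptional $\mathbb{R}$-divisor; I expect this to be the main (though still routine) obstacle, the only real issue being to verify that the standard argument transfers to the Kähler-analytic category. Pushing $E_0$ forward by $\mu$, both $\mu_*\nu^*(K_Z+B)$ and $\mu_*\nu_1^*(K_{Z_1}+B_1)$ equal $\psi_*(K_Z+B)$ --- using that $B_1$ is the pushforward of $B$ and, in the flipping case, that $\psi^+\circ\phi=\psi$ --- so $E_0$ is $\mu$-exceptional, and therefore $\nu_1$-exceptional (in the flipping case this uses that $\psi^+$ is small and hence has no exceptional divisor). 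Moreover $-E_0$ is $\mu$-nef: for an irreducible curve $C\subset Z'$ contracted by $\mu$, one has $(K_{Z_1}+B_1)\cdot(\nu_1)_*C\ge 0$ because $K_{Z_1}+B_1$ is $\psi^+$-nef, while $(K_Z+B)\cdot\nu_*C\le 0$ because $-(K_Z+B)$ is $\psi$-ample, whence $-E_0\cdot C\ge 0$. The negativity lemma, available for $\mathbb{R}$-Cartier divisors in this setting, then yields $E_0\ge 0$, completing the plan.
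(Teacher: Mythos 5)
Your proposal is correct and follows essentially the same route as the paper: the identity at $t=\lambda_\alpha^{-1}$ holds with zero error term by unwinding the definition of the pushforward, linear interpolation gives $E_t=(1-\lambda_\alpha t)\bigl(\nu^*(K_Z+B)-\nu_1^*(K_{Z_1}+B_1)\bigr)$, and effectivity of this difference is exactly the paper's appeal to the negativity lemma. The only difference is that you spell out the verification that $-E_0$ is nef over the contraction target and that $E_0$ is exceptional, which the paper leaves implicit.
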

\begin{proof}
  By the definition of the pushforward, there exists a common resolution of \( Z \) and \( Z_1 \),
\[
Z \xlongleftarrow[]{\nu} Z' \xlongrightarrow[]{\nu_1} Z_1,
\]
such that  
\begin{equation}
    \nu^*(K_{Z} + B + \lambda_{\alpha}^{-1} \alpha)
    = \nu_1^*\bigl(K_{Z_{1}} + B_{1} + \lambda_{\alpha}^{-1} \varphi_*(\alpha)\bigr).
\end{equation}
Hence, for any \( t \in [0, \lambda_{\alpha}^{-1}] \), we have
\begin{equation}
    \nu^*(K_{Z} + B + t \alpha)
    = \nu_1^*\bigl(K_{Z_{1}} + B_{1} + t \varphi_*(\alpha)\bigr)
      + (1 - \lambda_{\alpha} t)\bigl(\nu^*(K_{Z} + B) - \nu_1^*(K_{Z_{1}} + B_{1})\bigr).
\end{equation}
By the negativity lemma, the difference \( \nu^*(K_{Z} + B) - \nu_1^*(K_{Z_{1}} + B_{1}) \) is an effective $\nu_1$-exceptional $\mathbb Q$-divisor.
\end{proof}

\begin{lemma}\label{lem--bigness preserved under pushforward} Suppose $\lambda_{\alpha}>0$, then
    if $\alpha$ is a big class, then $\varphi_*(\alpha)$ is big class.
\end{lemma}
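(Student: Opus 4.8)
The plan is to split the proof according to the type of the extremal contraction $\varphi$, using Definitions~\ref{def-pushforward of numerical class} and~\ref{def--pushing forward for mori-fano fibration}.

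\emph{Bimeromorphic case (divisorial contraction or flip).} Here I would argue with Lemma~\ref{lem--more effective before contraction} alone. It furnishes a common resolution $Z\xlongleftarrow{\nu}Z'\xlongrightarrow{\nu_1}Z_1$ together with, for each $t\in[0,\lambda_\alpha^{-1}]$, an effective $\nu_1$-exceptional divisor $E_t=(1-\lambda_\alpha t)E_0$ with $E_0=\nu^*(K_Z+B)-\nu_1^*(K_{Z_1}+B_1)\ge 0$, such that $\nu^*(K_Z+B+t\alpha)=\nu_1^*(K_{Z_1}+B_1+t\varphi_*\alpha)+E_t$. Taking $t>0$, substituting the $t=0$ identity to cancel the $K+B$ terms, and dividing by $t$ gives $\nu_1^*(\varphi_*\alpha)=\nu^*\alpha+\lambda_\alpha E_0$. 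Since $\alpha$ is big on $Z$, its pullback $\nu^*\alpha$ is big on $Z'$; adding the effective class $\lambda_\alpha E_0$ (with $\lambda_\alpha>0$) keeps it big; and a class on a compact normal K\"ahler space is big precisely when its pullback under a resolution is big, because a K\"ahler current pushes forward to a K\"ahler current along a modification (using $\nu^*\omega_Z\le C\,\omega_{Z'}$ to push the defining inequality forward). Hence $\varphi_*\alpha$ is big on $Z_1$.

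\emph{Mori--Fano fibration case.} By homogeneity of both the hypothesis and the construction of $\varphi_*$, I would first normalize to $\lambda_\alpha=1$; then $(Z,B+\alpha)$ is generalized klt and $\varphi$-trivial --- built into the hypotheses, since otherwise $\varphi_*\alpha$ is undefined --- and by \cite[Theorem~0.3]{hacon-paun} one has $K_Z+B+\alpha=\varphi^*(K_{Z_1}+B_{Z_1}+\boldsymbol{\beta}_{Z_1})$ with $\varphi_*\alpha=[\boldsymbol{\beta}_{Z_1}]$. It remains to prove that the moduli class $[\boldsymbol{\beta}_{Z_1}]$ is big. I would first record that $\alpha$ is moreover $\varphi$-big: indeed $\alpha=\varphi^*(K_{Z_1}+B_{Z_1}+\boldsymbol{\beta}_{Z_1})-(K_Z+B)$ and $-(K_Z+B)$ is $\varphi$-ample, so $\alpha$ is $\varphi$-ample; and $B+\alpha$ is big because $B\ge0$. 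The bigness of $[\boldsymbol{\beta}_{Z_1}]$ then follows from the positivity results accompanying the canonical bundle formula for generalized pairs in \cite{hacon-paun}; the mechanism is that on a log resolution along which $\varphi$ becomes a fibration $g\colon W\to W_1$ of compact K\"ahler manifolds, the pullback of $\alpha$ decomposes as a K\"ahler class plus an effective divisor, and the K\"ahler part contributes to the moduli term of the formula through fiber integration, producing a K\"ahler current in the class that descends to $[\boldsymbol{\beta}_{Z_1}]$.

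The step I expect to be the main obstacle is the Mori--Fano case. The bimeromorphic case is bookkeeping with the negativity lemma already packaged in Lemma~\ref{lem--more effective before contraction}; but in the Mori--Fano case one cannot simply subtract off the base contribution, and bigness of the moduli part is not a formal consequence of $\varphi$-triviality --- it genuinely requires the semipositivity and fiber-integration input of \cite{hacon-paun}. I would therefore present that case as a direct appeal to those results rather than attempting to reprove them.
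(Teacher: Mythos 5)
Your bimeromorphic case is correct and is essentially the paper's own argument: the paper reduces to the same identity $\nu_1^*(\varphi_*(\alpha)) = \nu^*(\alpha) + \lambda_{\alpha}E_0$ with $E_0 = \nu^*(K_Z+B)-\nu_1^*(K_{Z_1}+B_1)$ effective and exceptional by the negativity lemma (obtained directly from Definition~\ref{def-pushforward of numerical class} rather than by differencing Lemma~\ref{lem--more effective before contraction}, but this is the same computation), and then concludes by descending bigness along the resolution exactly as you do.

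The Mori--Fano portion of your proposal is both out of scope and incorrect as stated. By the convention announced at the start of this subsection, in this lemma $\varphi$ denotes a divisorial contraction or a flip only; the Mori--Fano case is treated separately in Lemma~\ref{lem-bigness if preserved under fano fibration}, whose conclusion is deliberately weaker: it asserts that the \emph{sum} $B_{Z_1}+\varphi_*(\alpha)$ is big, not that the moduli class $\varphi_*(\alpha)=[\boldsymbol{\beta}_{Z_1}]$ is big on its own. Your sketch claims bigness of $[\boldsymbol{\beta}_{Z_1}]$ alone via ``fiber integration producing a Kähler current in the moduli class,'' but that is not what \cite{hacon-paun} delivers. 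The positivity input there (Theorem~6.2) is pseudo-effectivity of a twisted relative canonical class; after the decomposition carried out in the proof of Lemma~\ref{lem-bigness if preserved under fano fibration}, this yields pseudo-effectivity of $B_{Z_1}+\varphi_*(\alpha)-\omega_1$ for some Kähler class $\omega_1$, hence bigness of $B_{Z_1}+\varphi_*(\alpha)$. Nothing in that mechanism isolates the moduli part from the discriminant part $B_{Z_1}$, and in general the moduli part of a canonical bundle formula need not be big. Fortunately none of this is needed for the lemma at hand; your argument for the case the lemma actually covers stands as written.
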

\begin{proof}
    When $\varphi$ is a divisorial contraction, to show $\varphi_*(\alpha)$ is a big class, it suffices to show that $\varphi^*(\varphi_*(\alpha))$ is a big class. By definition, we have 
    \begin{equation}
        \varphi^*(\varphi_*(\alpha))=\alpha+\lambda_{\alpha}((K_Z+B)-\varphi^*(K_{Z_1}+B_1)).
    \end{equation}By negativity lemma, we know that $(K_Z+B)-\varphi^*(K_{Z_1}+B_1)$ is a positive multiple of the exceptional divisor of $\varphi$. Since $\lambda_{\alpha}$ is positive, we obtain that  $\varphi^*(\varphi_*(\alpha))$ is a big class.

When $\varphi$ is a flip map, taking a common resolution $ Z'$ as before and let let $\pi$ and $\pi_1$ denotes the corresponding morphism to $Z$ and $Z_1$. We will show that $\pi_1^*(\varphi_*(\alpha))$ is a big class. By definition, we have
\begin{equation}
    \pi_1^*(\varphi_*(\alpha))=\pi^*(\alpha)+\lambda_{\alpha}(\pi^*(K_Z+B)-\pi_1^*(K_{Z_1}+B_1)).
\end{equation}Again by negativity lemma, we know that $\pi^*(K_Z+B)-\pi_1^*(K_{Z_1}+B_1)$ is an effective $\pi_1$-exceptional $\mathbb Q$-divisors. Since $\alpha$ is a big class, we conclude that $\pi_1^*(\varphi_*(\alpha))$ is a big class and hence $\varphi_*(\alpha) $ is a big class.
\end{proof}

The following result is a consequence of \cite[Theorem~6.2]{hacon-paun}; see also \cite{paun-takayama} and \cite[Theorem~5.2]{cao-horing}.  
It plays an important role in our later applications in Section~\ref{sec-structur theorem}, as it ensures that even after reaching a Mori–Fano fibration, the relative MMP can still be continued on the base with the scaling of a big class.
\begin{lemma}\label{lem-bigness if preserved under fano fibration}
 Let \( \varphi : Z \to Z_1 \) be a Mori–Fano fibration, where \( Z \) is strongly \( \mathbb{Q} \)-factorial, and let \( \alpha \in H^{1,1}_{\BC}(Z) \). 
Assume that \( (Z, B + \alpha) \) is a generalized klt pair and that \( K_Z + B + \alpha \) is \( \varphi \)-trivial.  
If the class \( B + \alpha \) is big on \( Z \), then the class \( B_{Z_1} + \varphi_*(\alpha) \) is big on \( Z_1 \).
\end{lemma}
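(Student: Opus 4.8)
The plan is to deduce the statement from the positivity of the canonical bundle formula for generalized pairs, i.e.\ from \cite[Theorem~6.2]{hacon-paun} (see also \cite{paun-takayama} and \cite[Theorem~5.2]{cao-horing}). Recall from Definition~\ref{def--pushing forward for mori-fano fibration} that $\varphi_*(\alpha)=[\boldsymbol{\beta}_{Z_1}]$, where $(Z_1,B_{Z_1}+\boldsymbol{\beta}_{Z_1})$ is a generalized klt pair with
\[
K_Z+B+\alpha=\varphi^*\bigl(K_{Z_1}+B_{Z_1}+[\boldsymbol{\beta}_{Z_1}]\bigr)\qquad\text{in }H^{1,1}_{\BC}(Z),
\]
so that the goal is to prove that $B_{Z_1}+[\boldsymbol{\beta}_{Z_1}]$ is big on $Z_1$.

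First I would rewrite this identity. Since $Z$ is strongly $\mathbb{Q}$-factorial and $Z_1$ is strongly $\mathbb{Q}$-factorial by Lemma~\ref{q-factroial is preserved}, the class $K_{Z/Z_1}:=K_Z-\varphi^*K_{Z_1}$ is a well-defined $\mathbb{Q}$-line bundle class on $Z$, and the canonical bundle formula becomes
\[
B+\alpha=\varphi^*\bigl(B_{Z_1}+[\boldsymbol{\beta}_{Z_1}]\bigr)-K_{Z/Z_1}.
\]
Next I would check that $-K_{Z/Z_1}$ is $\varphi$-big. Since $\varphi$ is a Mori--Fano fibration, $-(K_Z+B)$ is $\varphi$-ample; hence for a sufficiently positive ample class $A$ on $Z_1$, the class $-K_{Z/Z_1}+\varphi^*(A-K_{Z_1})=-(K_Z+B)+\varphi^*A+B$ is ample plus effective, hence big, on $Z$. (Equivalently, on a general fiber $F$ one has $-K_{Z/Z_1}|_F=-K_F=-(K_F+B|_F)+B|_F$, which is big because $(F,B|_F)$ is klt with $-(K_F+B|_F)$ ample.) Thus the big class $B+\alpha$ on $Z$ is the sum of the pullback of $B_{Z_1}+[\boldsymbol{\beta}_{Z_1}]$ and the $\varphi$-big class $-K_{Z/Z_1}$, whose restriction to a general fiber is exactly $-K_F$.

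The conclusion is now precisely what \cite[Theorem~6.2]{hacon-paun} provides: as the general fibers of $\varphi$ are rationally connected, $(Z,B+\alpha)$ is generalized klt, and $K_Z+B+\alpha$ is $\varphi$-trivial, that theorem upgrades bigness of $B+\alpha$ on $Z$ to bigness of the generalized boundary-plus-moduli class $B_{Z_1}+[\boldsymbol{\beta}_{Z_1}]=B_{Z_1}+\varphi_*(\alpha)$ on $Z_1$. It is essential that the $\varphi$-big correction term is $-K_{Z/Z_1}$ rather than an arbitrary $\varphi$-big class: the naive implication fails in general (for instance, on $\mathbb{P}^1\times\mathbb{P}^1$ with $\varphi$ a projection, $\varphi^*\mathcal{O}_{\mathbb{P}^1}(-1)+\mathcal{O}(10,1)$ is big although $\mathcal{O}_{\mathbb{P}^1}(-1)$ is not), and it is exactly this cancellation that the adjunction defining $B_{Z_1}$ and $\boldsymbol{\beta}_{Z_1}$ encodes. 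If it is more convenient to invoke \cite{hacon-paun} in a divisorial formulation, one first passes to log resolutions of $(Z,B)$ and $(Z_1,B_{Z_1})$ dominating a resolution of $\varphi$ on which $\boldsymbol{\beta}$ is realized by a nef $(1,1)$-class with local potentials, applies the result there, and then descends bigness by the negativity lemma, which alters the relevant classes only by exceptional effective $\mathbb{R}$-divisors.

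I expect the main obstacle to be the bookkeeping needed to reconcile our transcendental, analytic setting --- Bott--Chern classes, the b-$(1,1)$-current $\boldsymbol{\beta}$, and the convention $[\boldsymbol{\beta}_{Z_1}]$, which already presupposes strong $\mathbb{Q}$-factoriality (cf.\ Lemma~\ref{q-factroial is preserved} and the discussion after Definition~\ref{def-numerical generalized pair}) --- with the precise hypotheses under which \cite[Theorem~6.2]{hacon-paun} is stated, and to verify that the generalized klt condition survives the auxiliary resolutions. By contrast, the geometric input --- rational connectedness of the general fibers of $\varphi$ and bigness of $-K$ on them --- is immediate from the definition of a Mori--Fano fibration.
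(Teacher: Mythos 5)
Your overall strategy is the paper's strategy: the proof of Lemma~\ref{lem-bigness if preserved under fano fibration} does rest on \cite[Theorem~6.2]{hacon-paun} applied to the identity $K_Z+B+\alpha=\varphi^*(K_{Z_1}+B_{Z_1}+\varphi_*(\alpha))$, rewritten so that the relative canonical class appears. However, the central assertion of your proposal --- that ``the conclusion is now precisely what \cite[Theorem~6.2]{hacon-paun} provides,'' i.e.\ that the theorem directly upgrades bigness of $B+\alpha$ on $Z$ to bigness of $B_{Z_1}+\varphi_*(\alpha)$ on $Z_1$ --- is not what that theorem says, and this is where the real work of the proof lives. Theorem~6.2 is a \emph{pseudoeffectivity} statement for a twisted relative canonical class of the form $K_{Z'/Z_1'}+D+\omega'-L$ on a log resolution, valid under two hypotheses you do not verify: that $D$ is an effective SNC $\mathbb{Q}$-divisor with $\lfloor D\rfloor=0$ together with a genuine K\"ahler class $\omega'$, and that the ``negative part'' $L$ has nonvanishing fiberwise sections. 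Producing such a decomposition from the hypothesis that $B+\alpha$ is big is a nontrivial step: one must pass to a log resolution of the generalized pair, where $\alpha$ acquires a negative exceptional part $B'_-$ and further exceptional corrections $E_\pm$, fold the Demailly decomposition $\nu^*(B+\alpha)=\omega+F$ into the boundary with a small parameter $\epsilon$ to keep the round-down zero, and check that the leftover $-\epsilon E_-$ is effective so that the fiberwise-section hypothesis holds. None of this is ``bookkeeping'' in the sense of being automatic; it is the body of the paper's proof.

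The second missing idea is the mechanism by which pseudoeffectivity becomes bigness. The paper first subtracts a small K\"ahler class $\omega_1$ pulled back from $Z_1$, so that Theorem~6.2 yields pseudoeffectivity of (the pullback of) $B_{Z_1}+\varphi_*(\alpha)-\omega_1$ up to exceptional divisors, and only then concludes that $B_{Z_1}+\varphi_*(\alpha)$ is big. Your proposal never isolates a K\"ahler class on the base, so even granting the pseudoeffectivity output you would only get that $B_{Z_1}+\varphi_*(\alpha)$ is pseudoeffective. Finally, the descent from the resolution $Z_1'$ back to $Z_1$ is not done by the negativity lemma, as you suggest, but by Lemma~\ref{lem--adding exceptional no effect}, applied twice: once to remove the $\nu$-exceptional term $\epsilon E_-$ (whose image in $Z$ need not be divisorial) and once to remove $\nu_1$-exceptional contributions whose image in $Z_1$ has codimension at least two. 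Your side remarks --- that $-K_{Z/Z_1}$ restricts to $-K_F$ on general fibers and that the cancellation encoded by the adjunction is essential --- are correct and consistent with the paper, but they do not substitute for these three steps.
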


\begin{proof}
    Taking a log resolution $\nu: Z_1'\rightarrow Z_1$ and then by eliminating of indeterminacy and taking further log resolutions, we can obtain the following commutative diagram of projective fibrations
    \begin{equation}
         \begin{tikzcd}
Z \arrow[d, "\varphi"'] & Z' \arrow[l, "\nu"'] \arrow[d, "\varphi'"] \\
Z_1 & Z_1' \arrow[l, "\nu_1"'].
\end{tikzcd}
    \end{equation}
  Since $(Z, B+\alpha)$ is a generalized pair, then by its definition, we know that we can write 
  \begin{equation}\label{eq--generalized klt relation}
      K_{Z'}+B'_{+}-B'_{-}+\alpha'=\nu^*(K_Z+B+\alpha)
  \end{equation}such that 
  \begin{itemize}
      \item $B'_{+}, B'_{-}$ are effective $\mathbb R$-divisors with no common components and have simple normal crossing support;
      \item  $\lfloor B'_{+}\rfloor=0$, i.e. coefficients of irreducible components in $B'_+$ is in $(0,1)$;
      \item $B'_-$ is $\nu$-exceptional;
      \item $\alpha'$ is a nef class.
  \end{itemize}

 Since \( B + \alpha \) is a big class, by taking further resolutions we may assume 
\(
\nu^*(B+\alpha) = \omega + F,
\)
where \( F \) is an effective \( \mathbb{Q} \)-divisor such that \( B'_+ + B'_- + F \) has simple normal crossing support and \( \omega \) is a Kähler class on $Z'$.  
Then combining with \eqref{eq--generalized klt relation}, we know that there exist effective $\nu$-exceptional \( \mathbb{Q} \)-divisors \( E_+ \) and \( E_- \) such that 
\begin{equation}
    \alpha' = -B'_+ + B'_- + \omega + F + E_+ - E_-.
\end{equation}
For \( 0 < \epsilon \ll 1 \), we can write
\begin{equation}\label{eq-decomposition on Z'}
    K_{Z'} + \bigl((1-\epsilon)B'_+ + \epsilon F + \epsilon B'_- + \epsilon E_+\bigr)
    + \bigl((1-\epsilon)\alpha' + \epsilon \omega\bigr) - \epsilon E_- 
    = \nu^*(K_Z + B + \alpha).
\end{equation}
Let 
\[
    D := (1-\epsilon)B'_+ + \epsilon F + \epsilon B'_- + \epsilon E_+\geq 0.
\]
We fix \( \epsilon\in \mathbb Q_{>0} \) sufficiently small so that 
\begin{equation}\label{eq--klt condition sec 2}
    \lfloor D \rfloor = 0.
\end{equation}
On \( Z_1' \), there exist an $\nu_1$-exceptional $\mathbb Q$-divisor $E_1$ such that
\begin{equation}\label{eq-decomposition on Z-1'}
    K_{Z_1'} = \nu_1^*(K_{Z_1}) + E_1.
\end{equation}
We can choose a Kähler class \( \omega_1 \) on \( Z_1 \) such that 
\[
\omega'=(1-\epsilon)\alpha' + \epsilon \omega - (\nu_1 \circ \varphi')^*(\omega_1)
\]
remains a Kähler class on \( Z' \).  
Then, by the definition of the pushforward of a class given in Definition~\ref{def--pushing forward for mori-fano fibration}, and combining the commutative diagram with \eqref{eq-decomposition on Z'} and \eqref{eq-decomposition on Z-1'}, we obtain
\begin{equation}\label{eq--to apply hacon-paun}
    K_{Z'} - (\varphi')^*(K_{Z_1'}) 
    + D + \omega'
    - \epsilon E_-= (\varphi')^*\bigl(\nu_1^*(B_{Z_1} + \varphi_*(\alpha) - \omega_1) - E_1\bigr).
\end{equation}
We can perturb the coefficients in \( D \)  to rational numbers, 
absorbing the small error term into the Kähler class 
\(
(1-\epsilon)\alpha' + \epsilon \omega - (\nu_1 \circ \varphi')^*(\omega_1),
\)
so that \( D \)  may be assumed to be effective \( \mathbb{Q} \)-divisors with simple normal crossing support.  
Let \( L \) denote the \( \mathbb{Q} \)-line bundle defined by the effective \( \mathbb{Q} \)-divisor \( \epsilon E_- \).  
Then, for sufficiently large and divisible \( m \) and for a general point \( p \in Z_1' \), we have 
\begin{equation}\label{eq-nonvanishing of sections}
    H^0(Z_p',\, mL|_{Z'_p}) \neq 0.
\end{equation}
Hence, by \eqref{eq--klt condition sec 2} and \eqref{eq-nonvanishing of sections}, the decomposition in \eqref{eq--to apply hacon-paun} satisfies the conditions of \cite[Theorem~6.2]{hacon-paun}, 
and applying this result we obtain that the following class is pseudo-effective on \( Z' \):
\begin{equation}
    (\varphi')^*\bigl(\nu_1^*(B_{Z_1} + \varphi_*(\alpha) - \omega_1) - E_1\bigr) + \epsilon E_-.
\end{equation}

Since \( E_- \) is \( \nu \)-exceptional, 
by Lemma~\ref{lem--adding exceptional no effect} we deduce that 
\[
    \varphi^*(B_{Z_1} + \varphi_*(\alpha) - \omega_1) - \nu_*((\varphi')^*E_1)
\]
is a pseudo-effective class on \( Z \).  
As \( E_1 \) is \( \nu_1 \)-exceptional, the image 
\( \varphi(\operatorname{Supp}(\nu_*((\varphi')^*E_1))) \) 
has codimension at least two in \( Z_1 \). Then by Lemma~\ref{lem--adding exceptional no effect} again, 
the class \( B_{Z_1} + \varphi_*(\alpha) - \omega_1 \) is pseudo-effective on \( Z_1 \).  
Hence \( B_{Z_1} + \varphi_*(\alpha) \) is a big class.
\end{proof}

\subsection{Relative MMP with scaling of a relatively big $\mathbb R$-line bundle}Recall that by a relative \( \mathbb{R} \)-line bundle we mean an element of \( N^1(Z/Y) \); 
that is, we always identify it with its numerical equivalence class in \( N^1(Z/Y) \).  
In the next section, we will need to run a relative MMP with scaling of a relatively big \( \mathbb{R} \)-line bundle \( \alpha \in N^1(Z/Y) \).  
Since most results in the literature are formulated for the scaling of \emph{divisors}, 
we provide a brief justification for carrying out the relative MMP with scaling of a relatively big \( \mathbb{R} \)-line bundle; 
see also \cite[Theorem~2.10]{claudon-horing}.  
For our purposes, we are only concerned with the case where \( K_Z + B \) is not relatively pseudo-effective, 
and we therefore state the following specialized version.
\begin{theorem}[\cite{nakayama1987,fujino-cone,fujino2022,Das_HP2024}]\label{thm-relative mmp}

Let \( f: Z \rightarrow Y \) be a projective fibration between normal compact Kähler spaces. 
Suppose that \( Z \) is \(\mathbb{Q}\)-factorial and that \((Z,B)\) is a klt pair. 
Let \( \alpha \) be a relatively big \(\mathbb{R}\)-line bundle on \( Z \) such that \( K_Z + B +  \alpha \) is $f$-nef. Suppose $K_Z+B$ is not $f$-pseudo-effective, then we can then run the \((K_Z + B)\)-MMP with scaling of \(\alpha\) over \(Y\) to get a Mori-Fano fibration. 
More precisely, there exists a finite sequence of flips and divisorial contractions $\{\varphi_{i}\}_{i=0}^{m-1}$ which are induced by contracting $K_{Z_i}+B_i$-negative extremal rays
\[
(Z,B) = (Z_0, B_0)  \xdashrightarrow{\varphi_0}
(Z_1, B_1)
\xdashrightarrow{\varphi_1} \cdots 
\xdashrightarrow{\varphi_{m-1}} (Z_m, B_m)\xrightarrow{\psi} W,
\]
where \( B_{i+1} := (\varphi_i)_* B_i \) for every \( i \ge 0 \). Moreover let $\alpha_0=\alpha$ and $\alpha_{i+1}=(\varphi_i)_*(\alpha_i)$, then there exists positive decreasing numbers
  $  1\geq \lambda_0\geq \lambda_1\cdots \geq \lambda_m>0
$ such that 
\begin{itemize}
    \item $\lambda_i=\inf\{\lambda\in [0, \lambda_{i-1}] \mid K_{Z_{i}}+B_i+\lambda \alpha_i \text{ is nef over } Y\}$ with the convention that $\lambda_{-1}=1$.
    \item $\psi$ is a Mori-Fano fibration. 
\end{itemize}
\end{theorem}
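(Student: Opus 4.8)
The plan is to reduce the statement to the standard relative MMP with scaling of a \emph{divisor}, which is available in \cite{fujino2022, fujino-cone, Das_HP2024}, and to handle the passage from a relatively big $\mathbb{R}$-line bundle $\alpha$ to a genuine $\mathbb{R}$-divisor via a Kodaira-type lemma in the relative setting. First I would invoke the relative Kodaira lemma: since $\alpha\in N^1(Z/Y)$ is $f$-big, there exists an $f$-ample $\mathbb{R}$-line bundle $A$ and an effective $\mathbb{R}$-Cartier divisor $D$ on $Z$ such that $\alpha\equiv A+D$ in $N^1(Z/Y)$. Perturbing $A$ and $D$ slightly (using that $f$-ampleness is an open condition in $N^1(Z/Y)$, which is finite-dimensional by \cite[Proposition~4.3]{nakayama1987}), we may take $D$ to be a $\mathbb{Q}$-divisor and arrange that $(Z, B+\epsilon D)$ is still klt for small $\epsilon$ and that $K_Z+B+(1-\epsilon)\alpha+\epsilon(A+D)$ remains $f$-nef (it equals $K_Z+B+\alpha$ modulo a small $f$-ample correction). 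The point is that running the $(K_Z+B)$-MMP with scaling of $\alpha$ over $Y$ produces, at each stage, the same extremal contractions as running it with scaling of $A+D$, because the two scaling classes are numerically equivalent over $Y$; thus the nef thresholds $\lambda_i$ agree and the MMP steps are literally the same.

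Next I would run the standard relative $(K_Z+B)$-MMP with scaling of the relatively big and relatively nef $\mathbb{R}$-divisor $A+D$ over $Y$, as established in \cite[Theorem~1.1]{Das_HP2024} (building on \cite{fujino2022, fujino-cone} and \cite{BCHM}). This yields the sequence of flips and divisorial contractions $(Z_i,B_i)\dashrightarrow(Z_{i+1},B_{i+1})$ together with the decreasing thresholds $\lambda_i=\inf\{\lambda\in[0,\lambda_{i-1}]\mid K_{Z_i}+B_i+\lambda\alpha_i\text{ is $f$-nef}\}$; each $Z_i$ is $\mathbb{Q}$-factorial klt and admits a projective fibration over $Y$ by \cite[Lemma~4.4]{fujiki1978} and Lemma~\ref{q-factroial is preserved}. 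The termination of this sequence of flips and divisorial contractions is exactly where the relative BCHM machinery of \cite{Das_HP2024, fujino2022} is used: since $\alpha$ is $f$-big, the scaled MMP terminates. Because $K_Z+B$ is assumed \emph{not} $f$-pseudoeffective, the MMP cannot terminate with a relatively nef $(K_{Z_m}+B_m)$; hence the final step must be a $(K_{Z_m}+B_m)$-negative extremal contraction $\psi\colon Z_m\to W$ with $\dim W<\dim Z_m$, i.e.\ a Mori–Fano fibration, and all $\lambda_i>0$ since the process did not stop earlier. Positivity of $\lambda_m$ follows because at the last flip/divisorial step we have $K_{Z_m}+B_m+\lambda_m\alpha_m$ $f$-nef but $K_{Z_m}+B_m$ not $f$-nef.

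I would also need to check that the pushforwards $\alpha_{i+1}=(\varphi_i)_*(\alpha_i)$ are well-defined as elements of $N^1(Z_{i+1}/Y)$ and remain $f$-big: this uses the bimeromorphic pushforward from Section~\ref{sec:preliminary}–(4) together with Lemma~\ref{lem--bigness preserved under pushforward} (whose proof works verbatim for the relative numerical classes, since $\lambda_{\alpha_i}^{-1}\ge\lambda_i>0$ at each step), and the compatibility remark in Definition~\ref{def-pushforward of numerical class} guaranteeing that pushing forward the numerical class of $A+D$ agrees with the numerical class of $(\varphi_i)_*(A+D)$. The main obstacle I anticipate is purely bookkeeping rather than conceptual: one must verify that the $f$-nef thresholds are genuinely attained (so that each $\lambda_i$ is realized by an actual extremal ray), which requires the relative cone theorem \cite[Theorem~4.12]{nakayama1987} together with the rationality/discreteness properties of extremal rays in the finite-dimensional space $N^1(Z_i/Y)$; and one must confirm that replacing $\alpha$ by a numerically equivalent $\mathbb{Q}$-divisor does not disturb the klt hypothesis along the whole MMP, which follows because klt is an open condition and only finitely many pairs $(Z_i,B_i)$ occur. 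No genuinely new idea beyond \cite{Das_HP2024} is required; the theorem is a packaging of their results adapted to scaling by a relatively big line bundle.
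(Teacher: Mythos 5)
Your overall strategy---reduce scaling by the transcendental class $\alpha$ to scaling by a divisor so that the existing relative MMP literature applies---is the right instinct, and several of your steps (existence of extremal rays realizing the thresholds via the relative cone theorem, the argument that the process cannot end with $K_{Z_m}+B_m$ relatively nef because $K_Z+B$ is not $f$-pseudoeffective, preservation of bigness under pushforward) match the paper. However, there is a genuine gap at the very first step: the ``relative Kodaira lemma'' you invoke, producing a \emph{global} decomposition $\alpha\equiv A+D$ in $N^1(Z/Y)$ with $A$ $f$-ample and $D$ an effective $\mathbb{R}$-Cartier divisor on all of $Z$, is not available over a compact K\"ahler base. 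The usual proof of the relative Kodaira lemma produces a nonvanishing of $f_*\mathcal{O}_Z(mL-A)$ as a sheaf on $Y$, which yields an effective divisor only when $Y$ is Stein (or projective); over a compact base the relevant sheaf may have no global sections, and more fundamentally an $f$-big transcendental class on a compact K\"ahler space need not contain any effective divisor at all (a big class on a compact K\"ahler manifold decomposes as K\"ahler plus effective divisor only after passing to a resolution, per Demailly regularization, and even then the K\"ahler part lives on the resolution, not on $Z$). Since your entire reduction to \cite{Das_HP2024} rests on this decomposition, the proof as written does not go through.

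The paper's actual argument circumvents this in two ways that you would need to supply. First, the construction of each step does not use any divisorial representative of $\alpha$: the extremal ray and contraction come directly from the relative cone theorem applied to $K_{Z_k}+B_k$, and the pushforward of the transcendental class is defined via the machinery of Definition~\ref{def-pushforward of numerical class}. Second, and crucially, the termination argument localizes: one first shows $\lim_{i}\lambda_i>0$ using that $K_Z+B$ is not $f$-pseudoeffective (via Lemma~\ref{lem--more effective before contraction}, $K_Z+B+\lambda_i\alpha$ stays $f$-pseudoeffective, so $\lambda_i\to 0$ would force $K_Z+B$ to be $f$-pseudoeffective); then one covers $Y$ by finitely many \emph{Stein} opens $U_j$, over which the relative Kodaira lemma \emph{does} hold so that $\alpha|_{f^{-1}(U_j)}$ is represented by a relatively big $\mathbb{R}$-divisor, applies finiteness of models \cite[Theorem~E]{fujino2022} over each $U_j$, and concludes via the strict increase of discrepancies over flipped loci that only finitely many bimeromorphic steps occur. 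If you replace your global Kodaira decomposition by this Stein localization in the termination step (and keep the rest of your argument for the individual MMP steps), your proof becomes essentially the paper's.
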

\begin{proof}
   Suppose that the sequences \( \{(Z_i, B_i)\}_{i=0}^{k} \), \( \{\varphi_i\}_{i=0}^{k-1} \), and \( \{\lambda_i\}_{i=0}^{k-1} \) have been constructed satisfying the properties stated in the theorem.  
We now construct \( Z_{k+1} \), \( \varphi_k \), and \( \lambda_k \).  
We may assume \( \dim Z_k = \dim Z_0 \); otherwise, we have already obtained a Mori–Fano fibration \( \varphi_{k-1} \colon Z_{k-1} \to Z_k \), and the process terminates.  
Define
\begin{equation}
    \lambda_k = \inf \left\{ t \in [0, \lambda_{k-1}] \,\middle|\, K_{Z_k} + B_k + t \alpha_k \text{ is nef over } Y \right\}.
\end{equation} By \cite[Theorem~1.2~(7)]{fujino-cone},  there exists a \( (K_{Z_k} + B_k) \)-negative extremal ray \( R \subset \overline{\mathrm{NE}}(Z_k/Y) \) such that  
\begin{equation}
    R \cdot (K_{Z_k} + B_k + \lambda_k \alpha_k) = 0.
\end{equation}
We then obtain an extremal contraction over \( Y \), defined by \( R \) as guaranteed by \cite[Theorem~1.2]{fujino-cone},  
\[
\varphi' \colon Z_k \longrightarrow Z'.
\]

If $\dim Z'<\dim Z_k$, then we let $W=Z'$ and $\psi=\varphi'$. Then we get the desired Mori-Fano fibration and we stop.
% Since $\lambda_0>1$, we know that 
% \begin{equation}
% K_{X_0}+B_0+\lambda_0 L_0\equiv_{Y_0} K_{X_0}+\mu^*(\omega_0)+(\lambda_0-1)L_0\geq \mu^*(K_X+\omega)+(\lambda_0-1)L_0
% \end{equation} is big over $Y_0$, therefore the extremal contraction has to be bimeromorphic 

If $\varphi'$ is a divisorial contraction, then we let $Z_{k+1}=Z'$ $\varphi_k=\varphi'$,  $B_{k+1}:=(\varphi')_*(B_k)$ and $\alpha_{k+1}=(\varphi_k)_*(\alpha_k)$ as defined in Definition \ref{def-pushforward of numerical class}.

 If $\varphi'$ is a flip contraction,  then  $K_{Z_k}+B_k$-flips exists by \cite[Theorem 1.14]{fujino2022}. Note that this  is established via the finite generation of the $\mathcal O_{Z'}$-algebra \begin{equation}
\bigoplus_{m \in \mathbb{N}} \varphi'_* \mathcal{O}\left(\left\lfloor m\left(K_{Z_k}+B_k\right)\right\rfloor\right)
\end{equation}which can be proved locally \cite[Theorem 1.8]{fujino2022}. Then we let $\varphi_{k}:Z_k\dashrightarrow Z_{k+1}$ be the flip of $\varphi'$ and let $B_{k+1}:=(\varphi_k)_*B_k$ and $\alpha_{k+1}=(\varphi_k)_*(\alpha_k)$.

We now show that, after finitely many steps of divisorial contractions and flips, the process terminates with a Mori–Fano fibration.  
We argue by contradiction.  
Suppose there exists an infinite sequence of bimeromorphic maps \( \varphi_i \) satisfying the conditions of the theorem.  
We first show that, as a consequence of the relative non-pseudoeffectivity of \( K_Z + B \), we have  
\begin{equation}
    \lim_{i \to \infty} \lambda_i > 0.
\end{equation} 
Repeating Lemma \ref{lem--more effective before contraction} for each divisorial contraction and flip, we can obtain that  for each \( i \), \( K_Z + B + \lambda_i \alpha \) is \( f \)-pseudoeffective.  
If \( \lim_{i \to \infty} \lambda_i = 0 \), then \( K_Z + B \) would be \( f \)-pseudoeffective, contradicting our assumption.

By Lemma~\ref{lem--bigness preserved under pushforward}, each \( \alpha_i \) is relatively big over \( Y \).  
Cover \( Y \) by finitely many open sets \( \{U_j\} \), each is Stein and restrict the morphisms over \( U_j \).  
Since each \( U_j \) is Stein, the numerical class \( \alpha \) restricted to \( f^{-1}(U_j) \) can be represented by a relatively big \( \mathbb{R} \)-divisor.  
Hence, by the finiteness of models \cite[Theorem~E]{fujino2022}, for each \( j \), the restrictions \( Z_i|_{U_j} \) belong to only finitely many isomorphism classes.  
Moreover, by the negativity lemma, the discrepancy of any divisor over \( Z \) whose center lies in the flipped locus must \emph{strictly} increase.  
Thus, for sufficiently large \( i \), if the flipped locus were nonempty, we would reach a contradiction.  
On the other hand, if there were infinitely many \( \varphi_i \), the flipped locus could not be nonempty.  
We therefore conclude that only finitely many \( \varphi_i \) can be bimeromorphic.
\end{proof}

The following property is well known to experts and is the analytic analogue of the fact that discrepancies can only increase along the MMP.

\begin{lemma}\label{lem-generalized klt is preserved}
In the same setting as Theorem~\ref{thm-relative mmp}, suppose further that  \( \alpha \in H^{1,1}_{\BC}(Z) \) such that \( (Z, B + \alpha) \) is a generalized klt pair.  
Then each \( (Z_i, B_i + \lambda_i\alpha_i) \) is a generalized klt pair.
\end{lemma}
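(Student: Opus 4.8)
\emph{Strategy.} The plan is to argue by induction on $i$, with inductive hypothesis that $(Z_i,B_i)$ is klt and that $(Z_i,B_i+\lambda_i\alpha_i)$ is a generalized klt pair. Recall that the maps $\varphi_0,\dots,\varphi_{m-1}$ are all divisorial contractions or flips, so only the bimeromorphic pushforward of Definition~\ref{def-pushforward of numerical class} is involved. I would use two ingredients: a convexity property, which will let me shrink the scaling parameter from $\lambda_i$ to $\lambda_{i+1}$; and the observation that the pushforward of the adjoint class was defined precisely so that the generalized discrepancy divisor cannot drop.

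\emph{Convexity.} First I would record the following: if $(Z',B')$ is klt and $(Z',B'+s\alpha')$ is generalized klt for some $s>0$, with nef $b$-part $\boldsymbol\gamma$ (so $s\alpha'=[\boldsymbol\gamma_{Z'}]$), then for every $t\in[0,s]$ the pair $(Z',B'+t\alpha')$ is generalized klt, with nef $b$-part $\tfrac{t}{s}\boldsymbol\gamma$. Indeed, on any sufficiently high model $p\colon W\to Z'$ to which $\boldsymbol\gamma$ descends, the defining relation $K_W+\Delta^{(t)}+\tfrac{t}{s}[\boldsymbol\gamma_W]=p^*(K_{Z'}+B'+t\alpha')$ shows that $\Delta^{(t)}$ depends affinely on $t/s$, whence $\Delta^{(t)}=(1-\tfrac{t}{s})\Delta^{(0)}+\tfrac{t}{s}\Delta^{(s)}$, where $\Delta^{(0)}$ is the discrepancy divisor of $(Z',B')$ and $\Delta^{(s)}$ the generalized discrepancy divisor of $(Z',B'+s\alpha')$. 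All coefficients of $\Delta^{(0)}$ and $\Delta^{(s)}$ are $<1$, hence so are those of $\Delta^{(t)}$ for $t\in[0,s]$.

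\emph{Induction.} The base case $i=0$ would follow from convexity applied to $(Z,B)$ with $s=1$ and $t=\lambda_0\le 1$. For the inductive step, assume the claim for $i$. Since $K_{Z_i}+B_i+\lambda_i\alpha_i$ is trivial on the $(K_{Z_i}+B_i)$-negative extremal ray contracted by $\varphi_i$, we have $\lambda_{\alpha_i}=\lambda_i^{-1}$, and Lemma~\ref{lem--more effective before contraction} at the endpoint $t=\lambda_{\alpha_i}^{-1}=\lambda_i$ — where its exceptional error divisor vanishes — gives a common resolution $\nu\colon Z'\to Z_i$, $\nu_1\colon Z'\to Z_{i+1}$ with
\begin{equation*}
\nu^*(K_{Z_i}+B_i+\lambda_i\alpha_i)=\nu_1^*(K_{Z_{i+1}}+B_{i+1}+\lambda_i\alpha_{i+1}).
\end{equation*}
Using that the class-level pushforward agrees with taking the trace on $Z_{i+1}$ of the \emph{same} nef $b$-current $\boldsymbol\gamma$ underlying $(Z_i,B_i+\lambda_i\alpha_i)$ (cf.\ \cite[Lemma~2.14]{dhy}), I would pass to a sufficiently high model $W$ lying over $Z'$, hence over both $Z_i$ and $Z_{i+1}$, to which $\boldsymbol\gamma$ descends, with morphisms $p\colon W\to Z_i$ and $q\colon W\to Z_{i+1}$, and subtract there the two defining relations $K_W+\Delta_i+[\boldsymbol\gamma_W]=p^*(K_{Z_i}+B_i+\lambda_i\alpha_i)$ and $K_W+\Delta_{i+1}+[\boldsymbol\gamma_W]=q^*(K_{Z_{i+1}}+B_{i+1}+\lambda_i\alpha_{i+1})$ for the generalized discrepancy divisors. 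The common transcendental term $[\boldsymbol\gamma_W]$ cancels, so $\Delta_i=\Delta_{i+1}$ as $\mathbb R$-divisors on $W$. Since $W$ may be chosen to contain any prescribed prime divisor over $Z_{i+1}$, the inductive hypothesis that all coefficients of $\Delta_i$ are $<1$ shows that $(Z_{i+1},B_{i+1}+\lambda_i\alpha_{i+1})$ is generalized klt. Then $(Z_{i+1},B_{i+1})$ is klt by \cite[Lemma~2.13]{dhy}, and convexity applied on $Z_{i+1}$ with $s=\lambda_i$ and $t=\lambda_{i+1}\le\lambda_i$ yields that $(Z_{i+1},B_{i+1}+\lambda_{i+1}\alpha_{i+1})$ is generalized klt. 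This would close the induction.

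\emph{Main obstacle.} The hard part is the bookkeeping behind the cancellation: one must verify that the pushforward of $H^{1,1}_{\BC}$-classes in Definition~\ref{def-pushforward of numerical class} is genuinely compatible with the trace of the nef $b$-current part of the generalized pair — so the same $\boldsymbol\gamma$ really represents $\lambda_i\alpha_{i+1}$ on $Z_{i+1}$ — and that the exceptional classes such as $p^*[\boldsymbol\gamma_{Z_i}]-[\boldsymbol\gamma_W]$ are represented by effective exceptional $\mathbb R$-divisors, so that the $\Delta$'s are genuine $\mathbb R$-divisors and the comparison above is an equality of divisors and not merely of cohomology classes. This is exactly where $\mathbb Q$-factoriality and the negativity lemma enter, as in the discussion following Definition~\ref{def-numerical generalized pair}. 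The flip case, where neither $Z_i$ nor $Z_{i+1}$ dominates the other, causes no additional trouble once one works on the common model $W$.
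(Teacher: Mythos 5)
Your proposal is correct and follows essentially the same route as the paper: both arguments rest on the convexity/interpolation of the generalized klt condition in the scaling parameter (proved via the negativity lemma applied to $E=\nu^*(\alpha)-\alpha'\ge 0$) together with Lemma~\ref{lem--more effective before contraction} to transfer the condition across each divisorial contraction and flip. The only cosmetic difference is that you transfer via the crepant identity at the endpoint $t=\lambda_i$ and then apply convexity on $Z_{i+1}$, whereas the paper uses the effective exceptional divisor $E_t\ge 0$ for all $t\in[0,\lambda_{\alpha}^{-1}]$ directly; these are the same computation.
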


\begin{proof}
By examining each divisorial contraction and flip, it suffices to show that \( (Z_1, B_1 + \lambda_1 \alpha_1) \) is a generalized klt pair.  
By Lemma~\ref{lem--more effective before contraction} and the definition of generalized klt pairs in \cite[Definition~2.9]{dhy}, it is enough to prove that \( (Z, B + t \alpha) \) is a generalized klt pair for every \( t \in [0,1] \).

For this, recall that there exists a bimeromorphic morphism
\(
\nu : Z' \longrightarrow Z
\)
from a Kähler manifold \( Z' \), together with an \( \mathbb{R} \)-divisor \( B' \) with simple normal crossing support such that \( B = \nu_* B' \ge 0 \), and a nef class \( \alpha' \) on \( Z' \) satisfying
\begin{equation}
    K_{Z'} + B' + \alpha' = \nu^*(K_Z + B + \alpha),
    \qquad \lfloor B' \rfloor \leq  0.
\end{equation}
Then for any \( t \in [0,1] \), we can write
\begin{equation}
    K_{Z'} + B' - (1-t)E + t\alpha' = \nu^*(K_Z + B + t\alpha),
\end{equation}
where \( E \) is an exceptional \( \mathbb{R} \)-divisor given by
\[
E = -\nu^*(K_Z + B) + (K_{Z'} + B') = -\alpha' + \nu^*(\alpha).
\]
Since \( \alpha' \) is nef, the negativity lemma implies \( E \ge 0 \).  
Moreover, as \( \lfloor B' \rfloor \leq 0 \), we have
\begin{equation}
    \lfloor B' - (1-t)E \rfloor \leq 0.
\end{equation}
Hence \( (Z, B + t\alpha) \) is a generalized klt pair for all \( t \in [0,1] \).
\end{proof}

\section{A weak transcendental base-point freeness result}\label{sec-structur theorem}
In this section, we first prove Theorem~\ref{thm-weak basepointfree}, which provides a weaker bimeromorphic version of the transcendental base-point-freeness theorem.  
We then state a more general version for generalized klt pairs (see Theorem~\ref{thm--generalized klt version}) and discuss some consequences of these results.  
In particular, Proposition~\ref{prop-dim of base coincide with numerical} shows that the dimension of the base \( Y \) in Theorem~\ref{thm-weak basepointfree} coincides with the numerical dimension of the adjoint class \( K_X + \alpha \).  Corollary \ref{cor-numerical dim at most 3 is done} shows that the transcendental base-point freeness holds whenever the adjoint class has numerical dimension at most 3.
Moreover, in Proposition~\ref{prop-generalization of das-hacon 4.1}, we extend \cite[Theorem~4.1]{das-hacon2024} to the Kähler setting. 

% \begin{theorem}
%     Let $(X,\omega_0)$ be a compact K\"ahler manifolds with $\nef_X(\omega_0)\geq1$. Then there exists projective fibration $X\xlongleftarrow{\mu} Z\xlongrightarrow{f}Y$ between compact K\"ahler manifolds such that the general fibers of \( f \) are rationally connected, and the following properties hold:
%     \begin{itemize} 
%          \item $\mu$ is a composition of blow-ups along smooth holomorphic submanifolds and 
%          \begin{equation}
%              \mu^*([\omega_0+K_X])=f^*(\alpha_1)+D,
%          \end{equation} such that $ \alpha_1$ is a K\"ahler class and $D$ is an effective $\mathbb Q$-divisor on $Z$;
%          \item $f(\supp(\Exc(\mu)+D))\subsetneq Y$.
%     \end{itemize}
% \end{theorem}
 % If the transcendental base-point-freeness conjecture holds, then the lemmas holds as a consequence and one can again take $\mu$ to be just the identity. Therefore this result can be viewed as a weaker version of the transcendental base-point-freeness and this is enough to derive the diameter lower along the K\"ahler-Ricci flow.

\subsection{Proof of Theorem \ref{thm-weak basepointfree}}
We firstly prove the following result using a similar argument as in Lemma \ref{lem-bigness if preserved under fano fibration} relying on the pseudo-effectivity of the twisted relative canonical class proved in \cite[Theorem 6.2]{hacon-paun}.

\begin{lemma}\label{lem-not relatively psef}
    Let $f:Z\rightarrow Y$ be a projective fibration between compact normal  K\"ahler spaces with general fibers of $f$ being rationally connected. Suppose $(Z,B)$ is a $\mathbb Q$-factorial klt pair and $Y$ has canonical singularity. If $K_{Z}+B-f^*(K_Y)$ is not pseudo-effective, then $K_{Z}+B$ is not $f$-pseudo-effective. 
\end{lemma}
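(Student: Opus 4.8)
The plan is to argue by contraposition: I will assume \( K_Z + B \) is \( f \)-pseudo-effective and show that \( K_Z + B - f^*(K_Y) \) is pseudo-effective on \( Z \). First I would dispose of the case \( \dim Z = \dim Y \): then \( f \) is bimeromorphic, and since \( Y \) has canonical singularities the class \( K_Z + B - f^*(K_Y) \) is represented by an effective divisor, so the hypothesis that it fails to be pseudo-effective forces \( \dim Y < \dim Z \), with the general fiber \( F \) of \( f \) a positive-dimensional rationally connected manifold. From here the argument should parallel the proof of Lemma~\ref{lem-bigness if preserved under fano fibration}: reduce to a relatively big adjoint class by a Kähler perturbation, pass to a log resolution to present the data in the form required by the pseudo-effectivity theorem for twisted relative canonical classes \cite[Theorem~6.2]{hacon-paun}, apply it, and then remove the perturbation.

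For the reduction, I would fix an \( f \)-ample \( \mathbb{R} \)-line bundle \( A \) on \( Z \) (it exists since \( f \) is projective) and a Kähler class \( \omega_Y \) on \( Y \); then \( \omega_Z := A + N f^*(\omega_Y) \) is Kähler on \( Z \) for \( N \gg 0 \). For each \( \epsilon > 0 \) the class \( \epsilon\omega_Z \) is nef, so \( (Z, B + \epsilon\omega_Z) \) is a generalized klt pair by \cite[Remark~2.10]{dhy}, and since \( \epsilon\omega_Z - \epsilon N f^*(\omega_Y) = \epsilon A \) is \( f \)-big while \( K_Z + B \) is \( f \)-pseudo-effective, the class \( K_Z + B + \epsilon\omega_Z \) is \( f \)-big. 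Because the pseudo-effective cone is closed, it suffices to prove, for each fixed small \( \epsilon > 0 \), that \( K_Z + B + \epsilon\omega_Z - f^*(K_Y) \) is pseudo-effective; writing \( \alpha := \epsilon\omega_Z \), I am reduced to the situation where \( (Z, B + \alpha) \) is generalized klt and \( K_Z + B + \alpha \) is \( f \)-big.

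Next I would choose a common log resolution \( \nu\colon Z' \to Z \) and \( \nu_1\colon Z' \to Y' \) over a resolution \( \rho\colon Y' \to Y \), with \( Z' \) a smooth compact Kähler manifold, \( f\circ\nu = \rho\circ\nu_1 \), and all relevant divisors in simple normal crossing position. Three inputs get combined on \( Z' \): the generalized pair structure, giving \( K_{Z'} + B'_+ - B'_- + \alpha' = \nu^*(K_Z + B + \alpha) \) with \( B'_+ \geq 0 \) simple normal crossing, \( \lfloor B'_+\rfloor = 0 \), \( B'_- \geq 0 \) \( \nu \)-exceptional and \( \alpha' \) nef; the \( f \)-bigness, giving (after a further blow-up) \( \nu^*(K_Z + B + \alpha) + N'\nu_1^*(\beta_{Y'}) = \omega'' + F' \) with \( \omega'' \) Kähler on \( Z' \), \( F'\geq 0 \) and \( \beta_{Y'} \) big and nef on \( Y' \); and the canonical singularities of \( Y \), giving \( K_{Y'} = \rho^*(K_Y) + E_Y \) with \( E_Y\geq 0 \) \( \rho \)-exceptional. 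Combining these and absorbing the Kähler class \( \omega'' \) and the effective \( \nu \)-exceptional terms, I expect a decomposition
\begin{equation*}
    \nu^*\bigl(K_Z + B + \alpha - f^*(K_Y)\bigr) = \bigl(K_{Z'} - \nu_1^*(K_{Y'})\bigr) + D + \mathbf{M} - R + G,
\end{equation*}
with \( D\geq 0 \) simple normal crossing and \( \lfloor D\rfloor = 0 \), \( \mathbf{M} \) nef, \( G\geq 0 \), and \( R\geq 0 \) having nonzero sections along the general fiber of \( \nu_1 \) — the last point using that this general fiber is rationally connected and that the restriction to it of the first three terms is big, which comes from the \( f \)-bigness of \( K_Z + B + \alpha \). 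This is exactly the input of \cite[Theorem~6.2]{hacon-paun}, whose conclusion gives that \( (K_{Z'} - \nu_1^*(K_{Y'})) + D + \mathbf{M} \) is pseudo-effective on \( Z' \). Pushing forward by \( \nu \) adds only effective \( \nu \)-exceptional classes, and a final application of Lemma~\ref{lem--adding exceptional no effect} disposes of the residual exceptional contributions over \( Y \); thus \( K_Z + B + \alpha - f^*(K_Y) \) is pseudo-effective, and letting \( \epsilon\to 0 \) completes the argument.

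The main obstacle I anticipate is this middle step: arranging the resolution so that the data lands precisely in the hypotheses of \cite[Theorem~6.2]{hacon-paun} in the non-projective Kähler setting — carrying the Kähler perturbation and the canonical-singularity correction \( E_Y \) through both \( \nu \) and \( \nu_1 \), keeping track of the signs of all exceptional contributions, and verifying the generic-fiber condition on \( R \). The rational connectedness of the fibers enters twice — through \( R^i f_*\mathcal{O}_Z = 0 \), hence the identification \( N^1(Z/Y) = H^{1,1}_{\BC}(Z)/H^{1,1}_{\BC}(Y) \) that lets one pull back and push forward the relevant classes, and as a hypothesis of \cite[Theorem~6.2]{hacon-paun} — while the hypothesis that \( Y \) is canonical is what makes \( f^*(K_Y) \) meaningful and lets the \( Y \)-resolution discrepancy be absorbed.
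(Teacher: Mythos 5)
Your overall architecture coincides with the paper's: assume $K_Z+B$ is $f$-pseudo-effective, perturb by a small K\"ahler class so that the adjoint class becomes $f$-big, pass to a log resolution over a resolution of $Y$, feed the resulting decomposition into the pseudo-effectivity theorem for twisted relative canonical classes \cite[Theorem~6.2]{hacon-paun}, descend, and let the perturbation tend to zero. The canonical singularities of $Y$ and the klt condition on $(Z,B)$ enter exactly as you say. The preliminary reduction to $\dim Y<\dim Z$ is harmless but unnecessary.

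The genuine gap is in the central step: producing the object to which \cite[Theorem~6.2]{hacon-paun} can be applied. As used throughout the paper, that theorem requires an identity $K_{Z'}-\nu_1^*(K_{Y'})+D+\mathbf{M}=L+\nu_1^*(\beta)$ in which $L$ is an honest $\mathbb{Q}$-line bundle with $H^0(Z'_y,mL|_{Z'_y})\neq 0$ for general $y$. Your candidate $R$ is extracted from the K\"ahler-plus-effective decomposition $\nu^*(K_Z+B+\alpha)+N'\nu_1^*(\beta_{Y'})=\omega''+F'$, which strands the transcendental K\"ahler piece $\omega''$ on the line-bundle side of the identity: it cannot be absorbed into the nef part $\mathbf{M}$ without destroying the equality, and it is not a line bundle. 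Moreover, your justification that $R$ has fiberwise sections --- that the restriction of $K_{Z'/Y'}+D+\mathbf{M}$ to the general fiber is big --- conflates bigness of a transcendental $(1,1)$-class on a fiber with the existence of holomorphic sections of a globally defined $\mathbb{Q}$-line bundle over general fibers. The paper bridges precisely this gap by (i) using the identification $N^1(Z/Y)\cong H^{1,1}_{\BC}(Z)/H^{1,1}_{\BC}(Y)$ (valid since the fibers are rationally connected) to rewrite the $f$-big class $K_Z+B-f^*(K_Y)+\epsilon\alpha$ as $L_\epsilon+f^*(\gamma_\epsilon)$ with $L_\epsilon$ an $f$-big $\mathbb{R}$-line bundle --- you invoke this identification only for bookkeeping, not for this purpose --- and (ii) applying the H\"ormander $L^2$ argument of Lemma~\ref{lem-relative big line bundle has fiberwise sections} to produce fiberwise sections of $m\nu^*(L_\epsilon)$; the K\"ahler class then stays on the left as part of $D+\omega''$. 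A second, related defect is the sign of $R$ in your proposed identity $\nu^*(K_Z+B+\alpha-f^*(K_Y))=(K_{Z'}-\nu_1^*(K_{Y'}))+D+\mathbf{M}-R+G$: from pseudo-effectivity of $(K_{Z'}-\nu_1^*(K_{Y'}))+D+\mathbf{M}$ you only deduce that $\nu^*(K_Z+B+\alpha-f^*(K_Y))+R$ is pseudo-effective, and since $R$ (built from $F'$) is in general neither $\nu$-exceptional nor supported over a codimension-two subset of $Y$, neither pushforward by $\nu$ nor Lemma~\ref{lem--adding exceptional no effect} removes it. In the paper's version the only divisor subtracted at this stage is the $\nu$-exceptional $E_2$ and the only class added is the effective $(f')^*F$, which is why the descent closes.
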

\begin{proof}
   First, recall that since the general fibers of $f\colon Z\to Y$ are rationally connected, we have (see Section~\ref{sec:preliminary}, item (7)) the identification
\[
N^{1}(Z/Y)\;\cong\; H^{1,1}_{\BC}(Z)/H^{1,1}_{\BC}(Y).
\]
Therefore if $K_Z+B$ is $f$-pseudo-effective, then there exists a K\"ahler class $\alpha$ such that for any $\epsilon>0$ there exists  an $f$-big $\mathbb R$-line bundle $L_{\epsilon}$ and class $\gamma_{\epsilon}\in H^{1,1}_{\BC}(Y)$ such that  
\begin{equation}\label{eq--consequence of relative pseudo-eff}
     K_{Z}+B-f^*(K_Y)+\epsilon \alpha=L_{\epsilon}+f^*(\gamma_{\epsilon}).
\end{equation}Perturbing the coefficients of $L_{\epsilon}$, we may assume it is a $\mathbb Q$-line bundle. Taking log resolutions and we can get the following commutative diagram of projective fibrations
   \begin{equation}
         \begin{tikzcd}
Z \arrow[d, "f"'] & Z' \arrow[l, "\nu"'] \arrow[d, "f'"] \\
Y & Y' \arrow[l, "\mu"'].
\end{tikzcd}
    \end{equation}Then there exist $\nu$-exceptional effective $\mathbb Q$-divisors $E_1$ and $E_2$ with simple normal crossing support and $\mu$-exceptional $\mathbb Q$-divisor $F$ such that 
    \begin{equation}
    \begin{aligned}
             K_{Z'}+(\nu^{-1})_*B+E_1&=\nu^*(K_Z+B)+E_2,\\
             K_{Y'}&=\mu^*(K_Y)+F.
    \end{aligned}
    \end{equation}Since $(Z,B)$ is klt and $Y$ is canonical, we know that $\lfloor (\nu^{-1})_*B+E_1\rfloor=0$ and $F\geq 0$. Then by \eqref{eq--consequence of relative pseudo-eff}, we can write 
    \begin{equation}
        K_{Z'}+(\nu^{-1})_*B-(f')^*(K_{Y'})+E_1+\epsilon\nu^*(\alpha)=\nu^*(L_{\epsilon})+E_2+(f')^*(\mu^*(\gamma_{\epsilon})-F).
    \end{equation}
Then using that $\nu^*(\alpha)$ is big and nef, by taking further resolution, we can assume $$\nu^*(\alpha)=\omega'+F',$$ where $\omega'$ is a K\"ahler class on $Z'$ and $F'$ is an effective $\mathbb Q$-divisor such that $(\nu^{-1})_*B+E_1+F'$ has simple normal crossing support. Then we can take $\epsilon\in \mathbb Q_+$ sufficiently small 
such that 
\begin{equation}
    \lfloor (\nu^{-1})_*B+E_1+\epsilon F'\rfloor=0.
\end{equation} Perturbing the coefficients of $(\nu^{-1})_*B$ and $E_1$  to rational numbers and using the K\"ahler form $\omega'$ to absorb the error term, we obtain that there exists an effective $\mathbb Q$-divisor $D$ with simple normal crossing support and $\lfloor D\rfloor =0$ and an a K\"ahler class $\omega''$ on $Z'$ such that 
\begin{equation}
    K_{Z'}+(\nu^{-1})_*B-(f')^*(K_{Y'})+E_1+\epsilon\nu^*(\alpha)= K_{Z'}-(f')^*(K_{Y'})+D+\omega''
\end{equation}
Similarly, we can approximate \( E_2 \) by effective \( \mathbb{Q} \)-divisors and use \( \omega'' \) to absorb the error term.  
Since pseudo-effectivity is a closed property, we may therefore assume in the following that \( E_2 \) is an effective \( \mathbb{Q} \)-divisor.

Let $L=\nu^*(L_{\epsilon})+E_2$. Since $\nu^*(L_{\epsilon})$ is a relative big $\mathbb Q$-line bundle, by Lemma \ref{lem-relative big line bundle has fiberwise sections}, we know that for $m$ sufficiently divisible, $mL$ has non-empty fiberwise sections over a general point $y'\in Y'$. 
Then, by applying \cite[Theorem~6.2]{hacon-paun}, we obtain that
\begin{equation}
    K_{Z'} + (\nu^{-1})_* B - (f')^*(K_{Y'}) + E_1 + \epsilon \, \nu^*(\alpha)
\end{equation}
is a pseudoeffective class on \( Z' \). Note that, we have 
\begin{equation}
     K_{Z'} + (\nu^{-1})_* B - (f')^*(K_{Y'}) + E_1 + \epsilon \, \nu^*(\alpha)=\nu^*(K_Z+B-f^*(K_Y)+\epsilon \alpha)+E_2-(f')^*F.
\end{equation}
Since $E_2$ is $\nu$-exceptional and  $(f')^*F$ is effective, we get 
\begin{equation}
    K_Z+B-f^*(K_Y)+\epsilon \alpha
\end{equation}is a pseudo-effective class on $Z$. Let $\epsilon\rightarrow 0$, we obtain that $K_{Z}+B-f^*(K_Y)$ is pseudo-effective. A contradiction.

\end{proof}

\noindent\textit{Proof of Theorem \ref{thm-weak basepointfree}.}
Since we have assumed $K_X$ is not pseudo-effective, then by the result in \cite{ou2025}, $X$ is uniruled and hence one can consider its MRC fibration \cite{campana2004,KMM,campana92}. By the elimination of points of indeterminacy \cite[Theorem 2.1.24]{ma2007} and the results in \cite{claudon-horing}, we know that there exist projective fibrations
\begin{equation}
X\xlongleftarrow{\mu_0}X_0\xlongrightarrow{f_0} Y_0 
\end{equation} such that $\mu$ is a composition of a finite succession of blow-ups with smooth centered and $\pi_0$ is a model of the MRC fibration of $X$. In particular, the general fibers of $f_0$ are rationally connected. 

Although the following argument also applies when \( Y_0 \) is a point, in that case there is a more direct argument.  
Indeed, when \( Y_0 \) is a point, the compact K\"ahler manifold \( X \) is rationally connected, hence projective, and satisfies \( H^{2,0}(X) = 0 \).  
Therefore, \( \alpha \) is represented by a big and nef \( \mathbb{R} \)-divisor, and the desired property then follows from the existence of log terminal models for klt pairs with big boundary \cite[Theorem 1.2]{BCHM} and the base-point-freeness theorem for \( \mathbb{R} \)-divisors; see, for example, \cite[Theorem~3.9.1]{BCHM}; see also \cite[Exercise 5.10]{hacon-book}.

Since \( \alpha \) is big and nef, after replacing \( X \) by a suitable sequence of blow-ups, we may assume that
\[
\alpha_0:=\mu_0^*(\alpha) = B_0 + \omega_0,
\]
where \( B_0 \) is an effective \( \mathbb{Q} \)-divisor with simple normal crossing support and \( \lfloor B_0 \rfloor = 0 \), and \( \omega_0 \) is a Kähler class.  
In particular, the pair \( (X_0, B_0) \) is klt. We gather the following facts, which will be used repeatedly.
\begin{itemize}
  \item $K_{X_0}+B_0+\omega_0=K_{X_0}+\mu_0^*(\alpha)$ is not big, since $K_X+\alpha$ is not big.
  \item $K_{X_0}+B_0$ is not $f_0$-pseudo-effective. By Lemma~\ref{lem-not relatively psef}, this follows because $K_{X_0}+B_0$ is not pseudo-effective while $K_{Y_0}$ is pseudo-effective \cite[Theorem~1.1, Lemma~8.10]{ou2025}.
  \item $K_{X_0}+B_0+\omega_0$ is pseudo-effective, since it is the sum of the pullback of the pseudo-effective class $K_X+\alpha$ and an effective exceptional divisor.
  \item $\omega_0$ defines a relatively ample  $\mathbb R$-line bundle in $N^1(X_0/Y_0)$ since general fibers of $f_0$ are rationally connected; see the discussion in Section \ref{sec:preliminary}.
  \item $(X_0, B_0+\omega_0)$ a generalized klt pair since $\alpha_0$ is nef \cite[Remark 2.10]{dhy}.
\end{itemize}

We first run a relative \( (K_{X_0} + B_0) \)-MMP with scaling of \( \omega_0 \); see Theorem~\ref{thm-relative mmp}.  
Note that since \( X_0 \) is strongly \( \mathbb{Q} \)-factorial \cite[Lemma~2.3]{dh-3fold}, all the spaces appearing in the process remain strongly \( \mathbb{Q} \)-factorial by Lemma~\ref{q-factroial is preserved}.
This yields the following sequence
\begin{equation}
    X_0 \xdashrightarrow{p_0} X_{0,1} \xrightarrow[]{q_1} X_1,
\end{equation}
where \( p_0 \) is a composition of divisorial contractions and flips, and \( q_1 \) is a Mori–Fano fibration.  
On \( X_{0,1} \), we define \( B_{0,1} := (p_0)_*(B_0) \) and \( \alpha_{0,1} := (p_0)_*(\omega_0) \).  
There exists a constant \( \lambda_{0,1} > 0 \) such that \( K_{X_{0,1}} + B_{0,1} + \lambda_{0,1}\alpha_{0,1} \) is nef over \( Y_0 \) and \( q_1 \)-trivial.  
By Lemma~\ref{lem--bigness preserved under pushforward}, the class \( \alpha_{0,1} \) is big.  
We claim that
\begin{equation}
    \lambda_{0,1} \le 1.
\end{equation}
Indeed, by applying Lemma~\ref{lem--more effective before contraction} to each divisorial contraction and flip,  
we obtain a common resolution \( X' \) of \( X_0 \) and \( X_{0,1} \) such that for any \( t \in [0, \lambda_{0,1}] \),  
there exists an effective \( \nu_{0,1} \)-exceptional \( \mathbb{R} \)-divisor \( E_t \) satisfying
\begin{equation}\label{eq--more positivity before contraction}
   \nu_0^*(K_{X_0} + B_0 + t\omega_0)
   = \nu_{0,1}^*(K_{X_{0,1}} + B_{0,1} + t\alpha_{0,1}) + E_t.
\end{equation}
If \( \lambda_{0,1} \geq  1 \), then since \( E_1 \) is \( \nu_{0,1} \)-exceptional, we deduce that  
\( K_{X_{0,1}} + B_{0,1} + \alpha_{0,1} \) is pseudoeffective.  Then since $\alpha_{0,1}$ is big, we would have  \( K_{X_{0,1}} + B_{0,1} + \lambda_{0,1}\alpha_{0,1} \) is big if $\lambda_{0,1}>1$.
This leads to a contradiction, since  $K_{X_{0,1}} + B_{0,1} + \lambda_{0,1}\alpha_{0,1}$
is a pullback from a lower-dimensional Kähler space and hence not big.

By Lemma~\ref{lem-generalized klt is preserved}, the generalized pair \( (X_{0,1}, B_{0,1} + \lambda_{0,1} \alpha_{0,1}) \) remains generalized klt. 
Then, by the discussion in Section~\ref{sec-definiton of pushforward}, relying on \cite[Theorem~0.3]{hacon-paun}, there exists a strongly $\mathbb Q$-factorial generalized klt pair \( (X_1, B_1 + \alpha_1) \) such that 
\begin{equation}\label{change of class under first mori-fano fibration}
    K_{X_{0,1}} + B_{0,1} + \lambda_{0,1} \alpha_{0,1}
    = q_1^*(K_{X_1} + B_1 + \alpha_1).
\end{equation}
By Lemma \ref{lem-bigness if preserved under fano fibration}, we know that $B_1+\alpha_1$ is a  big class.
If $K_{X_1}+B_1+\alpha_1$ is big on $X_1$, then we stop. Otherwise, we know that $K_{X_1}$ is not pseudo-effective and we use Lemma \ref{lem-not relatively psef} to conclude that $K_{X_1}$ is not pseudo-effective over $Y_0$. 

Since $K_{X_1}+B_1+\alpha_1$ is relatively  nef over $Y_0$, we are able to continue running the relative MMP for $K_{X_1}$ with scaling of the big class $B_{1}+\alpha_1$ to get a Mori-Fano fibration after finite divisorial contractions and flips. Repeat this process, we get the following.
 For $k=1,\cdots, l$, there exists strongly $\mathbb Q$-factorial klt pairs  $(X_{k},B_{k})$ and $(X_{k-1,k},B_{k-1,k})$ pojective over $Y_0$ and bimeromorphic map $p_{k-1}:X_{k-1}\dashrightarrow X_{k-1,k}$, which is a composition of divisorial contractions and flips and $q_k:X_{k-1,k}\rightarrow X_{k}$ is a Mori-Fano fibration.  They satisfy the following property
  \begin{itemize}
      \item there exist positive real numbers 
      \begin{equation}
        1=\lambda_1 \geq \lambda_{2}\geq \cdots \geq \lambda_{l}      \end{equation}and define $B_{k-1,k}=(p_{k-1})_*(B_{k-1})$,  $\alpha_{k-1,k}=(p_{k-1})_*(\alpha_{k-1})$ such that 
        \begin{equation} \label{eq-change of class under mori-fano fibration} 
        K_{X_{k-1,k}}+\lambda_k(B_{k-1,k}+\alpha_{k-1,k})=q_k^*(K_{X_k}+\lambda_k(B_k+\alpha_k))\end{equation} is nef over $Y_0$ and it is $q_k$-trivial
  
      \item $B_i+\alpha_i$ is a big class for $i=1,\cdots, l$ by Lemma \ref{lem--bigness preserved under pushforward} and Lemma \ref{lem-bigness if preserved under fano fibration},  

      \item $K_{X_{i}}+\lambda_i(B_i+\alpha_i)$ is not big for $i=0,\cdots l-1$.
  \end{itemize}

Since each Mori–Fano fibration strictly decreases the dimension by one, we know that after finitely many steps, we arrive at one of the following situations:
\begin{enumerate}
    \item \( K_{X_l} + \lambda_l (B_l + \alpha_l) \) is big;
    \item \( \dim X_l = \dim Y_0 \).
\end{enumerate}
If case~(2) occurs, then using the fact that $Y_0$ is smooth and \( K_{Y_0} \) is pseudoeffective, we can conclude that \( K_{X_l} \) is also pseudoeffective, and hence \( K_{X_l} + \lambda_l (B_l + \alpha_l) \) is big.  
In summary, we always obtain that there exists a positive constant \( \lambda_l \le 1 \) such that
\begin{equation}
    K_{X_l} + \lambda_l (B_l + \alpha_l) \text{ is big.}
\end{equation}

Then we claim that 
\begin{equation}
    \lambda_l=\cdots=\lambda_2=\lambda_{0,1}=1.
\end{equation}
We firstly show $\lambda_l=\cdots=\lambda_2$. Since otherwise we can pick the largest $k<l$ such that that $\lambda_k>\lambda_l$. Then using Lemma \ref{lem--more effective before contraction} and tracing the change of the class $K_X+B+\alpha$ in each divisorial contrtaction, flip and Mori-Fano fibration, we get that $K_{X_i}+\lambda_l(B_i+\alpha_i)$ is pseudo-effective for all $i\geq k+1$ and moreover similar to \eqref{eq--more positivity before contraction}, we get that there exists a common resolution $X'$ of $X_k$ and $X_{k,k+1}$
such that 
\begin{equation}
    \nu_k^*(K_{X_k}+\lambda_l(B_k+\alpha_k))=\nu_{k,k+1}^*(K_{X_{k,k+1}}+\lambda_l(B_{k,k+1}+\alpha_{k,k+1}))+F
\end{equation}for some $\nu_{k,k+1}$-exceptional effective $\mathbb R$-divisor $F$. Then using \eqref{eq-change of class under mori-fano fibration} and the choice of $k$, we know that $K_{X_k}+\lambda_l(B_k+\alpha_k)$ is pseudo-effective. Since $B_k+\alpha_k$ is big and $\lambda_k>\lambda_l$, we would obtain that $ K_{X_k}+\lambda_k(B_k+\alpha_k)$ is indeed big. This contradicts with our choice of $l$. As a consequence of $\lambda_l=\cdots=\lambda_2$, similar argument as above shows that this number has to be 1 and we know that $K_{X_1}+B_1+\alpha_1$ is pseudo-effective. Then we can show that $\lambda_{0,1}=1$. If $\lambda_{0,1}<1$, then by \ref{eq--more positivity before contraction} and \ref{change of class under first mori-fano fibration}, this would imply $K_{X_0}+B_0+\omega_0$ is big, which contradicts with our initial assumption \eqref{eq--initial assumption}. 

Taking resolutions of $X_l$ and using Demailly's regularization theorem for the big class as mentioned in Section \ref{sec:preliminary}--(10), we know that after a further resolution, we get a bimeromorphic morphism $\pi$ such that there exists a K\"ahler class $\omega_Y$ and an effective $\mathbb Q$-divisor on $Y$ such that 
\begin{equation}
    \pi^*(K_{X_l}+B_l+\alpha_l)=\omega_Y+D_Y.
\end{equation}Then taking a common resolution of each $X_k$ and $X_{k,k+1}$ and using elimination of indeterminacy, we can get the following diagram
\begin{equation}
    \begin{tikzcd}[column sep=0.3em, row sep=normal]
&[1em]&   Z:=Z_0  \arrow[dl,"\nu_0"'] \arrow[dr,"\nu_{0,1}"]\arrow[rrr] &  &[1em]&Z_1  \arrow[dl] &\cdots &[1em]&   Z_{l-1}  \arrow[dl] \arrow[dr] \arrow[rr] &  &[1em]Z_l=:Y\arrow[d,"\pi"]\\
X &\arrow[l, "\mu_0"']  X_0 \arrow[rr, dashed] &                         & X_{0,1}\arrow[r]  & X_1& \cdots &\cdots & X_{l-1} \arrow[rr, dashed] &                         & X_{l-1,l}\arrow[r]  & X_l
\end{tikzcd}
\end{equation}

Let  $f:Z\rightarrow Y$ be the induced morphism from the composition $Z_{k-1}\rightarrow Z_k$ in the above commutative diagram and let $\mu:Z\rightarrow X$ be the composition of $\nu_0:Z\rightarrow X_0$ and $\mu_0$. Note that by the construction, each map $Z_{k-1}\rightarrow Z_k$ is bimeromorphic to a projective fibration, therefore itself is a Moishezon fibration i.e., there exists a relative big line bundle. Then by \cite[Theorem 1.1]{claudon-horing}, we know that it is a projective fibration and hence $f:Z\rightarrow Y$ is a projective fibration. All these morphisms are taken over \( Y_0 \), and since \( f_0 : X_0 \rightarrow Y_0 \) has rationally connected general fibers, the general fiber of \( f \) is also rationally connected.

By the construction, we obtain
\begin{equation}\label{eq--class realtion on nef model}
    \nu_{0,1}^*(K_{X_{0,1}} +B_{0,1}+ \alpha_{0,1}) = f^*(\omega_Y + D_Y).
\end{equation}
Moreover, there exist a \( \mu \)-exceptional \( \mathbb{R} \)-divisor \( E_1 \) and a \( \nu_{0,1} \)-exceptional \( \mathbb{R} \)-divisor \( E_2 \), having no common components, such that 
\begin{equation}\label{eq--class relation for X and x01}
    \mu^*(K_X + \alpha) + E_1
    = \nu_{0,1}^*(K_{X_{0,1}} +B_{0,1}+ \alpha_{0,1}) + E_2.
\end{equation}
Combining \eqref{eq--class relation for X and x01} with \eqref{eq--class realtion on nef model}, we obtain
\begin{equation}
    \mu^*(K_X + \alpha)
    = f^*(\omega_Y + D_Y) + E_2 - E_1.
\end{equation}
Applying the negativity lemma to $\mu:Z\rightarrow X$, we obtain that $f^*(D_Y)+E_2-E_1\geq 0$. Then we obtain the desired effective $\mathbb 
R$-divisor 
\begin{equation}
    D:=f^*(D_Y)+E_2-E_1.
\end{equation}
Moreover if $K_{X}+\alpha$ is nef, then applying the negativity lemma to $\nu_{0,1}:Z\rightarrow X_{0,1}$, we get that in \eqref{eq--class relation for X and x01}, we have $E_2=0$. Therefore in this case there exists a $\mu$-exceptional effective $\mathbb R$-divisor $E_1$ such that
\begin{equation}
    E_1+D=f^*(D_Y)
\end{equation}
\qed

\subsection{A general version and applications}
For later reference, we record the following more general version of Theorem~\ref{thm-weak basepointfree} and include a brief sketch of the proof, highlighting only the differences from the argument given above.

% \begin{remark}
%     Note that in particular, if the class $\alpha$ admits a K\"ahler current, whose Lelong number at each point is strictly less than 1. Then by Demailly's regularization theorem, $(X,\alpha)$ is a generalized klt pair.
% \end{remark}

\begin{theorem}\label{thm--generalized klt version}
    Let $(X, B+\alpha)$ be a compact K\"ahler strongly $\mathbb Q$-facotial generalized klt pair such that $B+\alpha$ is big. If $K_X+B+\alpha$ is pseudo-effective, then there exists projective fibration $X\xlongleftarrow{\mu} Z\xlongrightarrow{f}Y$ with $Z,\, Y$ being compact K\"ahler manifolds and general fibers of $f$ being rationally connected such that
         \begin{equation}
             \mu^*(K_X+B+\alpha)=f^*(\alpha_1)+D,
         \end{equation} where $ \alpha_1$ is a K\"ahler class and $D$ is an effective $\mathbb R$-divisor.
      Moreover if $K+B+\alpha$ is nef, then there exists an effective $\mathbb Q$-divisor $D_Y$ on $Y$ and an effective $\mu$-exceptional $\mathbb R$-divisor $E$ such that 
      \begin{equation}
          E+D=f^*(D_Y).
      \end{equation}
\end{theorem}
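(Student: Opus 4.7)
My plan is to follow the proof of Theorem \ref{thm-weak basepointfree} closely, with modifications to accommodate the generalized klt setting. First, if $K_X+B+\alpha$ is big, I would apply Demailly's regularization theorem (Section \ref{sec:preliminary}(10)) on a log resolution of $(X, B)$ and take $Y=Z$ with $f$ the identity. So I focus on the case where $K_X+B+\alpha$ is not big. Since $B+\alpha$ is big by hypothesis, $K_X$ cannot be pseudo-effective, so \cite{ou2025} together with elimination of indeterminacy of an MRC fibration \cite{claudon-horing} produces projective fibrations
\begin{equation*}
X \xlongleftarrow{\mu_0} X_0 \xlongrightarrow{f_0} Y_0,
\end{equation*}
with $\mu_0$ a composition of smooth blow-ups, $Y_0$ smooth with $K_{Y_0}$ pseudo-effective, and general fibers of $f_0$ rationally connected.

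The key adaptation lies in setting up the initial data on $X_0$ so that the relative MMP machinery applies in the same form as before. After further log resolutions that resolve the generalized pair structure $(X, B+\boldsymbol{\beta})$, pulling back gives a decomposition
\begin{equation*}
K_{X_0} + B_{0,+} - B_{0,-} + \alpha_0 = \mu_0^*(K_X + B + \alpha) \quad \text{in } H^{1,1}_{\BC}(X_0),
\end{equation*}
with $B_{0,\pm}$ effective $\mathbb{Q}$-divisors of SNC support and no common components, $\lfloor B_{0,+} \rfloor = 0$, $B_{0,-}$ being $\mu_0$-exceptional, and $\alpha_0$ nef. I then exploit the bigness of $B+\alpha$: after further blow-ups, I write $\mu_0^*(B+\alpha) = \omega_0' + D_0$ with $\omega_0'$ Kähler and $D_0$ an effective $\mathbb{Q}$-divisor, in SNC position with $B_{0,\pm}$. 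For a sufficiently small rational $\epsilon > 0$, setting
\begin{equation*}
B_0 := (1-\epsilon)B_{0,+} + \epsilon D_0, \qquad \omega_0 := (1-\epsilon)\alpha_0 + \epsilon \omega_0',
\end{equation*}
produces an effective $\mathbb{Q}$-divisor $B_0$ with $\lfloor B_0\rfloor = 0$ (so $(X_0, B_0)$ is klt), a Kähler class $\omega_0$ on $X_0$ (hence relatively ample over $Y_0$), and the relation
\begin{equation*}
K_{X_0} + B_0 + \omega_0 = \mu_0^*(K_X+B+\alpha) + E
\end{equation*}
for an effective $\mu_0$-exceptional $\mathbb{R}$-divisor $E$. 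This plays the role of the decomposition $K_{X_0} + B_0 + \omega_0 = \mu_0^*(K_X + \alpha)$ used in the proof of Theorem \ref{thm-weak basepointfree}; the exceptional term $E$ does not affect the descent of the class to the base, thanks to Lemma \ref{lem--adding exceptional no effect}.

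With this setup, the rest of the proof is a mechanical translation of the previous argument. I run the relative $(K_{X_0}+B_0)$-MMP with scaling of $\omega_0$ over $Y_0$ via Theorem \ref{thm-relative mmp}, producing a Mori–Fano fibration; by Lemma \ref{lem-bigness if preserved under fano fibration} the boundary class on the base remains big, so I iterate the procedure, each time running further relative MMPs with scaling of the big boundary class and appealing to Lemmas \ref{lem--bigness preserved under pushforward}, \ref{lem-bigness if preserved under fano fibration}, and \ref{lem-generalized klt is preserved} to propagate bigness and the generalized klt property along each divisorial contraction, flip, and Mori–Fano fibration. Since $K_{Y_0}$ is pseudo-effective and each Mori–Fano fibration strictly drops the dimension, the process terminates in a strongly $\mathbb{Q}$-factorial generalized klt pair whose adjoint class is big, and all scaling parameters must equal $1$ by the same argument as in the original proof (otherwise $K_X+B+\alpha$ would itself be big). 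Applying Demailly regularization on a resolution of the final space and tracing back through the assembled commutative diagram of common resolutions will then yield the projective fibration $Z \to Y$ and the decomposition $\mu^*(K_X+B+\alpha) = f^*(\alpha_1) + D$. The nef case then follows from the negativity lemma applied to $\mu$, exactly as in the original proof. I expect the principal obstacle to be tracking the extra exceptional contributions introduced by the perturbation step—in particular, verifying that they do not obstruct the pushforward operations of Section \ref{sec-definiton of pushforward} and do not spoil the crucial identities forcing $\lambda_l=\cdots=\lambda_{0,1}=1$ at termination.
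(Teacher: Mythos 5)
Your plan follows the paper's proof of this theorem essentially step by step (reduce to the non-big case, pass to the MRC fibration, convert the generalized boundary into a genuine klt pair plus a K\"ahler class using the bigness of $B+\alpha$, rerun the two-stage relative MMP, and trace back via the negativity lemma), so the architecture is sound. However, your specific perturbation $B_0:=(1-\epsilon)B_{0,+}+\epsilon D_0$ does not deliver the identity you claim. A direct computation from $K_{X_0}+B_{0,+}-B_{0,-}+\alpha_0=\mu_0^*(K_X+B+\alpha)$ gives
\begin{equation*}
K_{X_0}+B_0+\omega_0-\mu_0^*(K_X+B+\alpha)\;=\;(1-\epsilon)B_{0,-}+\epsilon\bigl(K_{X_0}-\mu_0^*(K_X)\bigr)\;=\;B_{0,-}+\epsilon E'-\epsilon P,
\end{equation*}
where $P$ is the $\mu_0$-exceptional part of $B_{0,+}$ and $E'=(\mu_0^*B-(\mu_0^{-1})_*B)+(\mu_0^*\alpha-\alpha_0)\ge 0$. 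Since $P$ and $B_{0,-}$ have no common components, this error term fails to be effective as soon as some exceptional divisor has generalized discrepancy in $(-1,0)$, i.e.\ whenever $(X,B+\alpha)$ is generalized klt but not generalized canonical (for instance $B=0$ and $\alpha$ a K\"ahler class descending from $X$, so $E'=0$ but $P\neq 0$). This is not a cosmetic issue: the effectivity of the error is exactly what makes $K_{X_0}+B_0+\omega_0$ pseudo-effective and non-big simultaneously, which is what forces all the scaling constants $\lambda_{0,1}=\cdots=\lambda_l$ to equal $1$ at the end; without it the final identity $\mu^*(K_X+B+\alpha)=f^*(\alpha_1)+D$ is lost. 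The repair is the one the paper uses: rescale only the strict transform of $B$ and leave the exceptional components of the boundary untouched, i.e.\ take $B_0:=B_{0,+}-\epsilon(\mu_0^{-1})_*B+\epsilon D_0$, which still satisfies $\lfloor B_0\rfloor=0$ for small rational $\epsilon$ and yields the error $B_{0,-}+\epsilon E'\ge 0$. With that single change, the remainder of your proposal coincides with the paper's argument.
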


\begin{proof}
     It is clear from the proof above that the bigness and nefness of \( \alpha \) are used only at the initial stage, to ensure that after taking a resolution we obtain a generalized klt pair whose boundary class is Kähler, allowing us to start the relative MMP process.  
Therefore, by a standard argument (see for example \cite[Lemmas~3.1 and~3.4]{das-hacon2024}), this condition on \( \alpha \) can be replaced by the weaker assumption that \( B+\alpha \) is big and \( (X, B+\alpha) \) is a generalized klt pair \cite{dhy, hacon-paun}.  

For the reader’s convenience, we recall some details here.  
By the definition of a generalized klt pair, there exists a bimeromorphic morphism
\(
\nu : X' \rightarrow X
\)
from a Kähler manifold \( X' \), and an \( \mathbb{R} \)-divisor \( B' \) with simple normal crossing support such that \( B = \nu_* B' \ge 0 \), together with a nef class \( \alpha' \) on \( X' \) satisfying
\begin{equation}
    K_{X'} + B' + \alpha' = \nu^*(K_X + B + \alpha),
    \qquad \lfloor B' \rfloor \leq  0.
\end{equation}
     
     Then under this assumption, we can write $B'=B'_+-B'_-$ and there is an effective $\nu$-exceptional $\mathbb R$-divisor $E$ such that 
    \begin{equation}
        \alpha'=\nu^*(B+\alpha)-(\nu^{-1})_*(B)-E.
    \end{equation}Since $B+\alpha$ is big, then passing to a further blow-ups, we know that there exists effective $\mathbb Q$-divisor $F$, such that $B'_++B'_-+E+F$ has simple normal crossing support and a K\"ahler class $\omega'$ on $X'$ such that 
    \begin{equation}
        \nu^*(B+\alpha)=\omega'+F.
    \end{equation}Then we can write 
    \begin{equation*}
        \alpha'=(1-\epsilon)\alpha'+\epsilon(\omega'+F-(\nu^{-1})_*(B)-E)
    \end{equation*}and hence 
    \begin{equation}\label{eq--relatiion on resolution for generalized}
     K_{X'}+B'_+-\epsilon(\nu^{-1})_*(B)+\epsilon F+(1-\epsilon)\alpha'+\epsilon\omega'=\nu^*(K_X+B+\alpha)+B_-+\epsilon E.
    \end{equation}Note that by choosing \( \epsilon > 0 \) sufficiently small, we have  
\begin{equation}\label{eq--klt condition}
    B'_+ - \epsilon (\nu^{-1})_*(B) \ge 0, 
    \qquad 
    \lfloor B'_+ - \epsilon (\nu^{-1})_*(B) + \epsilon F \rfloor = 0.
\end{equation}
Moreover, by perturbing the coefficients slightly to rational numbers and using the Kähler class 
\( (1 - \epsilon)\alpha' + \epsilon \omega' \) to absorb the resulting error term, 
we may assume that 
\[
    G' := B'_+ - \epsilon (\nu^{-1})_*(B) + \epsilon F
\]
is an effective \( \mathbb{Q} \)-divisor in the following.
    Therefore by \eqref{eq--relatiion on resolution for generalized} and \eqref{eq--klt condition}, we get a K\"ahler class $\gamma'=(1-\epsilon)\alpha'+\epsilon\omega' $ and  a klt pair $(X', G')$ such that 
    \begin{equation}
        K_{X'}+G'+\gamma' \text{ is pseudo-effective.}
    \end{equation}

    Note that $B_-'+\epsilon E$ are $\nu'$-exceptional, then indeed one can follow the same argument given in Theorem \ref{thm-weak basepointfree}. That is we first run a relative $(K_{X'}+G')$-MMP with scaling of the K\"ahler class $\gamma'$. Then we get a constant $\lambda_{0,1}\leq 1$ and a Mori-Fano fibration $X_{0,1}\rightarrow X_1$ to a lower dimensional generalized klt pair $  (X_1, G_1+\gamma_1)$ with $G_1+\gamma_1$ big and $K_{X_1}+G_1+\gamma_1$ relatively nef. Then we run relative $K_{X_1}$-MMP with scaling of $G_1+\gamma_1$. Then the same argument as before ensures that $\lambda_{0,1}=1$ and we get projective fibrations  $f:Z\rightarrow Y$, $\nu_{0,1}:Z\rightarrow X_{0,1}$ and $\mu:Z\rightarrow X$ such that 
\begin{equation}\label{eq-relation on x01-2}
\nu_{0,1}^*(K_{X_{0,1}}+B_{0,1}+\gamma_{0,1})=f^*(\omega_Y+D_Y).
\end{equation}
Moreover since the $\mathbb R$-divisor \(B'_- + \epsilon E \) in \eqref{eq--relatiion on resolution for generalized} are exceptional, we know that there exist a \( \mu \)-exceptional \( \mathbb{R} \)-divisor \( E_1 \) and a \( \nu_{0,1} \)-exceptional \( \mathbb{R} \)-divisor \( E_2 \), with no common components, such that 
\begin{equation}\label{eq--class relation for X and x01 2}
    \mu^*(K_X + B + \alpha) + E_1
    = \nu_{0,1}^*(K_{X_{0,1}} + B_{0,1} + \gamma_{0,1}) + E_2.
\end{equation}
Combining \eqref{eq--class relation for X and x01 2} with \eqref{eq-relation on x01-2}, we obtain
\begin{equation}
    \mu^*(K_X + B + \alpha)
    = f^*(\omega_Y + D_Y) + E_2 - E_1.
\end{equation}
Applying the negativity lemma to $\mu:Z\rightarrow X$, we obtain that $f^*(D_Y)+E_2-E_1\geq 0$. Then we obtain the desired effective $\mathbb 
R$-divisor 
\begin{equation}
    D:=f^*(D_Y)+E_2-E_1.
\end{equation}
Moreover if $K_{X}+B+\alpha$ is nef, then applying the negativity lemma to $\nu_{0,1}:Z\rightarrow X_{0,1}$, we get that in \eqref{eq--class relation for X and x01 2}, we have $E_2=0$. 
\end{proof}

   Although Theorem~\ref{thm-weak basepointfree} and Theorem \ref{thm--generalized klt version} are only weaker versions of the transcendental base-point-freeness result, it already provides useful information about the adjoint class \( K_X + \alpha \).  
In particular, we give the following remarks.

\begin{remark}Under the same assumptions as in Theorem~\ref{thm--generalized klt version}, we collect the following basic properties:
\begin{itemize}
    \item The pseudoeffective class \( K_X +B+ \alpha \) admits a positive \((1,1)\)-current whose singularities are contained in an analytic subset.
    
    \item When \( K_X + B+\alpha \) is big, by \cite[Theorem~1.4]{boucksom2002volume}, taking \( f \) to be the identity, for any \( \epsilon > 0 \) there exists a modification 
    \( \mu_{\epsilon} : X_{\epsilon} \to X \) such that 
    \[
        \mu_{\epsilon}^*(K_X+B+\alpha) = \alpha_{\epsilon} + D_{\epsilon},
    \]
    where \( \alpha_{\epsilon} \) is a Kähler class and \( D_{\epsilon} \) is an effective \( \mathbb{Q} \)-divisor satisfying 
    \begin{equation}
        \operatorname{vol}(K_X + B+\alpha) - \epsilon 
        \leq \operatorname{vol}(\alpha_{\epsilon}) 
        \leq \operatorname{vol}(K_X + B+\alpha).
    \end{equation}
    
    \item When \( K_X +B+ \alpha \) is not big, the superadditivity of the positive intersection product 
    \cite[Theorem~3.5]{bdpp} implies that the class $\alpha_1$ and the effective $\mathbb R$-divisor obtained in Theorem \ref{thm--generalized klt version} satisfying the following
    \begin{equation}
        \big\langle f^*(\alpha_1)^k \cdot D^{n-k} \big\rangle = 0
        \quad \text{for all } 0 \le k \le n,
    \end{equation}
    where \( \langle \cdot \rangle \) denotes the positive intersection product defined in \cite{bdpp}.
\end{itemize}
\end{remark}

 Moreover, we can show that in Theorem~\ref{thm--generalized klt version},  
the dimension of the base coincides with the numerical dimension of the pseudoeffective class \( K_X +B+ \alpha \) as defined in \cite{bdpp}; see also  \cite{BEGZ}.  
Recall that for a pseudoeffective class \( \beta \in H^{1,1}(X, \mathbb{R}) \) on a compact Kähler manifold,
\[
\operatorname{nd}(\beta)
:= \max \left\{
    k \in \mathbb{N}
    \,\middle|\,
    \langle \beta^k \rangle \neq 0
    \text{ in } H^{k,k}(X, \mathbb{R})
\right\}.
\]
For a pseudoeffective class \( \beta \) on a singular compact normal Kähler space, we define its numerical dimension as the numerical dimension of its pullback to a Kähler resolution of that space.
In what follows, we will only need some basic properties of the numerical dimension:
\begin{itemize}
    \item for a nef class $\beta$, the positive intersection product is the same as the usual cup product;
    \item if \( \gamma \) is a pseudoeffective class on \( X \), then  
\begin{equation}
    \nd(\beta + \gamma) \ge \nd(\beta).
\end{equation}
\item Let \( \phi : X \to V \) be a bimeromorphic morphism from a compact Kähler manifold \( X \) onto a compact normal Kähler space \( V \).  
Suppose there exists a \( \phi \)-exceptional effective \( \mathbb{R} \)-divisor \( F \) such that  
\( \phi^*(\omega_V) - F \) is a Kähler class on \( X \) for some Kähler class \( \omega_V \) on \( V \).  
Then, for any class \( \alpha \in H^{1,1}_{\BC}(V) \) and any effective \( \phi \)-exceptional \( \mathbb{R} \)-divisor \( E \), we have
\begin{equation}\label{eq-numerical class does not change adding exceptional}
    \nd\bigl(\phi^*(\alpha) + E\bigr) = \nd\bigl(\phi^*(\alpha)\bigr).
\end{equation}
Indeed, given the assumption and by the definition of the positive intersection product for pseudo-effective classes, it suffices to show that for any $\epsilon>0$ and $k\in \mathbb N$
\begin{equation}\label{eq-positive intersection}
    \langle (\phi^*(\alpha + \epsilon \omega_V) + E)^k \rangle
    = \langle \phi^*(\alpha + \epsilon \omega_V)^k \rangle.
\end{equation}
By the argument given in Lemma~\ref{lem--adding exceptional no effect}, we know that if  
\( \phi^*(\alpha + \epsilon \omega_V) + E \) admits a positive \((1,1)\)-current \( T \) such that \( T \ge \beta \) for some smooth semi-positive \((1,1)\)-form \( \beta \),  
then \( \phi^*(\alpha + \epsilon \omega_V) \) also admits a positive \((1,1)\)-current \( \phi^*(T_V) \) satisfying \( \phi^*(T_V) \ge \beta \).  
Hence, \eqref{eq-positive intersection} follows from this property.
\end{itemize}

\begin{proposition}\label{prop-dim of base coincide with numerical}
  Under the same assumptions as in Theorem~\ref{thm--generalized klt version}, let  $X \xlongleftarrow{\mu} Z \xlongrightarrow{f} Y$ 
be the projective fibration obtained there.  
Then we have  
\begin{equation}
    \dim Y = \nd(K_X +B+ \alpha).
\end{equation}
\end{proposition}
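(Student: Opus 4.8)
The plan is to read off $\nd(K_X+B+\alpha)$ directly from the decomposition of $\mu^*(K_X+B+\alpha)$ produced in the proof of Theorem~\ref{thm--generalized klt version}, using only the following inputs recorded above: $\nd$ is a bimeromorphic invariant; $\nd(\beta+\gamma)\ge\nd(\beta)$ for $\gamma$ pseudo-effective; the identity \eqref{eq-numerical class does not change adding exceptional}; and the non-increase of $\nd$ under pull-back by a fibration. If $K_X+B+\alpha$ is big, then $f$ is the identity and $Y$ has the same dimension as $X$ (see the proof of Theorem~\ref{thm-weak basepointfree}), while $\nd(K_X+B+\alpha)=\dim X$ for a big class, so there is nothing to prove; hence I may assume $K_X+B+\alpha$ is not big, so that the relative MMP construction applies and one has on $Z$ the relations $\mu^*(K_X+B+\alpha)=f^*(\alpha_1)+D$ together with \eqref{eq-relation on x01-2} and \eqref{eq--class relation for X and x01 2}. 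Since $\mu:Z\to X$ is a bimeromorphic morphism from the Kähler manifold $Z$, the definition of the numerical dimension on a singular base gives $\nd(K_X+B+\alpha)=\nd\bigl(\mu^*(K_X+B+\alpha)\bigr)$, and I will show the latter equals $\dim Y$.

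For the lower bound, note $\mu^*(K_X+B+\alpha)=f^*(\alpha_1)+D$ with $D$ effective, hence $\nd\bigl(\mu^*(K_X+B+\alpha)\bigr)\ge\nd\bigl(f^*\alpha_1\bigr)$ by monotonicity. As $\alpha_1$ is a Kähler class on $Y$, the class $f^*\alpha_1$ is nef, so its positive intersection powers are ordinary cup powers, $\langle(f^*\alpha_1)^k\rangle=f^*(\alpha_1^k)$. Since $f$ is surjective, $f^*$ is injective on cohomology, and $\alpha_1^k\ne 0$ precisely for $0\le k\le\dim Y$; therefore $\nd(f^*\alpha_1)=\dim Y$ and $\nd\bigl(\mu^*(K_X+B+\alpha)\bigr)\ge\dim Y$.

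For the upper bound I pass through the nef model $X_{0,1}$. By \eqref{eq--class relation for X and x01 2} there are an effective $\mu$-exceptional divisor $E_1$ and an effective $\nu_{0,1}$-exceptional divisor $E_2$ with $\mu^*(K_X+B+\alpha)+E_1=\nu_{0,1}^*(K_{X_{0,1}}+B_{0,1}+\gamma_{0,1})+E_2$. Adding the effective divisor $E_1$ can only raise $\nd$, so
\[
\nd\bigl(\mu^*(K_X+B+\alpha)\bigr)\le\nd\bigl(\nu_{0,1}^*(K_{X_{0,1}}+B_{0,1}+\gamma_{0,1})+E_2\bigr).
\]
Now $\nu_{0,1}:Z\to X_{0,1}$ is a bimeromorphic morphism onto the Kähler space $X_{0,1}$, hence the hypothesis of \eqref{eq-numerical class does not change adding exceptional} is met, and since $E_2$ is effective and $\nu_{0,1}$-exceptional the right-hand side equals $\nd\bigl(\nu_{0,1}^*(K_{X_{0,1}}+B_{0,1}+\gamma_{0,1})\bigr)$. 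Finally, by \eqref{eq-relation on x01-2} this class is $f^*(\alpha_1+D_Y)$ for an effective $\mathbb{Q}$-divisor $D_Y$ on $Y$; as $\nd$ does not increase under pull-back by the fibration $f$ (positive intersection products commute with surjective pull-back and $H^{k,k}(Y)=0$ for $k>\dim Y$; see \cite{bdpp}), one gets $\nd\bigl(f^*(\alpha_1+D_Y)\bigr)\le\dim Y$. Combining the two bounds yields $\nd(K_X+B+\alpha)=\dim Y$.

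The main obstacle is the upper bound: knowing merely that $D$ is effective gives $\nd\ge\dim Y$ but no control from above, so the essential point is to use the finer identity $\mu^*(K_X+B+\alpha)+E_1=\nu_{0,1}^*(K_{X_{0,1}}+B_{0,1}+\gamma_{0,1})+E_2$ coming out of the MMP, which exhibits $\mu^*(K_X+B+\alpha)$, up to effective exceptional corrections on both sides, as a pull-back from the $(\dim Y)$-dimensional base $Y$, and then to invoke the two negligibility properties of the numerical dimension so that these corrections and the passage through $X_{0,1}$ do not affect the count.
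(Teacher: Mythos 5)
Your argument is correct and follows essentially the same route as the paper: both proofs trace $\mu^*(K_X+B+\alpha)$ through the identities \eqref{eq--class relation for X and x01 2} and \eqref{eq-relation on x01-2} to the pullback $f^*(\alpha_1+D_Y)$, using monotonicity of $\nd$ and the exceptional-divisor invariance \eqref{eq-numerical class does not change adding exceptional}. The only (harmless) differences are that you split the statement into a lower bound (via $f^*(\alpha_1)+D$ and nefness of $f^*\alpha_1$) and an upper bound, handling $E_1$ by monotonicity rather than by a second application of \eqref{eq-numerical class does not change adding exceptional}, whereas the paper obtains a chain of equalities by invoking that property for both $E_1$ and $E_2$ after arranging (via a further resolution, following Koll\'ar) the anti-relatively-ample exceptional divisors required by its hypothesis.
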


\begin{proof}
Recall \eqref{eq--class relation for X and x01 2} and \eqref{eq-relation on x01-2}, in the proof of Theorem~\ref{thm--generalized klt version}, and  we have the following identity of cohomology classes:
\begin{equation}
    \mu^*(K_X +B+ \alpha) = f^*(\alpha_1 + D_Y) + E_2 - E_1,
\end{equation}
where \( \alpha_1 \) is a Kähler class on \( Y \), and \( E_1, E_2, D_Y \) are effective \( \mathbb{R} \)-divisors satisfying \[ f^*(D_Y) + E_2 - E_1 \ge 0. \]  
Moreover, $E_1$ is $\mu$-exceptional and \( E_2 \) is \( \nu_{0,1} \colon Z \to X_{0,1} \)-exceptional.  
By \cite[§8]{kollár2021}, after passing to a further resolution, we may assume that both 
\( \nu_{0,1} \) and \( \mu \) admit effective exceptional \( \mathbb{R} \)-divisors \( F_{\nu} \) and \( F_{\mu} \), respectively, such that 
\( -F_{\nu} \) and \( -F_{\mu} \) are relatively ample.  
Hence, we can apply the property \eqref{eq-numerical class does not change adding exceptional} in the following computation:
\begin{equation}
\begin{aligned}
    \nd(K_X +B+ \alpha)
    &= \nd\bigl(\mu^*(K_X +B+ \alpha) + E_1\bigr) \\
    &= \nd\bigl(\nu_{0,1}^*(K_{X_{0,1}} + B_{0,1}+\alpha_{0,1}) + E_2\bigr) \\
    &= \nd\bigl(\nu_{0,1}^*(K_{X_{0,1}} + B_{0,1}+\alpha_{0,1})\bigr) \\
    &= \nd\bigl(f^*(\alpha_1 + D_Y)\bigr)=\dim Y.
\end{aligned}
\end{equation}
\end{proof}

For later reference, we state the folklore base-point-freeness conjecture for generalized pairs on compact strongly \( \mathbb{Q} \)-factorial Kähler spaces.
\begin{conjecture}\label{conj--basepointfree}
    Let \( (X, B + \alpha) \) be a compact Kähler strongly \( \mathbb{Q} \)-factorial generalized klt pair.  
If \( B + \alpha \) is big and \( K_X + B + \alpha \) is nef, then \( K_X + B + \alpha \) is semiample; that is, there exists a holomorphic morphism \( f : X \to Y \) onto a compact Kähler space and a Kähler class \( \omega_Y \) on \( Y \) such that
\[
K_X + B + \alpha = f^*(\omega_Y) \text{ in } H^{1,1}_{\BC}(X).
\]
\end{conjecture}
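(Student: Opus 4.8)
Since Conjecture~\ref{conj--basepointfree} is open, what follows is a strategy rather than a proof. The plan is to bootstrap from Theorem~\ref{thm--generalized klt version}. Applying it to $(X,B+\alpha)$ --- the hypotheses hold, as $K_X+B+\alpha$ nef is in particular pseudo-effective --- produces a compact K\"ahler manifold $Z$ together with a bimeromorphic morphism $\mu\colon Z\to X$ and a projective fibration $f\colon Z\to Y$ onto a compact K\"ahler manifold $Y$ with rationally connected general fibres, such that $\mu^*(K_X+B+\alpha)=f^*(\alpha_1)+D$ for a K\"ahler class $\alpha_1$ and an effective $\mathbb R$-divisor $D$, and --- using the nef case --- an effective $\mathbb Q$-divisor $D_Y$ on $Y$ and a $\mu$-exceptional effective $E$ with $E+D=f^*(D_Y)$, whence
\begin{equation}\label{eq--plan reduction}
    \mu^*(K_X+B+\alpha)+E=f^*(\alpha_1+D_Y).
\end{equation}
By Proposition~\ref{prop-dim of base coincide with numerical}, $\dim Y=\nd(K_X+B+\alpha)$. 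If $\dim Y=\dim X$ we are already in the big case and $f$ may be taken bimeromorphic; otherwise, since the general fibres of $f$ are rationally connected, the canonical bundle formula of \cite{hacon-paun} (used here in Definition~\ref{def--pushing forward for mori-fano fibration} and Lemma~\ref{lem-bigness if preserved under fano fibration}), together with the usual negativity-lemma and resolution bookkeeping, should equip $Y$ with a strongly $\mathbb Q$-factorial generalized klt pair $(Y,B_Y+\gamma_Y)$ for which $B_Y+\gamma_Y$ is big, $K_Y+B_Y+\gamma_Y$ is nef, and $K_Y+B_Y+\gamma_Y=\alpha_1+D_Y$ in $H^{1,1}_{\BC}(Y)$ --- hence big, since it dominates the K\"ahler class $\alpha_1$. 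Granting the conjecture for $(Y,B_Y+\gamma_Y)$ gives $g\colon Y\to V$ and a K\"ahler class $\omega_V$ with $K_Y+B_Y+\gamma_Y=g^*\omega_V$; substituting into \eqref{eq--plan reduction} and running the negativity-lemma argument from the end of the proof of Theorem~\ref{thm-weak basepointfree} (using that $K_X+B+\alpha$ is nef, which forces $E$ to remain genuinely $\mu$-exceptional) should allow one to contract $g\circ f$ to a morphism $X\to V$ realizing $K_X+B+\alpha$ as the pullback of $\omega_V$. This is the reduction recorded in Remark~\ref{rmk-enough to show freeness for big class} and Corollary~\ref{cor-numerical dim at most 3 is done}: the conjecture in general follows from its special case in which $K_X+B+\alpha$ is big.

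It remains to handle the big case, and here the plan is to transpose the Kawamata--Shokurov base-point-freeness machinery to the transcendental K\"ahler setting, arguing by induction on $\dim X$. The two ingredients are (a) a \emph{transcendental non-vanishing theorem}: a big adjoint class $K_X+B+\alpha$ should carry a closed positive $(1,1)$-current with analytic singularities cutting out a genuine finite-dimensional linear system --- equivalently, be numerically equivalent to an effective $\mathbb R$-divisor plus a small K\"ahler class; and (b) an inductive section-lifting step: perturb $B+\alpha$ within its big class to create a non-klt place, restrict to the associated subvariety, invoke non-vanishing there in lower dimension, and lift sections to $X$ via a Kawamata--Viehweg-type vanishing or a H\"ormander $L^2$-extension in the spirit of the proof of Lemma~\ref{lem-relative big line bundle has fiberwise sections}, using Demailly's regularization (Section~\ref{sec:preliminary}--(10)) to keep all singular loci analytic throughout the iteration. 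When $\nd(K_X+B+\alpha)\le 3$ this program is carried out via \cite[Theorems~1.2 and~2.33]{dhy}, giving Corollary~\ref{cor-bpf for ndleq 3}; for arbitrary numerical dimension one would need the corresponding inputs in all dimensions.

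The main obstacle --- and the reason the conjecture is still open --- is ingredient (a) in full generality. In the projective case non-vanishing rests on the rationality of divisor classes: Zariski-type decompositions, Diophantine approximation, and finite generation of the canonical ring from \cite{BCHM}. None of these is available for a genuine transcendental nef class $\alpha$ on a non-projective K\"ahler manifold, where $H^{1,1}_{\BC}(X)$ carries no rational structure; the present paper circumvents this precisely by proving only the weaker, pseudo-effective statement of Theorem~\ref{thm-weak basepointfree}. A secondary difficulty, even granting (a), is that the transcendental analogues of the vanishing theorems needed for the lifting in (b) are not available in the required generality, so that step too would demand new input beyond \cite{dhy}. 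Consequently the conjecture is at present known only in the cases recorded in the introduction: $X$ projective, or $\dim X\le 3$, or $\nd(K_X+B+\alpha)\le 3$.
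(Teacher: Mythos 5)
The statement you were given is a conjecture, and the paper offers no proof of it; you correctly treat it as open and propose only a reduction strategy. Your reduction of the general case to the case where \(K_X+B+\alpha\) is big, via Theorem~\ref{thm--generalized klt version} and the negativity-lemma descent, is essentially the same argument the paper records in Remark~\ref{rmk-enough to show freeness for big class} and Corollary~\ref{cor-numerical dim at most 3 is done}, and your identification of transcendental non-vanishing in the big case as the genuine obstruction matches the paper's own framing, so there is nothing to correct beyond the fact that the conjecture itself remains unproved.
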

\begin{remark}\label{rmk-enough to show freeness for big class}
Theorem~\ref{thm--generalized klt version} suggests that if Conjecture~\ref{conj--basepointfree} holds under the additional assumption that \( K_X + B + \alpha \) is big, then it should hold in full generality.  
Indeed combining with the results of~\cite{ou2025}, the cone theorem for Kähler \( \mathbb{Q} \)-factorial generalized klt pairs has been established in~\cite[Theorem~0.5]{hacon-paun}.  
Therefore, assuming Conjecture~\ref{conj--basepointfree} in the big adjoint class case suffices to construct contractions of negative extremal rays, allowing one to run a \emph{global} MMP with scaling.  
Since the boundary class is big, one may then generalize the arguments of~\cite{fujino2022, Das_HP2024, BCHM} to show that flips exist and terminate, thereby obtaining a good log terminal model.  
Applying this procedure to the pair \( (Y, \omega_Y + D_Y) \) obtained in Theorem~\ref{thm--generalized klt version}, and replacing \( Y \) by this good minimal model, we obtain the following diagram
\begin{equation}
    \begin{tikzcd}
        & Z \arrow[dl, "\nu"'] \arrow[dr, "h"] \\
        X \arrow[rr, dashed, "f"'] && Y.
    \end{tikzcd}
\end{equation}
Moreover, there exist a Kähler class \( \omega_Y \), an effective \( \nu \)-exceptional \( \mathbb{R} \)-divisor \( E \), and an effective \( \mathbb{R} \)-divisor \( F \) with \( \operatorname{codim}_Y(h(F)) \ge 2 \) such that  
\begin{equation}
    \nu^*(K_X + B + \alpha) + E = h^*(\omega_Y) + F.
\end{equation}
We may assume that \( E \) and \( F \) have no common components.  
Applying the negativity lemma~\cite[Lemma~1.3]{wang2021iitaka} together with~\cite[Lemma~2.8]{das-hacon2024}, we conclude that \( E = 0 \) and \( F = 0 \).
Since \( \omega_Y \) is a Kähler class, it follows that \( h \) must contract every fiber of \( \nu \).  
By~\cite[Lemma~1.15]{debarre}, the morphism \( h \) therefore factors through \( \nu \); that is, the map \( f \) extends to a globally defined holomorphic morphism \( f : X \to Y \).  
Finally, since the pullback map \( \nu^* : H^{1,1}_{\BC}(X) \to H^{1,1}_{\BC}(Z) \) is injective, we conclude that
\[
    K_X + B + \alpha = f^*(\omega_Y).
\]
\end{remark}

 Since the existence of log terminal and log canonical models for generalized klt pairs with big adjoint classes 
on two- and three-dimensional compact Kähler spaces has been established in 
\cite[Theorems~1.2 and~2.33]{dhy} (see also~\cite{horing-peternell,CHP16,das-Ou,dh-3fold}), 
it follows from the discussion in Remark~\ref{rmk-enough to show freeness for big class} 
and Proposition~\ref{prop-dim of base coincide with numerical} that 
Conjecture~\ref{conj--basepointfree} holds whenever the numerical dimension of \( K_X + B + \alpha \) is at most three.
In particular, we obtain the following consequence of Theorem~\ref{thm--generalized klt version}, 
which includes Corollary~\ref{cor-bpf for ndleq 3} as a special case.

\begin{corollary}\label{cor-numerical dim at most 3 is done}
Let \( (X, B + \alpha) \) be a compact Kähler strongly \( \mathbb{Q} \)-factorial generalized klt pair such that \( B + \alpha \) is big and \( K_X + B + \alpha \) is nef.  
If the numerical dimension \( \nd(K_X + B + \alpha) \le 3 \), then there exists a holomorphic morphism 
\( f : X \to Y \) onto a compact Kähler space and a Kähler class \( \omega_Y \) on \( Y \) such that
\begin{equation}
    K_X + B + \alpha = f^*(\omega_Y).
\end{equation}
\end{corollary}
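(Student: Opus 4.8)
The plan is to bootstrap from Theorem~\ref{thm--generalized klt version} and Proposition~\ref{prop-dim of base coincide with numerical}. Together these reduce the assertion to base-point-freeness for a generalized klt pair with \emph{big} adjoint class on a compact Kähler space of dimension at most three, a situation in which the needed minimal model program has been established in \cite[Theorems~1.2 and~2.33]{dhy}; the contraction obtained downstairs is then transported back to $X$ by the negativity-lemma argument already carried out in Remark~\ref{rmk-enough to show freeness for big class}.

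First I would apply Theorem~\ref{thm--generalized klt version} to $(X, B+\alpha)$ in its nef case (the hypotheses that $B+\alpha$ be big and $K_X+B+\alpha$ be nef match exactly). This produces a bimeromorphic morphism $\mu\colon Z\to X$ and a projective fibration $f_0\colon Z\to Y_0$ between compact Kähler manifolds, with general fibers of $f_0$ rationally connected, together with a Kähler class $\alpha_1$ on $Y_0$, an effective $\mathbb{Q}$-divisor $D_{Y_0}$ on $Y_0$, and an effective $\mu$-exceptional $\mathbb{R}$-divisor $E$ satisfying
\[
\mu^*(K_X+B+\alpha)+E=f_0^*(\alpha_1+D_{Y_0}).
\]
By Proposition~\ref{prop-dim of base coincide with numerical} we have $\dim Y_0=\nd(K_X+B+\alpha)\le 3$. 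Inspecting the proof of Theorem~\ref{thm--generalized klt version}, $Y_0$ is in fact a resolution $\pi\colon Y_0\to X_l$ of a strongly $\mathbb{Q}$-factorial compact Kähler generalized klt pair $(X_l,B_l+\alpha_l)$ with $\pi^*(K_{X_l}+B_l+\alpha_l)=\alpha_1+D_{Y_0}$, whose boundary $B_l+\alpha_l$ is big, whose adjoint class $K_{X_l}+B_l+\alpha_l$ is big, and with $\dim X_l=\dim Y_0\le 3$.

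Next I would run the minimal model program for $(X_l,B_l+\alpha_l)$. Since its boundary is big and $\dim X_l\le 3$, \cite[Theorems~1.2 and~2.33]{dhy} provide a good log terminal model and then a log canonical (ample) model of this pair; because $K_{X_l}+B_l+\alpha_l$ is big, the latter is a birational model $Y$ of $X_l$ equipped with a Kähler class $\omega_Y$ onto which the adjoint class of the log terminal model pulls back. Passing to a common resolution $Z'$ of $X$, $Z$, $Y_0$, $X_l$ and $Y$, with $\nu\colon Z'\to X$ and $h\colon Z'\to Y$ the induced morphisms, and transporting the displayed identity through this chain of bimeromorphic maps, I would arrive at
\[
\nu^*(K_X+B+\alpha)+E'=h^*(\omega_Y)+F',
\]
where $E'$ is effective and $\nu$-exceptional, $F'$ is effective with $\operatorname{codim}_Y h(F')\ge 2$, and $E'$, $F'$ share no component.

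Finally, exactly as in Remark~\ref{rmk-enough to show freeness for big class}, the negativity lemma \cite[Lemma~1.3]{wang2021iitaka} together with \cite[Lemma~2.8]{das-hacon2024} forces $E'=0$ and $F'=0$; since $\omega_Y$ is Kähler the morphism $h$ then contracts every fiber of $\nu$, so by \cite[Lemma~1.15]{debarre} the rational map $X\dashrightarrow Y$ extends to a genuine morphism $f\colon X\to Y$, and injectivity of $\nu^*$ on Bott--Chern cohomology yields $K_X+B+\alpha=f^*(\omega_Y)$, which is the claim. I expect the one genuinely delicate point to be the middle step: one must verify that the descended pair $(X_l,B_l+\alpha_l)$ really has big boundary, so that the Kähler minimal model program of \cite{dhy} applies in the form stated there, and then keep careful track of all the divisorial corrections — both the exceptional divisors produced by Theorem~\ref{thm--generalized klt version} and the new ones arising along the minimal model program on $X_l$ — when carrying the identity from the ample model $Y$ back to $X$. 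Everything else is either quoted directly from the results above or is a routine application of the negativity lemma.
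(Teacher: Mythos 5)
Your proposal is correct and follows essentially the same route as the paper: the paper's (very terse) justification is exactly the combination of Theorem~\ref{thm--generalized klt version}, Proposition~\ref{prop-dim of base coincide with numerical}, the existence of good models in dimension at most three from \cite[Theorems~1.2 and~2.33]{dhy}, and the negativity-lemma descent of Remark~\ref{rmk-enough to show freeness for big class}. The one "delicate point" you flag — bigness of the descended boundary — is already supplied inside the proof of Theorem~\ref{thm--generalized klt version} via Lemmas~\ref{lem--bigness preserved under pushforward} and~\ref{lem-bigness if preserved under fano fibration}, so nothing further is needed.
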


Another direct consequence of Theorem~\ref{thm--generalized klt version} is the following result, which extends to the Kähler setting the analogue of \cite[Theorem~4.1]{das-hacon2024}—itself a generalization of \cite[Theorem~1.11]{birkar16}.

\begin{proposition}\label{prop-generalization of das-hacon 4.1}
    Let $(X,B+\alpha)$ be a compact K\"ahler strongly $\mathbb Q$-factorial generalized klt pair such that $B+\alpha$ is big and $K_X+B+\alpha$ is nef but not big. Then though a general point $x\in X$, there is a rational curve $\Gamma_x$ such that $(K_X+B+\alpha)\cdot \Gamma_x=0$.
\end{proposition}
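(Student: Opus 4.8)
The plan is to deduce this directly from Theorem~\ref{thm--generalized klt version}. First I would apply that theorem to the pair $(X,B+\alpha)$, obtaining a projective fibration $X \xleftarrow{\mu} Z \xrightarrow{f} Y$ with $Z,Y$ compact Kähler manifolds and general fibers of $f$ rationally connected, together with a Kähler class $\alpha_1$ on $Y$ and an effective $\mathbb{R}$-divisor $D$ on $Z$ with $\mu^*(K_X+B+\alpha)=f^*(\alpha_1)+D$; since $K_X+B+\alpha$ is \emph{nef}, the moreover part provides an effective $\mathbb{Q}$-divisor $D_Y$ on $Y$ and an effective $\mu$-exceptional $\mathbb{R}$-divisor $E$ with $E+D=f^*(D_Y)$, so that
\[
\mu^*(K_X+B+\alpha)+E=f^*(\alpha_1+D_Y)\quad\text{in }H^{1,1}_{\BC}(Z).
\]
Because $K_X+B+\alpha$ is not big, Proposition~\ref{prop-dim of base coincide with numerical} gives $\dim Y=\nd(K_X+B+\alpha)<\dim X$ (equivalently: if $f$ were bimeromorphic then $\mu^*(K_X+B+\alpha)$ would dominate the big class $f^*\alpha_1$, forcing $K_X+B+\alpha$ to be big); hence the general fibre $F$ of $f$ is a positive-dimensional smooth projective rationally connected variety.

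Next I would fix a general point $x\in X$ and pass to its preimage. Since $\mu$ is a composition of blow-ups along smooth centres, for general $x$ the fibre $\mu^{-1}(x)$ is a single point $z\notin\Exc(\mu)$, and $z$ is a general point of $Z$; in particular $z\notin\operatorname{Supp}E$ because $E$ is $\mu$-exceptional. Moreover the induced dominant meromorphic map $X\dashrightarrow Y$ carries a general $x$ to a general point $y:=f(z)$ of $Y$, so $F:=f^{-1}(y)\ni z$ is a general fibre; since the general fibres of $f$ sweep out $Z$ while $\Exc(\mu)\subsetneq Z$, we also get $F\not\subseteq\Exc(\mu)\supseteq\operatorname{Supp}E$, so $E|_F$ is a well-defined effective $\mathbb{R}$-divisor on $F$. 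Restricting the displayed identity to $F$ and using that $f^*$ of any class is trivial on a fibre (equivalently $y\notin\operatorname{Supp}D_Y$ for general $y$), I obtain
\[
\mu^*(K_X+B+\alpha)\big|_F=-\,E|_F\quad\text{in }H^{1,1}(F),
\]
and the left-hand side is a nef class on the projective variety $F$ because $K_X+B+\alpha$ is nef.

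Finally, since $z$ is a general point of the rationally connected variety $F$, there is a rational curve $\Gamma\subset F$ through $z$. As $z\notin\operatorname{Supp}E$, the curve $\Gamma$ is not contained in $\operatorname{Supp}(E|_F)$, so $E|_F\cdot\Gamma\ge 0$; and $-E|_F$ is nef, so $E|_F\cdot\Gamma\le 0$. Hence $\mu^*(K_X+B+\alpha)\cdot\Gamma=-E|_F\cdot\Gamma=0$. Setting $\Gamma_x:=\mu(\Gamma)$: since $\mu$ is an isomorphism near $z\in\Gamma$, the morphism $\mu|_\Gamma$ is non-constant, so $\Gamma_x$ is a rational curve through $x=\mu(z)$, and the projection formula gives $(K_X+B+\alpha)\cdot\Gamma_x=\tfrac{1}{\deg(\mu|_\Gamma)}\,\mu^*(K_X+B+\alpha)\cdot\Gamma=0$, which is the assertion.

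The only real subtlety, and the main point to be careful about, is the bookkeeping of genericity: one must choose $x$ so that \emph{simultaneously} $\mu^{-1}(x)$ is a single point off $\Exc(\mu)$, the fibre of $f$ through that point is a general fibre (hence smooth, rationally connected, and not contained in the exceptional locus), and a rational curve of that fibre passes through the point. Each of these conditions fails only on a proper closed subset of $X$, so a general $x$ works. The remaining ingredients — nefness restricting well to fibres, triviality of $f^*$-classes on fibres, and the elementary fact that an effective $\mathbb{R}$-divisor whose negative is nef has zero intersection with every curve not contained in its support — are routine.
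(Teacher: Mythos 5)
Your proof is correct and follows essentially the same route as the paper: apply Theorem~\ref{thm--generalized klt version}, note $\dim Y<\dim X$ because $K_X+B+\alpha$ is not big, and take a rational curve in a general (rationally connected, positive-dimensional) fibre of $f$ through a general point avoiding $\Exc(\mu)\cup f^{-1}(\supp D_Y)$, on which the pulled-back class is trivial, then push forward by $\mu$. The only cosmetic difference is that you deduce triviality on the curve via a nef-plus-effective argument applied to $-E|_F$, whereas the paper simply uses that such a fibre is disjoint from $\supp(f^*D_Y)\supseteq \supp D\cup\supp E$, so the restriction of $\mu^*(K_X+B+\alpha)$ to the fibre is already zero.
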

\begin{proof}
 Applying Theorem~\ref{thm--generalized klt version}, we obtain projective fibrations
\[
X \xlongleftarrow{\mu} Z \xlongrightarrow{f} Y
\]
and an effective \( \mathbb{Q} \)-divisor \( D_Y \).  
Since \( K_X + B + \alpha \) is not big, we have \( \dim Y < \dim X \).  
As the general fibers of \( f \) are rationally connected, it follows that for a general point
\[
x \in Z \setminus \big( \Exc(\mu) \cup f^{-1}(\supp D_Y) \big),
\]
there exists a rational curve \( \Gamma_x \subset f^{-1}(f(x)) \) such that
\begin{equation}
    \mu^*(K_X + B + \alpha) \cdot \Gamma_x = 0.
\end{equation}
By the projection formula, this induces a rational curve \( \Gamma_x \subset X \) passing through a general point \( x \in X \) with
\[
(K_X + B + \alpha) \cdot \Gamma_x = 0.
\]
\end{proof}

\section{A generalized Schwarz lemma and proof of Theorem \ref{thm--main}}\label{sec--proof of main theorem}
In this section, we first establish a bimeromorphic version of the parabolic Schwarz lemma, generalizing the results of \cite{songtian2007, tosatti2-zhang}.  
This provides a degenerate lower bound for the evolving metric \( \omega_t \) in the Kähler–Ricci flow~\eqref{eq--ric flow}, under the assumption that the limit class \( K_X + \omega \) admits a positive \((1,1)\)-current with suitable regularity properties.  
We then apply Theorem~\ref{thm-weak basepointfree} to complete the proof of Theorem~\ref{thm--main}.

We begin by recalling some standard conventions.  
Let \( D \) be an effective Cartier divisor on a smooth complex manifold.  
Then \( D \) defines a line bundle, and we denote by \( s_D \) the tautological section of this line bundle whose zero locus is precisely \( D \).  
We fix a smooth Hermitian metric \( h_D \) on this line bundle such that \( \max |s_D|_{h_D} \le 1 \); the precise choice of \( h_D \) will not be important for our following discussion.  
When \( D = \sum a_i D_i \) is an effective \( \mathbb{R} \)-Cartier divisor, we use the notation
\[|s_D|_{h_D} := \prod_{i=1}^N |s_{D_i}|_{h_{D_i}}^{a_i}.
\]
When there is no risk of confusion, we will omit the subscript and simply write \( |s_D| \) for brevity.
\begin{theorem}\label{thm-generalized schwarz}
    Let $(X,\omega_0)$ be a compact K\"ahler manifold with $\nef_X(\omega_0)= 1$. Suppose there exists projective fibrations $X\xlongleftarrow{\mu} Z\xlongrightarrow{f} Y$ between compact K\"ahler manifolds such that
    \begin{itemize}
        \item $\mu$ is a composition of blow-ups along smooth holomorphic submanifolds such that 
        \begin{equation}\label{eq-class decomposition}
            \mu^*(K_X+\omega_0)=f^*(\alpha_1)+\alpha_2+D;
        \end{equation}
        \item there exists a K\"ahler form $\theta_1\in \alpha_1$ on $Y$ and a smooth non-negative $(1,1)$-form $\theta_2$ in $\alpha_2$. Morever $D$
     is an effective $\mathbb R$-divisor and there exists effective $\mathbb R$-divisor $D_Y$ on $Y$ and effective $\mu$-exceptional $\mathbb R$-divisor $F$ such that $D+F=f^*(D_Y)$.
    \end{itemize}Then there exists a constant $C$ such that for any $t\in [0,1)$, we have 
    \begin{equation}
         \tr_{\mu^*(\omega_t)}(f^*(\theta_1))\leq C|s_{f^*{D_Y}}|^{-C}.
    \end{equation}
\end{theorem}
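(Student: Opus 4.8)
\emph{Proof plan.}
The strategy is a parabolic Chern--Lu (Schwarz lemma) estimate on $Z$ coupled to a maximum principle with penalization terms built from the divisors $D,F$ and from the Kähler--Ricci flow potential, in the spirit of \cite{songtian2007, tosatti2-zhang} but adapted to the exceptional locus of $\mu$ and to the adjoint divisor $f^*D_Y$. I would first transport the flow to $Z$: since $\mu$ is biholomorphic over $Z\setminus\Exc(\mu)$, the metrics $\mu^*\omega_t$ solve $\partial_t(\mu^*\omega_t)=-\operatorname{Ric}(\mu^*\omega_t)$ there, while $\mu^*[\omega_t]=(1-t)\mu^*[\omega_0]+t\,\mu^*(K_X+\omega_0)=(1-t)\mu^*[\omega_0]+t(f^*\alpha_1+\alpha_2+[D])$ by \eqref{eq-class decomposition}. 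Fixing a volume form $\Omega_X$ on $X$ with $-\operatorname{Ric}(\Omega_X)\in K_X$, a smooth Hermitian metric on $\mathcal O_Z(D)$ with Chern form $R_D$, and a potential $\varphi_t$ of the flow (so $\partial_t\varphi_t=\log\frac{\omega_t^n}{\Omega_X}$), one can write $\mu^*\omega_t=\Omega_t'+\ii\p\pp\Psi_t$ where
\begin{equation*}
  \Omega_t':=(1-t)\mu^*\omega_0+t(f^*\theta_1+\theta_2)+t[D]\ \geq\ t\,f^*\theta_1 \quad\text{as positive currents,}
\end{equation*}
$\Psi_t=\mu^*\varphi_t-t\log|s_D|^2+O(1)$ is smooth on $Z\setminus\operatorname{Supp}D$ (the term $-t\log|s_D|^2$ absorbing $tR_D$), and $\partial_t\Psi_t=\log\frac{(\mu^*\omega_t)^n}{|s_{K_{Z/X}}|^2|s_D|^2\,\Omega_Z}+O(1)$, with $\Omega_Z$ a fixed volume form and $K_{Z/X}$ the effective $\mu$-exceptional relative canonical divisor. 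Set $u:=\tr_{\mu^*\omega_t}(f^*\theta_1)$.

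The argument then rests on four points. First, the parabolic Chern--Lu inequality for $f\colon(Z,\mu^*\omega_t)\to(Y,\theta_1)$, valid off $\Exc(\mu)$ where $u>0$:
\begin{equation*}
  \bigl(\partial_t-\Delta_{\mu^*\omega_t}\bigr)\log u\ \leq\ C_0\,u,
\end{equation*}
$C_0$ bounding the holomorphic bisectional curvature of $(Y,\theta_1)$; the Ricci terms cancel precisely because the metric solves the Kähler--Ricci flow. Second, since $\Omega_t'\geq t f^*\theta_1$ and $\mu^*\omega_0,\theta_2\geq0$, on the smooth locus $\Delta_{\mu^*\omega_t}\Psi_t=n-\tr_{\mu^*\omega_t}\Omega_t'\leq n-t\,u$, so a large multiple $-A\Psi_t$ contributes a favorable $-At\,u$ term. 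Third, equipping $\mathcal O_Z(f^*D_Y)$ with the Hermitian metric pulled back from $Y$, its Chern form $f^*R_{D_Y}$ is dominated by a fixed multiple of $f^*\theta_1$, so the penalization $-b\log|s_{f^*D_Y}|^2$ contributes at most $Cb\,u$ under $\partial_t-\Delta_{\mu^*\omega_t}$ away from $\operatorname{Supp}(f^*D_Y)$. Fourth, a degenerate lower bound for the volume form, $(\mu^*\omega_t)^n\geq c\,|s_{\mathcal D}|^2\,\Omega_Z$ for some effective $\mathcal D\geq K_{Z/X}+D$ with $\operatorname{Supp}\mathcal D\subseteq\Exc(\mu)\cup f^{-1}(\operatorname{Supp}D_Y)$ --- equivalently $\partial_t\Psi_t$ bounded above --- which is the degenerate $C^0$-type estimate for the potential, to be established by comparison with an explicit $\log|s|$-subsolution of the parabolic Monge--Ampère equation and replacing the missing uniform $C^0$ bound on $\varphi_t$.

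It remains to organise the maximum principle, the subtle point being the degeneracy of $\mu^*\omega_t$ along $\Exc(\mu)$. Here the intersection identity $0=\mu^*(K_X+\omega_0)\cdot\ell=f^*\alpha_1\cdot\ell+D\cdot\ell$, in which both terms are $\geq0$ once $\ell\not\subseteq\operatorname{Supp}D$, shows that $f$ contracts every $\mu$-exceptional curve outside $\operatorname{Supp}D$; hence near $\Exc(\mu)\setminus\operatorname{Supp}(f^*D_Y)$ the map $f$ factors through $\mu$, and there $u$ stays locally bounded (by a coarser parabolic Schwarz-type bound for $\tr_{\omega_t}\omega_X$). Consequently, adding a small penalty $+\epsilon\log|s_{E_\mu}|^2$ with $E_\mu$ effective $\mu$-exceptional of support $\Exc(\mu)$, the function $H:=\log u-A\Psi_t-b\log|s_{f^*D_Y}|^2+\epsilon\log|s_{E_\mu}|^2$ tends to $-\infty$ along $\operatorname{Supp}(f^*D_Y)$ and along $\Exc(\mu)$, so on $[\delta,1)\times(Z\setminus(\Exc(\mu)\cup f^{-1}(\operatorname{Supp}D_Y)))$ its supremum is attained at an interior point. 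For $A\gg C_0+Cb$ and $t\geq\delta$, combining the four points (and absorbing the residual error $\epsilon\,\tr_{\mu^*\omega_t}(R_{E_\mu})$ with the coarser Schwarz-type bound) gives $(\partial_t-\Delta_{\mu^*\omega_t})H\leq -c\,u+C$, whence $u\leq C/c$ at the maximum and $H\leq C'$ everywhere. Unwinding with $\mu^*\varphi_t\leq C$, $t\leq1$, $\log|s_D|^2,\log|s_F|^2\leq0$, $D\leq f^*D_Y$, and a last local argument near $\Exc(\mu)$ to discard the spurious $|s_{E_\mu}|$-factor, yields $\tr_{\mu^*\omega_t}(f^*\theta_1)\leq C|s_{f^*D_Y}|^{-C}$ for $t\in[\delta,1)$; the range $[0,\delta]$ is immediate since the metrics are uniformly equivalent there.

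The hard part will be the fourth point: the degenerate volume lower bound --- a one-sided estimate $\partial_t\varphi_t\geq -C+C\log|s_{\mathcal D}|^2$ with $\mathcal D$ supported exactly on $\Exc(\mu)\cup f^{-1}(\operatorname{Supp}D_Y)$ --- obtained in the absence of any uniform $C^0$ control on $\varphi_t$; this, together with arranging the penalization so that $\sup H$ is genuinely interior and the resulting constant uniform as $t\nearrow1$, is where the analytic difficulty concentrates.
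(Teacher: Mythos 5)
Your overall architecture (parabolic Chern--Lu for $\log u$, a maximum principle with divisorial penalizations, a degenerate $C^0$-type input for the potential, and the observation that $f$ contracts $\mu$-exceptional curves away from $f^{-1}(\operatorname{Supp}D_Y)$ so that $f$ factors through $\mu$ there) matches the paper's proof. But your ``fourth point'' --- the uniform degenerate lower bound $(\mu^*\omega_t)^n \ge c\,|s_{\mathcal D}|^{2}\Omega_Z$, equivalently $\partial_t\varphi_t \ge -C + C\log|s_{\mathcal D}|^2$ --- is a genuine gap, and it is the load-bearing step of your scheme. In the case of interest $\dim Y<\dim X$, the class $K_X+\omega_0$ is not big, so $\int_X\omega_t^n=[\omega_t]^n\to (K_X+[\omega_0])^n=0$, while $\int_Z|s_{\mathcal D}|^{2C}\Omega_Z>0$ for any finite $C$; hence no such lower bound can hold with constants uniform in $t$. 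The paper needs no bound of this kind: it adds the term $-An(1-t)(\log(1-t)-1)$ to the test function, pairs $-A\partial_t\phi_t$ (controlled from above only via $\partial_t\phi_t\ge \mu^*\log(\omega_t^n/\omega_0^n)-C$, which is immediate from the flow equation) with the term $-A(1-t)\operatorname{tr}_{\mu^*\omega_t}\mu^*\omega_0$, and invokes the elementary fact that $x\mapsto x^{1/n}-\log x$ is bounded below. This Song--Tian device is exactly what replaces your volume lower bound; the paper even remarks that the estimate uses only the degenerate $C^0$ bound on the potential. Relatedly, the bound you do need but only gesture at --- a lower bound $\mu^*\varphi_t\ge (1-\epsilon_0)t\log|s_D|^2-C$ making your $H$ bounded above near $\operatorname{Supp}D$ --- is obtained in the paper by combining the relation $D+F=f^*D_Y$ with the negativity lemma (giving $D\ge\epsilon_0 f^*D_Y$ and $f^*((1-\epsilon_0)D_Y)-F\ge0$) and a maximum-principle $C^0$ lemma fed with Demailly regularizations of an $\alpha$-psh function with log poles.

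Two secondary issues. First, your penalization bookkeeping is off: with $-b\log|s_{f^*D_Y}|^2$ the function $H$ tends to $+\infty$, not $-\infty$, along $\operatorname{Supp}(f^*D_Y)\setminus\operatorname{Supp}D$ (e.g.\ along $\operatorname{Supp}F$), so the supremum need not be interior; the sign must be reversed, and in fact the paper dispenses with this term entirely, extracting the $|s_{f^*D_Y}|^{-C}$ degeneracy from the lower bound on $\phi_t$ together with $D\ge\epsilon_0 f^*D_Y$. Second, the error term $\epsilon\operatorname{tr}_{\mu^*\omega_t}(R_{E_\mu})$ produced by your $\epsilon\log|s_{E_\mu}|^2$ term cannot be absorbed by a ``coarser Schwarz bound for $\operatorname{tr}_{\omega_t}\omega_X$'': no such bound uniform in $t$ exists, since $[\omega_t]$ degenerates as $t\nearrow1$. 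The paper instead shows, for each fixed $t$ and via auxiliary blow-ups $X'\leftarrow Z'$ resolving the meromorphic map $\pi\colon X\dashrightarrow Y$, that $|f^*s_{D_Y}|^{C}\,u$ is bounded (with a $t$-dependent constant), which is all that is needed to locate the maximum of the test function off $\operatorname{Supp}(D+E)$.
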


\begin{remark}
Note that, in particular, this result asserts that the function \( \operatorname{tr}_{\mu^*(\omega_t)}(f^*(\theta_1)) \) is well defined and bounded on \( Z \setminus \operatorname{Supp}(f^*(D_Y)) \).  
This point requires justification, which will be provided in the proof, since \( Z \setminus \operatorname{Supp}(f^*(D_Y)) \) may still contain the exceptional locus of \( \mu \), and thus \( \mu^*(\omega_t) \) may degenerate on certain subsets of this region. 
Moreover, the degeneration of the estimate occurs only along \( \operatorname{Supp}(f^*(D_Y)) \), which in particular does not dominate \( Y \).  
This fact will play a crucial role in establishing the diameter lower bound for \( \omega_t \) later in this section.
\end{remark}

\begin{remark}
It is interesting to note that this degenerate \( C^2 \)-estimate is established using only the degenerate \( C^0 \)-estimate for the potential (Lemma~\ref{lem--C0 estimate}).  
This is in the same spirit as the estimate obtained in \cite[Section~4]{CollinsT} for the volume non-collapsing case.
\end{remark}
% Before proving this result, we briefly discuss its conceptual significance and draw an analogy with a classical lemma in birational geometry \cite[Lemma~7.11]{debarre}. For a birational morphism between normal varieties \( \mu: Z \to X \), and for any Cartier divisor \( D \) on \( X \) and effective exceptional divisor \( E \) of \( \mu \), one has
% \begin{equation}\label{eq--adding exceptional does not change dim}
%     H^0(Z, \mu^*D + E) = H^0(X, D).
% \end{equation}
% Roughly speaking, this means that although adding an effective exceptional divisor increases the positivity of the Cartier divisor, it has no effect on the space of global sections. This is closely analogous to the statement in Theorem \ref{thm-generalized schwarz}: although adding an effective exceptional divisor increases the positivity of the cohomology class, it has no influence on the lower bound of the Kähler forms \( \omega_t \) evolving along the Kähler–Ricci flow. Since \eqref{eq--adding exceptional does not change dim} is frequently used in birational geometry, we expect that Theorem~\ref{thm-generalized schwarz} and its variants may also find further applications in Kähler geometry.

Note that in \eqref{eq--ric flow}, our convention is that 
\(
    [\operatorname{Ric}(\omega)] 
    = \left[ -\frac{1}{2\pi} \sqrt{-1}\,\partial \bar{\partial} \log \omega^n \right] 
    = c_1(X).
\)
Therefore, it is more natural (see~\cite{song-weinkove}) to work with the convention
\(
    \Delta_{\omega} f = \operatorname{tr}_{\omega} \!\left( \frac{\sqrt{-1}}{2\pi} \partial \bar{\partial} f \right).
\) In the following, when there is no danger of confusion, we omit the factor \( \tfrac{1}{2\pi} \).

 Fix a  smooth (1,1)-form $\alpha$ in the class $K_X+\omega_0$, then there exists a smooth volume form $\Omega$ such that 
   \begin{equation}
       \alpha-\omega_0=\frac{\ii}{2\pi}\p\pp \log \Omega,  \quad \int_X \Omega=\int_X\omega_0^n.
   \end{equation}
   We can write 
    \begin{equation}
        \omega_t=(1-t)\omega_0+t\alpha+\frac{1}{2\pi}\ii\p\pp \varphi_t,
    \end{equation}where $\varphi_t$ satisfies the  parabolic complex Monge-Amp\`ere equation:
    \begin{equation}\label{eq-parabolic equation}
        \partial_t\varphi=\log (\frac{\omega_t^n}{\Omega }), \quad \left.\varphi\right|_{t=0}=0.
    \end{equation}

   The following degenerate \( C^0 \)-estimate can be obtained by adapting the argument of \cite{zhang-zhou}; see also \cite[Section~4]{CollinsT}.  
For the reader’s convenience, we include the details below.    
    \begin{lemma}\label{lem--C0 estimate}Let \( \frac{\eta}{2\pi} \) be an \( \alpha \)-psh function.  
Then there exists a constant \( C > 0 \) (depending on \( \eta \)) independent of \( t \), such that on \( X \times [0,1) \) we have
\begin{equation}
    t\,\eta - C \leq \varphi_t \leq C.
\end{equation}
\end{lemma}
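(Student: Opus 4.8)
The plan is to prove the two inequalities separately by the maximum principle for the parabolic Monge--Amp\`ere equation \eqref{eq-parabolic equation}, recalling that the hypothesis means $\alpha+\frac{\ii}{2\pi}\p\pp\eta\ge 0$ and that $\nef_X(\omega_0)=1$ forces the class $[\alpha]=K_X+[\omega_0]$ to be nef (hence not K\"ahler in general). For the \emph{upper bound}, I would apply Hamilton's trick to $\phi(t):=\max_X\varphi_t$: at a point $x_t$ realizing the spatial maximum one has $\frac{\ii}{2\pi}\p\pp\varphi_t(x_t)\le 0$, so $0<\omega_t(x_t)\le\big((1-t)\omega_0+t\alpha\big)(x_t)$, and since the continuous family $\{(1-t)\omega_0+t\alpha\}_{t\in[0,1]}$ is dominated by $C_0\omega_0$ on the compact $X$, comparing the associated volume forms gives $\partial_t\varphi_t(x_t)=\log(\omega_t^n/\Omega)(x_t)\le C_1$. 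Thus $\phi'(t)\le C_1$ for a.e.\ $t$, and integrating from $\phi(0)=0$ yields $\varphi_t\le C_1$ on $X\times[0,1)$.

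For the \emph{lower bound}, I would first treat the case where $\eta$ has analytic singularities (which is all that is needed in the applications of this lemma). Then $-t\eta\to+\infty$ along the singular locus of $\eta$, so for each $t$ the spatial minimum of $w:=\varphi_t-t\eta$ is attained at a point $x_t$ where $\eta$ is smooth; there $\omega_t=\big((1-t)\omega_0+t(\alpha+\frac{\ii}{2\pi}\p\pp\eta)\big)+\frac{\ii}{2\pi}\p\pp w\ge(1-t)\omega_0>0$, whence $\partial_t\varphi_t(x_t)\ge n\log(1-t)-C_2$. Applying Hamilton's trick to $m(t):=\min_X w(\cdot,t)$ gives $m'(t)\ge n\log(1-t)-C_3$ for a.e.\ $t$, and since $m(0)=0$ and $\int_0^1\log(1-s)\,ds=-1$ we get $m(t)\ge-C_3-n$, i.e.\ $\varphi_t\ge t\eta-(C_3+n)$. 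For a general $\alpha$-psh $\eta$, I would invoke Demailly's regularization theorem \cite{demailly1992}: since $[\alpha]+\delta[\omega_0]$ is K\"ahler, for each $\delta>0$ there is $\eta_\delta\ge\eta$, smooth off an analytic set, with $\alpha+\frac{\ii}{2\pi}\p\pp\eta_\delta\ge-\delta\omega_0$ and $\sup_X\eta_\delta\le\sup_X\eta+1$. Running the same argument for $w_\delta:=\varphi_t-t\eta_\delta$ on $X\times[0,(1+\delta)^{-1})$, where now $(1-t)\omega_0+t(\alpha+\frac{\ii}{2\pi}\p\pp\eta_\delta)\ge(1-t(1+\delta))\omega_0>0$, together with $\int_0^{1/(1+\delta)}\log(1-s(1+\delta))\,ds=-(1+\delta)^{-1}\ge-1$ and the uniform bound on $\sup_X\eta_\delta$, yields $w_\delta\ge-C_4$ with $C_4$ independent of $\delta$; since $\eta_\delta\ge\eta$ this gives $\varphi_t\ge t\eta-C_4$ on $X\times[0,(1+\delta)^{-1})$, and sending $\delta\to0$ extends it to $X\times[0,1)$. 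Taking $C$ to be the largest of the constants so produced finishes the proof.

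The \emph{main obstacle} is the interplay between the singularity of $\eta$ and the degeneration of $\omega_t$ as $t\nearrow1$. One cannot run the minimum principle for $\varphi_t-t\eta$ naively, because $\frac{\ii}{2\pi}\p\pp(\varphi_t-t\eta)$ is not of a fixed sign and $\eta$ need not be twice differentiable at its finite points; hence the regularization $\eta_\delta$ with loss $\delta\to0$, which is precisely where the nefness of $K_X+[\omega_0]$ is used. Moreover, since the regularized reference form $(1-t)\omega_0+t(\alpha+\frac{\ii}{2\pi}\p\pp\eta_\delta)$ is only positive for $t<(1+\delta)^{-1}$, the estimate is obtained on a shrinking time interval and recovered on all of $[0,1)$ only after sending $\delta\to0$ with a $\delta$-independent constant. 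The one quantitative point to check is that the logarithmic term $\int_0^{1/(1+\delta)}|\log(1-s(1+\delta))|\,ds=(1+\delta)^{-1}\le1$ stays bounded, which is exactly what keeps the final constant independent of $\delta$, hence of $t$.
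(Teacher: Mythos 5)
Your proposal is correct and follows the same overall strategy as the paper: bound $\varphi_t$ above via the volume-form comparison, and obtain the lower bound by Demailly-regularizing $\eta$ with an $\epsilon$-loss $-\epsilon\omega_0$ (this is where nefness of $K_X+[\omega_0]$ enters), running a maximum-principle argument on a shrinking time interval, and passing to the limit with constants independent of $\epsilon$. The one genuine difference is the device used for the lower bound. You apply Hamilton's trick to $m(t)=\min_X(\varphi_t-t\eta_\delta)$ and integrate the resulting $n\log(1-t(1+\delta))$ term, using $\int_0^{1/(1+\delta)}\log(1-s(1+\delta))\,ds=-(1+\delta)^{-1}$. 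The paper instead applies the space-time parabolic minimum principle to the Tian--Zhang-type quantity
\begin{equation*}
Q=\varphi+(t'-t)\,\partial_t\varphi+nt-t'\eta_{\epsilon},
\end{equation*}
for which $(\partial_t-\Delta)Q=\operatorname{tr}_{\omega_t}\!\bigl(t'\alpha+(1-t')\omega_0+\tfrac{t'}{2\pi}\ii\partial\bar\partial\eta_\epsilon\bigr)\ge 0$; the term $(t'-t)\partial_t\varphi$ absorbs exactly the contribution that produces your $\log(1-t)$ term, so no integration of a divergent logarithm is needed, and since $Q$ is lower semicontinuous with $Q\to+\infty$ on the singular set of $\eta_\epsilon$, its minimum is attained at $t=0$ at a smooth point, where it is bounded by uniform constants. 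What the paper's choice buys is precisely the avoidance of the one soft spot in your argument: Hamilton's trick requires $t\mapsto m(t)$ to be locally Lipschitz (so that $m(t)-m(0)=\int_0^t m'$), and because $\eta_\delta$ is unbounded below the minimizer $x_t$ can drift toward $\{\eta_\delta=-\infty\}$ as $t\to 0^+$, making the two-sided Lipschitz bound near $t=0$ non-obvious. This is repairable (e.g.\ work on $[\tau,t']$ and use $m(\tau)\ge\min_X\varphi_\tau-\tau\sup_X\eta_\delta\to 0$ as $\tau\to 0$, plus the a priori lower bound $\eta_\delta(x_t)\ge(\inf_X\varphi_t-C)/t$ at minimizers for $t\ge\tau$), so it is a technical caveat rather than a gap, but you should either supply this justification or switch to the space-time formulation.
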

\begin{proof}
By the standard estimate on volume forms, say for example \cite[Corollary 3.2.3]{song-weinkove}, there exists a constant $C>0$ such that for any $t\in [0,1)$, we have
\begin{equation}\label{volume upper bound}
    \omega_t^n\leq C\Omega
\end{equation}
    Then the upper bound of $\varphi$ follows from \eqref{eq-parabolic equation} and \eqref{volume upper bound}.  We just need to prove the lower bound of $\varphi$ in the following.  By Demailly's regularization theorem, we know that for any $\epsilon>0$, there exists functions $\eta_{\epsilon}$ with analytic singularities with 
    \begin{equation}
        \alpha+\frac{1}{2\pi}\ii\p\pp \eta_{\epsilon}\geq-\epsilon \omega_0, \text{ and } \eta_{\epsilon}\searrow \eta \text{ as $\epsilon\searrow 0$}.
    \end{equation}For any $t'\in [0,1)$, we consider the following quantity on $X\times [0,t']$,
    \begin{equation}
        Q=(\varphi+(t'-t)\p_t\varphi+nt)-t'\eta_{\epsilon}.
    \end{equation}Note that we have
    \begin{equation}
        (\partial_t - \Delta)\varphi=\partial_t\varphi+\tr_{\omega_t}((1-t)\omega_0+t\alpha)-n
    \end{equation}and 
\begin{equation}\label{eq-parabolic equation of time derivative}
 (\partial_t - \Delta)(\partial_t \varphi)
    = \operatorname{tr}_{\omega_t}(\alpha - \omega_0).
\end{equation}
    Then we obtain that on $X\times [0,t']$, whenever $Q$ is smooth, we have for $\epsilon\in (0, 1-t']$
    \begin{equation}
        (\partial_t-\Delta)Q=\tr_{\omega}(t'\alpha+(1-t')\omega_0+\frac{t'}{2\pi}\ii\partial\pp\eta_{\epsilon})\geq 0
    \end{equation}
Since $\eta_{\epsilon}$ has a uniform upper bound independent of $\epsilon$, we can apply the maximal principle to $Q$ to obtain that there exists a constant $C$ independent of $t'$ and $\eta_{\epsilon}$ such that $Q\geq - C$ on $X\times [0,t']$. Since $\p_t\varphi$ is bounded above \eqref{volume upper bound}, we have
   $ \varphi\geq t'\eta_{\epsilon}-C$. Then we let $\epsilon$ go to 0, we obtain there is a constant $C$ indepedent of $t'$ such that on $X\times [0,t']$
   \begin{equation}\label{eq-lower bound for t'}
       \varphi\geq t'\eta-C
   \end{equation}Since $t'\in [0,1)$ is arbitrary and the constant in \eqref{eq-lower bound for t'} is independent of $t'$, therefore we obtain the desired lower bound of $\varphi$ on $X\times [0,1)$.  
\end{proof}

\begin{remark}
We remark that the lower bound for \( \varphi_t \) in this lemma is likely not optimal.  
The expected optimal lower bound is that there exists a uniform constant \( C > 0 \) such that 
\begin{equation}\label{eq--optimal lower bound for C0}
    \frac{1}{2\pi}\varphi_t \geq V_{t\alpha + (1-t)\omega_0} - C,
\end{equation}
where 
\begin{equation}
    V_{t\alpha + (1-t)\omega_0}(x)
    := \sup \left\{
        \phi(x) \,\middle|\,
        \phi \text{ is } t\alpha + (1-t)\omega_0\text{-psh},\,
        \sup_X \phi \le 0
    \right\}
\end{equation}
denotes the extremal function associated with the smooth form \( t\alpha + (1-t)\omega_0 \).  
See \cite{BEGZ, fgs, GPTW, GPSS} for related results.  
To the best of the author’s knowledge, such an optimal lower bound~\eqref{eq--optimal lower bound for C0} remains unknown in the volume-collapsing finite-time singularity case of the Kähler–Ricci flow.
\end{remark}

\noindent \textit{Proof of Theorem \ref{thm-generalized schwarz}.}
Since $X$ is smooth, every irreducible component of the exceptional set of $\mu$ has codimension 1. Let $E$ denote the reduced effective exceptional divisor of $\mu:Z\rightarrow X$.

We first give the proof under the extra assumption that $\supp(E)\subset \supp(f^*D_Y)$. To proceed as in the standard proof of the Schwarz lemma, we need the following fact to handle the additional difficulties arising from the nontrivial bimeromorphic morphism \( \mu \).

\begin{itemize}
    \item[] Since $\mu$ is a finite composition of blow-ups along holomorphic submanifolds, by a local computation, we know that for a K\"ahler form $\omega_Z$ on $Z$ there exists a constant $C_0>0$ such that on $Z\setminus \Exc(\mu)$, we have 
   $ |s_E|^{C_0}\tr_{\mu^*(\omega_0)}(\omega_Z)\leq C_0 $.

% \item From the argument in \cite[Lemma 3.5]{demailly-paun}, we know that there exist positive rational numbers $\epsilon_i>0$ and a smooth form $\gamma\in c_1(\sum \epsilon_i E_i)$ such that 
%     $\mu^*(\omega_0)-\gamma $ is a K\"ahler form on $Z$.
\end{itemize}Then under the extra assumption $\supp E\subset \supp(f^*D_Y)$, we can derive that there exists 
a constant $C_1$ such that:
 for each fixed $t\in [0,1)$, 
 \begin{equation}\label{eq-deneracy control}
|f^*s_{D_Y}|^{C_1}_h\tr_{\mu^*(\omega_t)}(f^*(\theta_1))\leq C_t |s_E|^{C_0}\tr_{\mu^*(\omega_0)}(\omega_Z)\leq C_t
 \end{equation} is a globally bounded function on $X\setminus \Exc(\mu)=X\setminus\supp(E)$ .

By our assumption, we know that on $Z$, the class  the class $\mu^*([\omega_0+K_X])$ admits a smooth representative
\begin{equation}
    \alpha_Z:=f^*(\theta_1)+\frac{\ii}{2\pi}\Theta_{h}+\theta_2 \text{ with } \frac{\ii}{2\pi}\Theta_{h}\in c_1(D),
\end{equation}which on $Z\setminus \supp(D)$ is given by 
\begin{equation}
    f^*(\theta_1)-\frac{\ii}{2\pi}\p\pp \log |s_D|^2_{h}+\theta_2.
\end{equation} We can write 
\begin{equation}
    \mu^*(\omega_t)=(1-t)\mu^*(\omega_0)+t\alpha_Z+\frac{1}{2\pi}\ii\p\pp \phi_t. 
\end{equation}By Lemma \ref{lem--C0 estimate}, we know that there exists a constant $C$, independent of $t$ such that 
\begin{equation}\label{eq--estimate on the C0 and time derivative norm}
    t\log |s_D|^2_h-C\leq \phi_t\leq C \quad \text{ and }\quad \partial_t\phi\geq \mu^*\log(\frac{\omega_t^n}{\omega_0^n})-C.
\end{equation}
Then using our assumptions on the divisors $D$, we can prove a key improved lower bound of $\phi_t$.
Recall the assumption 
\begin{equation}
\mu^*(K+\omega_0)=f^*(\alpha_1)+\alpha_2+D, \quad    D+F=f^*(D_Y).
\end{equation}Note that since $\alpha_1$ is a K\"ahler class on $Y$, we know that for $0<\epsilon\ll 1$, $\alpha_1+\epsilon D_Y$ is still a K\"ahler class on $Y$. We fix a small $\epsilon_0\in \mathbb R_{>0}$ Then using $F$ is $\mu$-exceptional and applying the negativity lemma to $\mu:Z\rightarrow X$, using
\begin{equation}
    \mu^*(K+\omega_0)=f^*(\alpha_1+\epsilon_0 D_Y)+f^*((1-\epsilon_0)D_Y)-F,
\end{equation}we get that 
\begin{equation}\label{eq--D dominate D_Y}
D_{\epsilon_0}:=f^*((1-\epsilon_0)D_Y)-F\geq 0.
\end{equation}Then in particular, we know that there exists an $\alpha_Z$-psh function $\frac{\eta}{2\pi}$ such that 
\begin{equation}\label{eq-lower of extremal function}
    \eta\geq \log |s_{D_{\epsilon_0}}|^2-C.
\end{equation}

Note that by definition, $ D=\epsilon_0 f^*(D_Y)+f^*((1-\epsilon_0)D_Y)-F=\epsilon_0 (D+F)+D_{\epsilon_0}$, which by \eqref{eq--D dominate D_Y}, implies
\begin{equation}\label{eq--divisor relation}
   (1-\epsilon_0)D\geq D_{\epsilon_0} \text{ and } D\geq \epsilon_0 f^*(D_Y).
   \end{equation}Then combining \eqref{eq-lower of extremal function} and \eqref{eq--divisor relation}, we know that  
\begin{equation}
\eta\geq (1-\epsilon_0)\log |s_{D}|^2-C
\end{equation}Therefore by Lemma \ref{lem--C0 estimate}, we know that there exists a constant $C$ independent of $t\in [0,1)$ such that 
\begin{equation}\label{eq--improve lower bound of potential}
 \phi_t\geq    (1-\epsilon_0)t\log |s_D|_h^2-C 
\end{equation}

% By our assumption, there exists a hermitian metric $h_0$ on the line bundle defined by the $\mathbb Q$-divisor $D_0$ on $Z$ such that $ \mu^*(\omega_0)-\delta \frac{\ii}{2\pi}\Theta_{h_0}$ is a K\"ahler form on $Z$ for any $\delta\in (0,1]$. On $Z\setminus \supp (D_0)$, $\Theta_{h_0}=\ii\p\pp \log |s_0|^2_{h_0}$ and we have 
% \begin{equation}
%     \Delta \log |s_0|_{h_0}^2\geq -\tr_{\mu^*(\omega_t)}(\mu^*(\omega_0)).
% \end{equation}

In the following, we perform the computation on $Z\setminus \supp (D+E)$, where $\mu^*(\omega_t)$ is still a K\"ahler form for $t\in [0,1)$. Let 
\begin{equation}
    u=\tr_{\mu^*(\omega)}(f^*(\theta_1))
\end{equation}Then a standard computation \cite[Theorem 3.2.6]{song-weinkove} shows that 
\begin{equation}\label{eq-parbolic schawarz}
    (\partial_t-\Delta)\log u\leq C_1u.
\end{equation}Since $\theta_2$ is smooth and semi-positive, we have 
\begin{equation}\label{eq-contribution from the kahler potential}
    \Delta(\phi_t-t\log |s_D|^2_{h_D})\leq n-tu-(1-t)\mu^*(\tr_{\omega_t}\omega_0)
\end{equation}
% Moreover by \eqref{eq-parabolic equation of time derivative}, we know that 
% \begin{equation}\label{eq--impprtant positive term}
%     (\partial_t-\Delta)(t\partial_t\phi-\phi-nt)=-\mu^*(\tr_{\omega_t}(\omega_0))
% \end{equation}
Let 
\begin{equation}
    \begin{aligned}
        Q=&\log u-A(\phi_t-t\log |s_D|^2_{h_D})-An (1-t)(\log (1-t)-1),
    \end{aligned}
\end{equation}
where $A=\epsilon_0^{-2}(4C_1+10)$ is a fixed large constant. 
Then by \eqref{eq--estimate on the C0 and time derivative norm}, \eqref{eq-contribution from the kahler potential}, and the choice of $A$, we obtain that for $t\geq \frac{1}{2}$, there exists a constant $C$ such that 
\begin{equation}
\begin{aligned}
      (\partial_t-\Delta)Q\leq & (\partial_t-\Delta)\log u+A\Delta(\phi_t-t\log |s_D|_{h_D}^2)-A\p_t\phi_t+An\log (1-t)+C\\
      &\leq -u+C-A\partial_t\phi_t-A(1-t)\tr_{\mu^*(\omega_t)}\mu^*(\omega_0)+An \log (1-t)\\
      &\leq -u+C-A\mu^*\left(\left(\frac{((1-t)\omega_0)^n}{\omega_t^n}\right)^{\frac{1}{n}}-\log( \frac{\omega_t^n}{((1-t)\omega_0)^n})\right).
\end{aligned}
\end{equation} By the divisor relation $D\geq \epsilon_0 f^*(D_Y)$ in \eqref{eq--divisor relation} and the lower bound estimate \eqref{eq--improve lower bound of potential} on $\phi_t$, we know that 
\begin{equation}\label{eq--bounded above property}
    Q\leq \log (|f^*s_{D_Y}|^{C_1}u)-A(\phi_t-(1-\epsilon_0)\log |s_D|_h^2)+C \text{ is bounded above }
\end{equation}and  
its maximum can only be attained on \( Z \setminus \operatorname{Supp}(D + E) \).
 Note that the function $x\rightarrow x^{\frac{1}{n}}-\log x$ is bounded below for $x>0$. For any $t'\in (1/2,1)$, Applying the maximum principle to \( Q \) on \( Z \times [\tfrac{1}{2}, t'] \), we obtain that there exists a constant \( C \), independent of \( t' \), such that for any \( x \in Z \setminus \operatorname{Supp}(D + E) \),
\begin{equation}
    u(x) \leq C \, |s_D|_{h_D}^{-C}\leq C|f^*s_{D_Y}|_{h_{D_Y}}^{-C}.
\end{equation}
Letting \( t' \to 1^- \), we then conclude the proof of the desired estimate in Theorem~\ref{thm-generalized schwarz} under the extra assumption $\supp E\subset \supp(f^*(D_Y))$.

We now prove the general case.  
Consider the Zariski open set 
\( U = Z \setminus f^{-1}(\operatorname{Supp}(D_Y)) \) 
and its image under \( \mu \), denoted by \( \mu(U) \subset X \).  
For any \( p \in \mu(U) \) and any curve \( C \subset \mu^{-1}(p) \),  
the restriction of the class \( \mu^*(K_X + \omega_0) \) to \( C \) is trivial.  
Moreover, since \( \mu^{-1}(p) \cap U \neq \emptyset \), the image \( f(\mu^{-1}(p)) \) 
is not contained in \( \operatorname{Supp}(D_Y) \).  
Therefore, by the class identity \eqref{eq-class decomposition} 
and the assumption that \( \alpha_1 \) is Kähler, we conclude that  
\( f(C) \) must be a point contained in \( Y \setminus \operatorname{Supp}(D_Y) \).  
This implies that 
\begin{equation}
    \mu^{-1}(\mu(U)) = U.
\end{equation}
In particular, by the proper mapping theorem, 
\( \mu(U) = X \setminus \mu(f^{-1}(\operatorname{Supp}(D_Y))) \) 
is also Zariski open.  
Furthermore, by \cite[Lemma~1.15]{debarre}, the morphism \( f \) 
restricted to \( Z \setminus f^{-1}(\operatorname{Supp}(D_Y)) \) 
factors through \( \mu \); that is, the meromorphic map  
\( \pi : X \dashrightarrow Y \) is well defined on \( \mu(U) \).  
Hence, on \( U \), we can regard the function 
\[
    \operatorname{tr}_{\mu^*(\omega_t)}(f^*(\theta_1))
    = \mu^*\bigl(\operatorname{tr}_{\omega_t}(\pi^*(\theta_1))\bigr),
\]
which is smooth on \( U \).

By considering the graph of the meromorphic map \( \pi \) and applying the elimination of points of indeterminacy, 
we obtain a modification \( \nu_1 \colon X' \rightarrow X \), which is a finite composition of blow-ups along smooth submanifolds contained in the complement of \( \mu(U) \), 
such that there exists a holomorphic morphism \( \pi' \colon X' \rightarrow Y \).  
More precisely, we can write 
\[
\nu_1 = \psi_k \circ \cdots \circ \psi_1,
\]
where each \( \psi_i \colon X_i \rightarrow X_{i-1} \) is the blow-up along a smooth submanifold \( S_i \) satisfying 
\[ S_i \cap (\psi_{i-1} \circ \cdots \circ \psi_1)^{-1}(U) = \emptyset, \] where $X_0=X$ and $X'=X_k$.
Let \( E' \) denote the effective reduced exceptional divisor of \( \nu_1 \).  
Since \( \nu_1 \) is a finite composition of blow-ups along smooth centers, there exists a Kähler form \( \omega_{X'} \) on \( X' \) and a large constant \( C > 0 \) such that 
\begin{equation}\label{eq-first resolution estimate}
    \tr_{(\nu_1)^*(\omega_t)}\bigl((\pi')^*(\theta_1)\bigr)
    \leq \operatorname{tr}_{(\nu_1)^*(\omega_t)}(\omega_{X'})
    \leq C_t \, |s_{E'}|^{-C}.
\end{equation}

Next, we consider the induced meromorphic map from \( Z \) to \( X' \), which is well-defined on \( U = Z \setminus f^{-1}(\operatorname{Supp}(D_Y)) \) by the construction of $X'$
Applying the elimination of indeterminacies once more, we obtain a modification 
\( \nu_2 \colon Z' \rightarrow Z \), which is a finite composition of blow-ups along smooth submanifolds contained in the complement of \( U \), 
and a holomorphic morphism \( \mu' \colon Z' \rightarrow X' \).  
In summary, we have the following commutative diagram:
 \begin{equation}
         \begin{tikzcd}[column sep=large,row sep=large]
X' \arrow[d, "\nu_1"'] \arrow[dr, "\pi'"'] & Z' \arrow[l, "\mu'"'] \arrow[d, "f'"'] \arrow[r, "\nu_2"] & Z\arrow[dl, "f"]\\
X \arrow[r, "\pi"',dashed]& Y  &.
\end{tikzcd}
    \end{equation}Then by \eqref{eq-first resolution estimate}, we know that 
    \begin{equation}\label{eq--on Z'}
      \nu_2^*\left( \tr_{\mu^*\omega_t}(f^*(\theta_1))\right)= \tr_{(\nu_1\circ \mu')^*(\omega_t)}((f')^*(\theta_1))\leq (\mu')^*\left( \tr_{(\nu_1)^*(\omega_t)}\bigl((\pi')^*(\theta_1)\bigr)\right)\leq C_t |(\mu')^*s_{E'}|^{-C}.
    \end{equation}By the construction of $\nu_1$ and $\nu_2$, we know that 
    \begin{equation}\label{eq-divisor relation on Z'}
        \supp((\mu')^*(E'))\subset \supp((\nu_2)^*(f^*D_Y)) 
    \end{equation} Therefore combining \eqref{eq--on Z'} and \eqref{eq-divisor relation on Z'} and by choosing a larger constant $C$, we have
    \begin{equation}
         \nu_2^*\left( \tr_{\mu^*\omega_t}(f^*(\theta_1))\right)\leq C_t|s_{\nu_2^*(f^*{D_Y})})|^{-C},
    \end{equation}
     and therefore on $Z$, we obtain 
    \begin{equation}\label{eq-key global boundedness}
 \tr_{\mu^*\omega_t}(f^*(\theta_1))\leq C_t |s_{f^*D_Y}|^{-C}\leq C_t|s_D|^{-C},        
    \end{equation}where we have used \eqref{eq--divisor relation} for the second inequality.
  Once \eqref{eq-key global boundedness} is established—thereby verifying \eqref{eq-deneracy control} and \eqref{eq--bounded above property} in the general case—we can proceed as in the previous case to obtain the desired estimate.
\qed

\

\noindent\textit{Proof of Theorem \ref{thm--main}.}
We apply Theorem~\ref{thm-weak basepointfree} to obtain projective fibrations between compact Kähler manifolds 
\(
X \xlongleftarrow[]{\mu} Z \xlongrightarrow[]{f} Y,
\) where $\mu$ is a composition of blow-ups along smooth holomorphic submanifolds.
By this result, we know that if \( Y \) is a point, then \( K_X + \omega = 0 \) in \( H^2(X, \mathbb{R}) \), and hence \( (X, \omega_0) \) is Fano.  
Therefore, in the following, we may assume that 
\begin{equation}
    \dim Y > 0.
\end{equation}

Then we can apply Theorem \ref{thm-generalized schwarz} to get that there exist a Kähler form \( \theta \) on \( Y \) and an effective \( \mathbb{Q} \)-divisor \( D_Y \) such that 
\begin{equation}\label{eq-degenerate lower bound}
    \operatorname{tr}_{\mu^*(\omega_t)}(f^*(\theta)) \leq C\, |f^*s_{D_Y}|^{-C}.
\end{equation}
We can then choose an open set \( U \subset Y \), biholomorphic to the standard unit ball in \( \mathbb{C}^{\dim Y} \), such that 
\[
U\cap \supp(D_Y) = \emptyset.
\]
From~\eqref{eq-degenerate lower bound}, it follows that on \( f^{-1}(U) \) there exists a constant \( \delta > 0 \) such that for all \( t \in [0,1) \),
\begin{equation}
    \mu^*(\omega_t) \geq \delta\, f^*(\theta).
\end{equation}
It then follows (see, for example, \cite[Theorem~4.1]{song2014finite}) that this inequality yields a uniform lower bound for the diameter of \( \omega_t \).
\qed

\begin{remark}
Using Theorem~\ref{thm-weak basepointfree} and adapting the arguments from the proofs of Theorems~\ref{thm--main} and \ref{thm-generalized schwarz}, we also obtain a uniform positive lower bound for the diameter along the continuity path considered in \cite{la-tian}.
For a compact Kähler manifold $(X,\omega_0)$ with $\nef_X(\omega_0)=1$, it is shown in \cite{la-tian} that for every $t\in[0,1)$ there exists a solution $\omega_t$ to
\begin{equation}\label{eq:continuity-path}
    \omega_t = \omega_0 - t\ric(\omega_t).
\end{equation}

\begin{proposition}
Suppose $(X,\omega_0)$ is not Fano and $\nef_X(\omega_0)=1$. Let $\omega_t$ be the solution of \eqref{eq:continuity-path}. Then
\[
    \liminf_{t\nearrow 1}\operatorname{diam}(X,\omega_t) \;>\; 0.
\]
\end{proposition}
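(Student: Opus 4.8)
The plan is to run the proof of Theorem~\ref{thm--main} almost verbatim, with the parabolic estimates of this section replaced by elliptic ones adapted to the continuity equation~\eqref{eq:continuity-path}. First I would apply Theorem~\ref{thm-weak basepointfree} with $\alpha=\omega_0$; this is legitimate because $\omega_0$ is big and nef and, since $\nef_X(\omega_0)=1$, the class $K_X+\omega_0$ is nef, in particular pseudo-effective. One obtains projective fibrations $X\xlongleftarrow{\mu}Z\xlongrightarrow{f}Y$ between compact K\"ahler manifolds, with $\mu$ a composition of blow-ups along smooth submanifolds and $f$ having rationally connected general fibers, a K\"ahler class $\alpha_1$ on $Y$, an effective $\mathbb R$-divisor $D$, an effective $\mathbb Q$-divisor $D_Y$ on $Y$, and an effective $\mu$-exceptional $\mathbb R$-divisor $E$, such that $\mu^*(K_X+\omega_0)=f^*(\alpha_1)+D$ and $E+D=f^*(D_Y)$. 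If $\dim Y=0$, then $K_X+\omega_0=0$ in $H^2(X,\mathbb R)$, so $(X,\omega_0)$ is Fano, contrary to hypothesis; hence $\dim Y\ge 1$.

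The crucial step, which I expect to be the main obstacle, is the degenerate $C^0$ estimate for the continuity path, i.e., the elliptic analogue of Lemma~\ref{lem--C0 estimate}. Fixing a smooth $\alpha\in K_X+\omega_0$ and a volume form $\Omega$ with $\alpha-\omega_0=\frac{\ii}{2\pi}\p\pp\log\Omega$ and writing $\omega_t=(1-t)\omega_0+t\alpha+\frac{1}{2\pi}\ii\p\pp\varphi_t$, equation~\eqref{eq:continuity-path} becomes the elliptic complex Monge--Amp\`ere equation $\omega_t^n=e^{\varphi_t/t}\,\Omega$. The upper bound $\varphi_t\le C$ is immediate from $\omega_t^n\le C\Omega$; for the lower bound $\varphi_t\ge t\eta-C$ (valid for any $\frac{\eta}{2\pi}$ that is $\alpha$-psh) one wants to imitate the proof of Lemma~\ref{lem--C0 estimate} and of~\cite{zhang-zhou} by a maximum-principle argument for the one-parameter family $\{\varphi_t\}$. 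The subtlety is that differentiating $\omega_t^n=e^{\varphi_t/t}\Omega$ in $t$ produces extra zeroth-order terms in $\varphi_t$ and $\p_t\varphi_t$ that are absent in the K\"ahler--Ricci flow case, so the test function $Q=\varphi+(t'-t)\p_t\varphi+nt-t'\eta_\epsilon$ used there must be modified to absorb them, and one must verify that the resulting differential inequality still closes uniformly as $t\nearrow 1$; reparametrizing by $s=(1-t)/t$ turns the family into the standard degenerate Monge--Amp\`ere family $(s\omega_0+\alpha+\frac{1}{2\pi}\ii\p\pp\widetilde\varphi_s)^n=(1+s)^n e^{\widetilde\varphi_s}\Omega$, which should make this transparent. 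Such degenerate $C^0$ bounds along continuity paths are, in any event, essentially known.

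Granting this estimate, the remaining steps are routine. The elliptic analogue of Theorem~\ref{thm-generalized schwarz}, namely that $\tr_{\mu^*(\omega_t)}(f^*(\theta_1))\le C\,|s_{f^*D_Y}|^{-C}$ for all $t\in[0,1)$ with $\theta_1\in\alpha_1$ a K\"ahler form, goes through with no new ideas, since the proof of Theorem~\ref{thm-generalized schwarz} uses the degenerate $C^0$ estimate only as a black box and is otherwise purely elliptic and geometric: the reduction to the case $\supp E\subset\supp(f^*D_Y)$ via $\mu^{-1}(\mu(U))=U$ for $U=Z\setminus f^{-1}(\supp D_Y)$, elimination of indeterminacy, and~\cite[Lemma~1.15]{debarre} is identical; the Schwarz-type inequality $\Delta_{\mu^*\omega_t}\log u\le C_1 u$ for $u=\tr_{\mu^*(\omega_t)}(f^*(\theta_1))$ and the improved lower bound $\phi_t\ge(1-\epsilon_0)t\log|s_D|^2-C$ (coming from $D+F=f^*(D_Y)$, the negativity lemma for $\mu$, and the previous step) hold as before; and one applies the maximum principle to $Q=\log u-A(\phi_t-t\log|s_D|^2)$ for a large fixed $A$, now without the parabolic time-derivative correction, whose maximum is forced onto $Z\setminus\supp(D+E)$ by the globally bounded quantity $|s_{f^*D_Y}|^{C_1}u$. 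Finally, choosing a coordinate ball $U\subset Y$ biholomorphic to the unit ball in $\C^{\dim Y}$ with $U\cap\supp D_Y=\emptyset$, the estimate yields a uniform $\delta>0$ with $\mu^*(\omega_t)\ge\delta f^*(\theta_1)$ on $f^{-1}(U)$ for all $t\in[0,1)$, and~\cite[Theorem~4.1]{song2014finite} gives $\liminf_{t\nearrow 1}\diam(X,\omega_t)>0$.
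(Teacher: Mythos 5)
Your overall reduction---apply Theorem~\ref{thm-weak basepointfree}, prove a degenerate $C^0$ bound for the continuity path, run an elliptic Chern--Lu argument, and conclude via \cite[Theorem~4.1]{song2014finite}---is exactly the adaptation the paper intends, and you correctly identify the elliptic analogue of Lemma~\ref{lem--C0 estimate} as the crux. The problem is that the estimate you propose there, namely $\varphi_t\ge t\eta-C$ for the potential normalized by $\omega_t^n=e^{\varphi_t/t}\Omega$, is \emph{false} whenever $K_X+[\omega_0]$ is not big, which is precisely the collapsing case the Proposition must cover. Indeed, integrating that inequality would give, for $t\ge \tfrac12$,
\begin{equation*}
\int_X\omega_t^n=\int_Xe^{\varphi_t/t}\,\Omega\;\ge\;e^{-2C}\int_Xe^{\eta}\,\Omega\;>\;0,
\end{equation*}
since $\eta$ is quasi-psh and hence finite almost everywhere; but $\int_X\omega_t^n=\bigl((1-t)[\omega_0]+t(K_X+[\omega_0])\bigr)^n\to(K_X+[\omega_0])^n=0$. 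Concretely, on $X=E\times\mathbb{P}^1$ with $\omega_0=\omega_E+2\omega_{FS}$ one finds $\varphi_t=t\log(\omega_t^n/\Omega)\sim t\log(1-t)\to-\infty$ uniformly, while $\eta=0$ is an admissible $\alpha$-psh function. Your reparametrization $s=(1-t)/t$ makes the failure, not the proof, transparent: the limit equation $(\alpha+\tfrac{\ii}{2\pi}\p\pp\widetilde\varphi_0)^n=e^{\widetilde\varphi_0}\Omega$ has total mass $[\alpha]^n=0$ on the left and $\int_Xe^{\widetilde\varphi_0}\Omega$ on the right, so no uniform bound on $\widetilde\varphi_s$ can hold. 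The ``extra zeroth-order terms'' are therefore not a nuisance to be absorbed by tweaking the parabolic test function: the mechanism of Lemma~\ref{lem--C0 estimate} (propagating $(\p_t-\Delta)Q\ge0$ from $t=0$, with the correction $(t'-t)\p_t\varphi$ vanishing at $t=t'$) has no elliptic counterpart, because the continuity-path potential is essentially $t\,\p_t\varphi^{\mathrm{flow}}$, which is unbounded below.

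The gap is localized but real. The Schwarz-lemma half of your argument survives: it only uses $\Delta(\phi_t-t\log|s_D|^2_{h})\le n-tu$ (normalization-independent), the free Ricci bound $\ric(\omega_t)=\tfrac1t(\omega_0-\omega_t)\ge-2\omega_t$ for $t\ge\tfrac12$ in place of the parabolic corrections, and a two-sided bound on $\phi_t-t\log|s_D|^2_h$ for \emph{some} normalization of $\phi_t$ (adding a spatial constant shifts $Q$ harmlessly). So what you actually need is the lower bound for the sup-normalized potential, $\varphi_t-\sup_X\varphi_t\ge t\eta-C$, i.e.\ a uniform $L^\infty$ estimate relative to quasi-psh envelopes for a collapsing family of Monge--Amp\`ere equations with $\sup_X\varphi_t\to-\infty$. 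This does not follow from the naive elliptic maximum principle either: at the minimum of $(\varphi_t-\sup\varphi_t)-t\eta_\epsilon$ one only gets $\omega_t^n\gtrsim(1-t)^n\Omega$, which is short of the expected order $(1-t)^{\,n-\dim Y}$ by the factor $(1-t)^{\dim Y}$, so the inequality does not close. That renormalized estimate is plausibly within reach of the collapsing $L^\infty$ technology you allude to, but it is the genuine content of the Proposition; as written, your central lemma is false and the proposed derivation of it cannot work.
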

\end{remark}

% \[
% \begin{tikzcd}
% X & Z \arrow[l, "\pi_X"'] \arrow[d, "\pi_Y"] \\
%   & Y
% \end{tikzcd}
% \]

\bibliographystyle{alpha}
\bibliography{ref.bib}

\end{document}